\apptocmd{\thebibliography}{\setlength{\itemsep}{5pt}}{}{}
\newcommand{\cc}{\mathcal{C}}
\newcommand{\E}{\mathbb{E}}
\newcommand{\R}{\mathbb{R}}
\newcommand{\argmin}{\mbox{argmin}} 
\newcommand{\argmax}{\mbox{argmax}} 
\newcommand\scalemath[2]{\scalebox{#1}{\mbox{\ensuremath{\displaystyle #2}}}}
\theoremstyle{plain}
\newtheorem{theorem}{Theorem}[section]
\newtheorem{lemma}[theorem]{Lemma}
\newtheorem{remark}[theorem]{Remark}
\newtheorem{corollary}[theorem]{Corollary}
\theoremstyle{remark}
\newtheorem{assumption}{Assumption}
\renewcommand{\baselinestretch}{0.95}
\begin{document}

\begin{frontmatter}
\title{\small Skewed Bernstein--von Mises theorem \\ and skew--modal approximations}
\runtitle{Skewed Bernstein--von Mises theorem and skew--modal approximations}

\begin{aug}
%%%%%%%%%%%%%%%%%%%%%%%%%%%%%%%%%%%%%%%%%%%%%%%
%%%%%%%%%%%%%%%%%%%%%%%%%%%%%%%%%%%%%%%%%%%%%%%
\author[A]{\fnms{Daniele} \snm{Durante}\ead[label=e1,mark]{daniele.durante@unibocconi.it}},
\author[B]{\fnms{Francesco} \snm{Pozza}\thanksref{t2}\ead[label=e2,mark]{francesco.pozza2@unibocconi.it}}
\and
\author[A]{\fnms{Botond} \snm{Szabo}\thanksref{t1}\ead[label=e3,mark]{botond.szabo@unibocconi.it}}
%%%%%%%%%%%%%%%%%%%%%%%%%%%%%%%%%%%%%%%%%%%%%%
%%%%%%%%%%%%%%%%%%%%%%%%%%%%%%%%%%%%%%%%%%%%%%
\address[A]{Department  of Decision Sciences and Institute for Data Science and Analytics,
Bocconi University,\\
\printead{e1,e3}}

\thankstext{t1}{\scriptsize Co--funded by the European Union (ERC, BigBayesUQ, project number: 101041064). Views and opinions expressed are however those of the author(s) only and do not necessarily reflect those of the European Union or the European Research Council. Neither the European Union nor the granting authority can be held responsible for them.}
\vspace{-5pt}

\address[B]{Institute for Data Science and Analytics,
	Bocconi University,
\printead{e2}}

\thankstext{t2}{\scriptsize Funded by the European Union (ERC, PrSc-HDBayLe, project number: 101076564). Views and opinions expressed are however those of the author(s) only and do not necessarily reflect those of the European Union or the European Research Council. Neither the European Union nor the granting authority can be held responsible for them.}
\vspace{-5pt}

\end{aug}
\begin{abstract}
Gaussian deterministic approximations are routinely employed in Bayesian statistics to ease inference when the target posterior of direct interest is intractable. Although these approximations are justified, in asymptotic regimes,  by Bernstein--von Mises type results, in practice the expected Gaussian behavior may poorly represent the actual shape~of~the~target posterior, thereby affecting approximation accuracy. Motivated by these considerations, we derive an improved class of closed--form and valid deterministic approximations of posterior distributions which arise from a novel treatment of a third--order version of the Laplace method yielding approximations within a tractable family of skew--symmetric distributions. Under general assumptions which  also account for misspecified models and non--i.i.d. settings, this novel family of approximations~is~shown~to have a total variation distance from~the target posterior whose rate of convergence improves by at least one order of magnitude the one achieved by the Gaussian from the classical Bernstein--von Mises theorem. Specializing such a general result to the case of regular parametric models shows that the same improvement in approximation accuracy can be also established for polynomially bounded posterior functionals, including moments.  Unlike other higher--order approximations based on, e.g., Edgeworth expansions, our results prove that it is possible to derive closed--form and valid densities~which are expected to provide, in practice, a more accurate, yet similarly--tractable, alternative to Gaussian approximations of the target posterior of direct interest, while inheriting its limiting  frequentist properties. We strengthen  these arguments by developing a practical skew--modal approximation for both joint and marginal posteriors which achieves the same~theoretical guarantees of its theoretical counterpart by replacing the unknown model parameters with the corresponding maximum a posteriori estimate. Simulation studies and real--data applications confirm that our theoretical results closely match the remarkable empirical performance observed in practice, even in finite, possibly small, sample regimes. 
\end{abstract}

\begin{keyword}[class=MSC2020]
\kwd[Primary ]{62F15}
\kwd[; secondary ]{62F03}
\kwd{62E17}
\end{keyword}

\begin{keyword}
\kwd{\scriptsize Bernstein--von Mises theorem}
\kwd{\scriptsize Deterministic approximation}
\kwd{\scriptsize Skew--symmetric distribution}
\end{keyword}

\end{frontmatter}

%%%%%%%%%%%%%%%%%%%%%%%%%%%%%%%%%%%%%%%%%%%%%%
%%%%%%%%%%%%%%%%%%%%%%%%%%%%%%%%%%%%%%%%%%%%%%

\section{Introduction} \label{sec_1}
Modern Bayesian statistics relies extensively on deterministic approximations  to facilitate inference  in those challenging, yet routine, situations when the target posterior of direct interest is intractable \citep[e.g.,][]{tierney1986accurate,minka2013expectation,inla_paper,blei2017variational}.  A natural option to enforce the desired tractability is to constrain~the approximating distribution within a suitable family which facilitates the evaluation of functionals of interest for inference. To this end, both classical solutions, such as the approximation of posterior distributions induced by the Laplace method \citep[e.g.,][Ch.\ 4.4]{bishop2006pattern}, and state--of--the--art strategies, including, for example, Gaussian variational Bayes \citep{opper2009variational} and standard implementations of expectation--propagation \citep{minka2013expectation}, employ Gaussian approximations. These further appear, either as the final solution or as a recurring building--block, also within several routinely--implemented alternatives, such as mean--field variational Bayes \citep{blei2017variational} and integrated nested Laplace approximation (\textsc{inla})  \citep{inla_paper}. See also \citet{wang2013variational}, \citet{Chopin_2017}, \citet{durante2019conditionally}, \citet{ray2022variational} and \citet{vehtari2020expectation}, among others, for further  examples illustrating the relevance of Gaussian approximations.

From a theoretical perspective, the choice of the Gaussian family to approximate the posterior distribution is justified, in asymptotic regimes, by Bernstein--von Mises type results. In its classical formulation  \citep{laplace1810theorie,bernstein1917,vonM1931,le1953some,le1990asymptotics,van2000asymptotic}, the Bernstein--von Mises theorem~states~that, in sufficiently regular parametric models, the posterior distribution converges in total variation (\textsc{tv}) distance, with probability tending to one under the law of the data, to a Gaussian distribution. The mean of such a limiting Gaussian is a known function of the true data--generative parameter, or any efficient estimator of this quantity, such as the maximum likelihood estimator, while the variance is the inverse of the Fisher information. Extensions of the Bernstein--von Mises theorem to more complex settings have also been made in recent years. Relevant contributions along these directions include, among others, generalizations to high-dimensional regimes \citep{boucheron2009bernstein,spok2021}, along with in--depth treatments of misspecified  \citep{kleijn2012bernstein}  and irregular   \citep{bochkina2014bernstein} models. Semiparametric settings have also been addressed \citep{bickel2012semiparametric,castillo2015bernstein}.  In the nonparametric context,  Bernstein--von Mises type results do not hold in general, but the asymptotic Gaussianity can be still proved for weak Sobolev spaces via a multiscale analysis \citep{castillo2014bernstein}. 

Besides providing  crucial advances in the understanding of the limiting frequentist properties of posterior distributions, the above Bernstein--von Mises type results  have also substantial implications in the design and in the theoretical justification of practical Gaussian deterministic approximations for intractable posterior distributions from, e.g., the Laplace method  \citep{kasprzak2022good}, variational Bayes (\textsc{vb}) \citep{wang2019frequentist} and expectation--propagation (\textsc{ep})  \citep{dehaene2018expectation}. Such a direction has led to important results. Nonetheless, in practical situations the Gaussian approximation may lack  the required flexibility to closely match the actual shape of the posterior distribution of direct interest, thereby undermining accuracy when inference is based on such  an approximation. In fact, as illustrated via two representative real--data clinical applications~in Sections~\ref{sec_322}--\ref{sec_hd_logistic}, the error in posterior mean estimation of the Gaussian approximation supported by the classical Bernstein--von Mises theorem is non--negligible not only in  a study with low sample size $n=27$ and $d=3$ parameters, but also in a higher--dimensional application with $n=333$ and $d \approx n/2.5$. Both regimes often occur in routine implementations. The results in Sections~\ref{sec_322}--\ref{sec_hd_logistic}  further clarify that the issues encountered by the Gaussian approximation are mainly due to the inability of capturing the non--negligible skewness often displayed by the actual posterior in these settings. Such an asymmetric shape is inherent to routinely--studied posterior distributions. For example,  \citet{durante2019conjugate},  \citet{fasano2022class} and \citet{anceschi2022bayesian} have recently proved that, under a broad class of priors which  includes multivariate normals, the posterior distribution induced by probit, multinomial probit and tobit models belongs to a  skewed generalization of  the Gaussian distribution known as unified skew--normal (\textsc{sun}) \citep{arellano2006unification}. More generally, available extensions of Gaussian deterministic approximations which account, either explicitly or implicitly, for skewness \citep[see e.g.,][]{inla_paper,challis2012affine,fasanoscalable} have shown evidence of improved empirical accuracy relative to their Gaussian counterparts. Nonetheless, these approximations are often model--specific and general  justifications relying on Bernstein--von Mises type results are not available yet. In fact, in--depth theory and methods for skewed approximations are either lacking or are tailored to specific  models and priors \citep[][]{fasanoscalable}.

In this article, we cover the aforementioned gaps by deriving an improved class~of~closed--form, valid and theoretically--supported skewed approximations of generic posterior distributions. Such a class
 arises from a novel treatment of a higher--order version of the Laplace method which replaces the third--order term with a suitable univariate cumulative distribution function (cdf) satisfying mild regularity conditions. As clarified in Section~\ref{sec_21}, this new perspective yields tractable approximations that crucially belong to the broad and known skew--symmetric family \citep[see e.g.,][]{ma2004flexible}. More specifically, these approximations can be readily~obtained by direct perturbation of the density~of a multivariate Gaussian via a suitably--defined univariate cdf evaluated at a cubic function of the parameter. This implies that the proposed class of approximations admits straightforward i.i.d.\ sampling schemes facilitating direct Monte Carlo evaluation of any functional of interest for posterior inference. These~are~crucial advancements relative to other higher--order studies relying on Edgeworth or other types of representations \citep[see e.g.,][and  references therein]{johnson1970asymptotic,weng2010bayesian,kolassa2020validity}, which consider arbitrarily truncated versions of infinite~expansions~that~do not necessarily correspond to closed--form valid densities, even after normalization --- e.g., the density approximation is not guaranteed to be non--negative \citep[e.g.,][Remark 11]{kolassa2020validity}. This undermines the methodological and practical impact of current higher--order results which still fail to provide a natural, valid and general alternative to Gaussian deterministic approximations that can be readily employed in practice. In contrast, our novel results prove that a previously--unexplored treatment of specific~higher--order expansions can actually yield valid, practical and theoretically--supported approximations, thereby opening the avenues to extend such a perspective to orders even higher than the third one; see also our final discussion in Section~\ref{sec_4}.

Section~\ref{sec_general_thm} clarifies that the proposed class of skew--symmetric approximations has also strong theoretical support in terms of accuracy improvements relative to its Gaussian counterpart.  More specifically, in Theorem~\ref{thm:1} we prove that the newly--proposed class of skew--symmetric approximations has a total variation distance from the target posterior distribution whose rate of convergence improves by at least one order of magnitude the one attained by the Gaussian from the classical Bernstein--von Mises theorem. Crucially, this result~is~derived under general assumptions which account for both misspecified models and non--i.i.d. settings. This yields an important refinement of standard Bernstein--von Mises type results clarifying that it is possible to derive closed--form and valid~densities which are expected to provide, in practice, a more accurate, yet similarly--tractable, alternative to Gaussian approximations of the target posterior of direct interest, while inheriting its limiting frequentist properties. In Section~\ref{sec_fixd_thm} these general results are further specialized to, possibly non--i.i.d. and misspecified, regular parametric models,  where  $n \to \infty$ and the dimension~$d$ of the parametric space is fixed. Under such a practically--relevant setting, we show that~the proposed~skew--symmetric approximation can be explicitly derived as a function of the log--prior and log--likelihood derivatives. Moreover, we prove that  by~replacing the classical Gaussian~approximation from the Bernstein--von Mises theorem with such a newly--derived alternative yields a remarkable improvement~in the rates of order $\sqrt{n}$, up to a poly--log term. This gain is shown to hold not only for the \textsc{tv} distance from the target posterior, but also for the error in approximating polynomially bounded posterior functionals (e.g., moments).

The methodological impact of the theory in Section~\ref{sec_2} is further strengthened in Section~\ref{sec_3} through the development of a readily--applicable plug--in~version for the proposed class of skew--symmetric approximations derived in Section~\ref{sec_21}. This is obtained by replacing the unknown true data--generating parameter in the theoretical construction with the corresponding maximum a posteriori estimate, or any other efficient estimator. The resulting solution~is named skew--modal approximation and, under mild conditions, is shown to achieve the same improved rates~of~its~theoretical counterpart, both in terms of the \textsc{tv} distance from the target posterior distribution and with respect to the approximation  error for polynomially bounded posterior functionals. In such a practically--relevant setting, we further refine the theoretical analysis through the derivation of non--asymptotic bounds for the \textsc{tv} distance among the skew--modal approximation and the target posterior. These bounds are guaranteed to~vanish also when the dimension $d$ grows with $n$, as long as  $d \ll n^{1/3}$ up to~a poly--log~term. Interestingly, this condition is related to those required either for $d$ \citep[see e.g.,][]{panov2015finite} or for the notion of effective dimension \smash{$\tilde{d}$}  \citep[see][]{spok2021,spokoiny2023inexact} in recent high--dimensional studies of the Gaussian Laplace approximation. However, unlike these studies,  the  bounds we derive vanish with $n$,~up~to a poly--log term, rather than $\sqrt{n}$, for~any given dimensions. These advancements enable also~the~derivation of a novel lower bound for the \textsc{tv} distance among the Gaussian Laplace approximation and the target posterior, which is shown to still vanish with~$\sqrt{n}$.  This result strengthens~the proposed skew--modal solution whose associated upper bound vanishes with $n$, up to a poly--log term. When the focus of inference is not on the joint posterior but rather on its marginals,~we~further derive  in Section~\ref{sec_marginal}  accurate skew--modal approximations for such marginals that inherit the same theory guarantees while scaling~up~computation.

The superior empirical performance of the newly--proposed class of skew--symmetric approximations and the practical consequences of our theoretical results on the  improved rates are~illustrated through both simulation studies and two real--data applications in Sections~\ref{sec_24} and~\ref{sec_32}. All these numerical analyses demonstrate that the remarkable~theoretical improvements encoded within the rates we derive closely match the empirical behavior observed in practice even in finite, possibly small, sample regimes. This translates into noticeable empirical accuracy gains relative to the classical Gaussian--modal approximation from the Laplace method. Even more, in the real--data application the proposed skew–modal approximation also displays a highly competitive performance with respect to more sophisticated state--of--the--art Gaussian and non--Gaussian approximations from mean--field \textsc{vb} and expectation--propagation  (\textsc{ep}) \citep{minka2013expectation,blei2017variational,Chopin_2017,durante2019conditionally,fasanoscalable}. 

As discussed in the concluding remarks in Section~\ref{sec_4}, the above results stimulate future advancements aimed at refining the accuracy of other Gaussian approximations from, e.g.,  \textsc{vb}  and \textsc{ep},  via the inclusion of skewness. To this end, our contribution provides the foundations to achieve this goal, and suggests that a natural and tractable class where~to seek these improved approximations would be still the skew--symmetric family. Extensions to~higher--order expansions beyond the third term are also discussed as directions of future research. Finally, notice that although the non--asymptotic bounds we derive for the skew--modal approximation in Section~\ref{sec_3} yield refined theoretical results that can be readily proved for the general skew--symmetric class in Section~\ref{sec_2}, the  practical consequences of non--asymptotic bounds and the associated constants is an ongoing area of research even for basic Gaussian approximations \citep[see e.g.,][and references therein]{kasprzak2022good}. In this sense, it shall be emphasized that, in our case, even the asymptotic theoretical support encoded in the rates we derive finds empirical evidence in  the finite--sample studies~considered in Sections~\ref{sec_24} and~\ref{sec_32}.

%%%%%%%%%%%%%%%%%%%%%%%%%%%%%%%%%%%%%%%%%%%%%%
%%%%%%%%%%%%%%%%%%%%%%%%%%%%%%%%%%%%%%%%%%%%%%

\subsection{Notation} \label{sec_11}
We denote with $\{X_i\}_{i = 1}^n$, $n \in \mathbbm{N}$, a sequence of random variables with unknown true distribution $P_0^n$. Moreover, let $\mathcal{P}_{\Theta} = \left\{ P_\theta^{n}, \theta \in \Theta \right\}$, with $\Theta\subseteq \mathbb{R}^d$, be a parametric family of distributions. In the following, we assume that there exists a common $\sigma$--finite measure $\mu^{n}$ which dominates $P_0^n$ as well as all measures $P_\theta^{n}$, and we denote by~$p_0^n$~and~$p_{\theta}^{n}$ the corresponding density functions. The Kullback--Leibler projection \smash{$P_{\theta_*}^n$ of $P_0^n$ on $\mathcal{P}_{\Theta}$} is defined as \smash{$P_{\theta_*}^n = \argmin_{P_\theta^n \in \mathcal{P}_{\Theta} }\textsc{kl}(P_{0}^{n} \| P_{\theta}^{n})$} where 
$\textsc{kl}(P_{0}^{n} \| P_{\theta}^{n})$ denotes the Kullback--Leibler divergence between  $P_{0}^{n}$ and $P_{\theta}^{n}$. The log--likelihood of the, possibly misspecified, model is $ \ell(\theta ) = \ell(\theta, X^{n} ) =  \log p_{\theta}^{n}\left(X^{n}\right)$. Prior and posterior distributions are denoted by $\Pi(\cdot)$ and $\Pi_n(\cdot)$, whereas the corresponding densities are indicated with $\pi(\cdot)$ and $\pi_n(\cdot)$, respectively.
 
As mentioned in Section~\ref{sec_1}, our results rely on higher--order expansions and derivatives.~To this end,  we characterize operations among vectors, matrices and arrays in a  compact manner by adopting the index notation along with the Einstein's summation convention  \citep[see e.g.,][p. 335]{salvan1997}. More specifically, the inner product $Z^{\intercal}a$ between the  generic random vector $Z \in \mathbb{R}^d,$ with components $Z_s$ for $s = 1,\dots, d$, and the vector of coefficients $ a \in \mathbb{R}^d$  having elements $a_s$ for $s = 1,\dots,d,$ is expressed as $ a_s Z_s$, with the sum being implicit in the repetition of the indexes.  Similarly, if $B$ is a $d \times d$ matrix with entries $b_{st}$ for $s,t = 1,\dots,d,$ the quadratic form  $Z^\intercal B Z$ is expressed as $ b_{st} Z_{s} Z_{t}.$ The generalization to operations involving arrays with higher dimensions is obtained under the same reasoning.
   	
	Leveraging the above notation, the score vector evaluated at $\theta_*$ is defined as
	 \begin{eqnarray*}
	\ell^{(1)}_{\theta_{*}} = [ \ell^{(1)}_s(\theta) ]_{\mid \theta = \theta_*} \, = \, \left[ (\partial/\partial \theta_{s})\ell(\theta) \right]_{\mid \theta = \theta_*}\in\mathbb{R}^{d}, 
			 \end{eqnarray*}
	whereas, the second, third and fourth order derivatives of $\ell(\theta)$, still evaluated at $\theta_*$, are 
	 \begin{eqnarray*}
	\begin{split}
	&	\ell^{(2)}_{\theta_{*}} = [ \ell^{(2)}_{st}(\theta)]_{\mid \theta = \theta_*} \,  \,  = \, [ \partial/(\partial \theta_{s} \partial \theta_t )\ell(\theta) ]_{\mid \theta = \theta_*}\in\mathbb{R}^{d\times d},\\
	&	\ell^{(3)}_{\theta_{*}} = [ \ell^{(3)}_{stl}(\theta) ]_{\mid \theta = \theta_*} \,  \,  = \, [ \partial/(\partial \theta_{s} \partial \theta_t \partial \theta_l  )\ell(\theta) ]_{\mid \theta = \theta_*}\in\mathbb{R}^{d\times d\times d}, \\
	&	\ell^{(4)}_{\theta_{*}} = [ \ell^{(4)}_{stlk}(\theta) ]_{\mid \theta = \theta_*} \, = \, [ \partial/(\partial \theta_{s} \partial \theta_t \partial \theta_l \partial \theta_{k} )\ell(\theta) ]_{\mid \theta = \theta_*}\in\mathbb{R}^{d\times d \times d \times d }, 
	\end{split}
		 \end{eqnarray*}
		\vspace{5pt} 
\noindent	where all the indexes in the above definitions and in the subsequent ones go from $1$ to $d$. The observed and expected Fisher information are denoted by \smash{$J_{\theta_*} = [j_{st}] = -[\ell^{(2)}_{\theta_{*},st}] \in\mathbb{R}^{d\times d}$} and \smash{$I_{\theta_*} = [i_{st}] = [\mathbb{E}_{0}^{n} j_{st}]\in\mathbb{R}^{d\times d}$}, respectively, where $\mathbb{E}_{0}^{n}$ is the expectation with respect to $P_{0}^{n}$. In addition, 
	 \begin{eqnarray*}
	 	\begin{aligned}
	 		\log \pi^{(1)}_{\theta_*}  = &[ 	\log \pi (\theta)_{s}^{(1)} ]_{\mid \theta = \theta_*}  = \, [ \partial/(\partial \theta_{s} )\log \pi (\theta) ]_{\mid \theta = \theta_*}\in\mathbb{R}^{d},  \\
	 	 \log \pi^{(2)}_{\theta_*}  = &[ 	\log \pi (\theta)_{st}^{(2)} ]_{\mid \theta = \theta_*}  = \, [ \partial/(\partial \theta_{s} \partial \theta_{t} )\log \pi (\theta) ]_{\mid \theta = \theta_*}\in\mathbb{R}^{d \times d}, \\
	 	\end{aligned}
	 \end{eqnarray*}
	 represent the first two derivatives of the log--prior density, evaluated at $\theta_*.$
	 
	The Euclidean norm of a vector $a \in \mathbb{R}^d$ is denoted by $\|a\|$, whereas, for a generic $d \times d$ matrix $B$, the notation $|B|$ indicates its determinant, while $\lambda_{\textsc{min}}(B)$ and $\lambda_{\textsc{max}}(B)$~its~minimum and maximum eigenvalue, respectively. Furthermore, $u \wedge v$ and $u \vee v$ correspond to $\min\{u, v\}$ and $\max\{u, v\}$. For two positive sequences $u_n,v_n$ we employ $u_n\lesssim v_n$ if there exists a universal positive constant $C$ such that $u_n\leq C v_n$. When $u_n\lesssim v_n$ and $v_n\lesssim u_n$ are satisfied simultaneously, we write $u_n\asymp v_n$.  

%%%%%%%%%%%%%%%%%%%%%%%%%%%%%%%%%%%%%%%%%%%%%%
%%%%%%%%%%%%%%%%%%%%%%%%%%%%%%%%%%%%%%%%%%%%%%

\section{A skewed Bernstein--von Mises theorem} \label{sec_2}
This section presents our first important contribution. In particular, Section~\ref{sec_21} shows that, for Bayesian models satisfying a refined version of the local asymptotic normality (\textsc{lan}) condition \citep[see e.g.,][]{van2000asymptotic,kleijn2012bernstein}, a previously--unexplored treatment of a third--order version of~the Laplace method can yield a novel, closed--form and valid approximation of~the posterior distribution. Crucially, this approximation is further shown to belong to the tractable skew--symmetric (\textsc{sks}) family  \citep[e.g.,][]{ma2004flexible}.   Focusing on this newly--derived class of \textsc{sks} approximations, we then prove in Section~\ref{sec_general_thm} that the $n$--indexed sequence of \textsc{tv} distances~between such a class and the target posterior has a rate which improves by~at least one order of magnitude the one achieved  under the classical Bernstein--von Mises theorem based on Gaussian approximations. Such a skewed Bernstein--von Mises type result is proved under general assumptions which account for misspecified models in non--i.i.d. settings. Section~\ref{sec_fixd_thm}  then specializes this result to the practically--relevant context of regular parametric models with $n \to \infty$ and fixed $d$. In this setting we prove that the improvement~in rates over the Bernstein--von Mises theorem is by a multiplicative factor of order $\sqrt{n}$, up to a poly--log term. This result is shown to hold not only for the \textsc{tv} distance from the posterior, but also for the  error in approximating polynomially bounded posterior functionals.

Let $\delta_n \to 0$ be a generic norming rate governing the posterior contraction toward $\theta_*$.
Consistent with standard Bernstein--von Mises  type theory \citep[see e.g.,][]{van2000asymptotic,kleijn2012bernstein}, consider the re--parametrization $h = \delta_n^{-1}(\theta-\theta_*) \in \mathbb{R}^d$. Moreover, let $F(\cdot) \, : \, \R \to [0{,}1]$ denote any~univariate cumulative distribution function which satisfies $F(-x) = 1-F(x)$ and~$F(x)\ {=}  1/2 + \eta x + O(x^2), \, x \to 0$, for some $ \eta \in \R$. Then, the class of \textsc{sks} approximating  densities $p_{\textsc{sks}}^n(h)$ we derive and study has the  general form
	\begin{eqnarray}\label{limiting:distribution}
		p_{\textsc{sks}}^n( h) \, = \, 2 \phi_d \left( h ; \xi, \Omega \right)w(h-\xi) = \, 2 \phi_d \left( h ; \xi, \Omega \right)F( \alpha_{\eta}(h-\xi)),
	\end{eqnarray}
    with $P_{\textsc{sks}}^n(S) = \int_S p_{\textsc{sks}}^n(h) dh$ denoting the associated cumulative distribution function.~In~\eqref{limiting:distribution}, $\phi_d \left( \cdot ; \xi, \Omega \right)$ is the density of a $d$--variate Gaussian  with mean vector $\xi$ and covariance matrix $\Omega$, while the function $ w(h-\xi) \in (0,1)$ is responsible for inducing~skewness, and takes the form $w(h-\xi) = F \left( \alpha_{\eta}(h-\xi) \right)$, where  $\alpha_{\eta}({\cdot})  : \R^d \to \R$ denotes a third order odd polynomial depending on the parameter that regulates the expansion of $F(\cdot)$, i.e., $\eta$.  

Crucially, Equation \eqref{limiting:distribution} not only ensures that $p_{\textsc{sks}}^n(h)$ is a valid and closed--form density, but also that such a density belongs to the tractable and known skew--symmetric class  \citep{azzalini2003distributions,ma2004flexible}. This follows directly by the definition of \textsc{sks} densities, provided that  $\alpha_{\eta}(\cdot)$ is an odd function, and $\phi_d \left( \cdot ; \xi, \Omega \right)$ is symmetric about $\xi$  \citep[see e.g.,][Proposition 1]{azzalini2003distributions}. Therefore, in contrast to available higher--order studies of posterior distributions based on Edgeworth~or~other type of expansions \citep[see e.g.,][]{johnson1970asymptotic,weng2010bayesian,kolassa2020validity}, our~theoretical and methodological results focus on a family of closed--form and valid approximating densities which are essentially as tractable as multivariate Gaussians,~both~in~terms~of~evaluation of the corresponding density, and i.i.d.\ sampling.~More~specifically,~let~$z_0 \in  \mathbb{R}^d$~and $z_1 \in [0,1]$ denote samples from a $d$--variate Gaussian having density $\phi_d( z_0 ; 0, \Omega )$ and from a~uniform with support $[0,1]$, respectively. Then, adapting results in, e.g., \citet{wang2004skew}, a sample from the \textsc{sks} distribution with density  as in \eqref{limiting:distribution} can be readily~obtained~via
\begin{eqnarray*}
\xi+\mbox{sgn}( F ( \alpha_{\eta}(z_0) )-z_1)z_0.
\end{eqnarray*}
Therefore, sampling from the proposed \textsc{sks} approximation~simply reduces to drawing values from a $d$–variate Gaussian and then changing or not the sign of the sampled value via a straightforward perturbation scheme.

As clarified within Sections \ref{sec_fixd_thm} and \ref{sec_3}, the general \textsc{sks} approximation in \eqref{limiting:distribution}~is~not~only interesting from a theoretical perspective, but has also relevant methodological consequences and direct applicability. This is because, when specializing  the general theory~in Section~\ref{sec_general_thm} to, possibly misspecified and non--i.i.d., regular parametric models where $n \to \infty$, $d$ is fixed and $\delta_n^{-1} = \sqrt{n}$, we can show that the quantities defining $p_{\textsc{sks}}^n(h)$ in \eqref{limiting:distribution} can be expressed as closed--form functions of the log--prior and log--likelihood derivatives  at $\theta_*$. In particular, let $\smash{u_t=(\ell^{(1)}_{\theta_{*}} +  \log \pi^{(1)}_{\theta_{*}} )_t/\sqrt{n}}$ for $t=1, \ldots, d$, then, as clarified in Section \ref{sec_fixd_thm}, we have
	\begin{eqnarray}
\scalemath{1}{	\begin{split}
		\qquad &\xi=\smash{[n (J^{-1}_{\theta_*})_{st}u_t]}, \qquad \Omega^{-1} = [ j_{st}/n-(\xi_{l}\ell^{(3)}_{\theta_*,stl}/n)/\sqrt{n} ],\\
	&\alpha_{\eta}(h-\xi) ={\{}1/(12 \eta \sqrt{n})\}  (\ell^{(3)}_{\theta_*,stl}/n) \{(h-\xi)_{s}(h-\xi)_{t}(h-\xi)_{l}+3(h-\xi)_{s} \xi_{t} \xi_{l}\}.
	\end{split}}
	\label{eq_as_1}
	\end{eqnarray}
Interestingly, in this case, the first factor on the right hand side of \eqref{limiting:distribution} closely resembles~the limiting Gaussian density with mean vector \smash{$\ell^{(1)}_{\theta_{*}}/\sqrt{n}$} and covariance matrix \smash{$(I_{\theta_{*}}/n)^{-1}$} from the classical Bernstein--von Mises theorem which, however, fails to incorporate skewness. To this end, the symmetric component in \eqref{limiting:distribution} is perturbed via a skewness--inducing mechanism regulated by $w(h-\xi)$ to obtain a valid  asymmetric density with tractable normalizing constant. As shown in Section~\ref{sec_3}, this solution admits a directly--applicable practical counterpart, which can be obtained by replacing $F(\cdot)$ and $\theta_*$  in \eqref{limiting:distribution}--\eqref{eq_as_1}, with routine--use univariate cdfs such as, e.g., $\Phi(\cdot)$, and with the maximum a posteriori estimate \smash{$\hat{\theta}$} of $\theta$, respectively. This results in a practical and novel skew--modal approximation that can be shown to have the same theoretical guarantees of improved accuracy of its theoretical counterpart.

%%%%%%%%%%%%%%%%%%%%%%%%%%%%%%%%%%%%%%%%%%%%%%
%%%%%%%%%%%%%%%%%%%%%%%%%%%%%%%%%%%%%%%%%%%%%%

\subsection{Derivation of the skew--symmetric approximating density} \label{sec_21}
Prior to~stating and proving within Section~\ref{sec_general_thm}  the general skewed Bernstein--von Mises theorem which supports the proposed class of \textsc{sks} approximations, let us focus on providing a constructive derivation of~such~a class via a novel treatment of a third--order extension of the Laplace method. To simplify  notation, we consider the simple univariate case with $d=1$ and $\delta_n^{-1}= \sqrt{n}$.  The extension of these derivations to $d>1$ and to the general setting we consider in Theorem~\ref{thm:1} follow as a direct adaptation of the reasoning for the~univariate case; see Sections \ref{sec_general_thm}--\ref{sec_fixd_thm}. 

As a first step towards deriving the approximating density  $p_{\textsc{sks}}^n(h)$, notice that the posterior for $h = \sqrt{n}(\theta-\theta_*) $ can be expressed as
		\begin{eqnarray} \label{un:post:1d}
		\pi_n(h)  \propto \frac{p^{n}_{\theta_* + h/\sqrt{n}}}{p^{n}_{\theta_*}}(X^{n}) \frac{\pi ( \theta_* + h/\sqrt{n})}{\pi(\theta_*)},
	\end{eqnarray}
	since $p^{n}_{\theta_*}(X^{n})$ and $\pi(\theta_*)$ do not depend on $h$, and $\theta=\theta_*+h/\sqrt{n}$.
	
	Under suitable regularity conditions discussed in Sections~\ref{sec_general_thm}--\ref{sec_fixd_thm} below, the third--order Taylor's expansion for the logarithm of the likelihood ratio in Equation~\eqref{un:post:1d} is
	\begin{eqnarray} \label{lr:1d}
			\log  \frac{p^{n}_{\theta_* + h/\sqrt{n}}}{p^{n}_{\theta_*}}(X^{n})  = \frac{\ell^{(1)}_{\theta_*}}{\sqrt{n}}  h - \frac{1}{2} \frac{j_{\theta_*}}{n}  h^2 + \frac{1}{6 \sqrt{n}}\frac{\ell^{(3)}_{\theta_*}}{n} h^3 + O_{P^{n}_{0}}\big( n^{-1} \big),
	\end{eqnarray}
whereas the first order Taylor's expansion of the log--prior ratio is
		\begin{eqnarray}\label{taylor:prior1}
			\log \frac{\pi ( \theta_* + h/\sqrt{n})}{\pi(\theta_*)} = \frac{\log \pi^{(1)}_{\theta_*}}{\sqrt{n}} h + O\big( n^{-1} \big).
	\end{eqnarray}  
	Combining~\eqref{lr:1d} and \eqref{taylor:prior1} it is possible to reformulate the right--hand--side of Equation~\eqref{un:post:1d} as
		\begin{eqnarray} \label{eq:help00}
	\quad \frac{p^{n}_{\theta_* + h/\sqrt{n}}}{p^{n}_{\theta_*}}(X^{n}) \frac{\pi ( \theta_* + h/\sqrt{n})}{\pi(\theta_*)}= \exp \Big(  u h - \frac{1}{2} \frac{j_{\theta_*}}{n}  h^2 + \frac{1}{6 \sqrt{n}}\frac{\ell^{(3)}_{\theta_*}}{n} h^3 \Big) + O_{P^{n}_{0}}(n^{-1}),	
	\end{eqnarray}
	where $u = ( \ell^{(1)}_{\theta_*} +\log \pi^{(1)}_{\theta_*})/\sqrt{n}.$ 
	
	Notice that, up to a multiplicative constant, the Gaussian density arising from the classical Bernstein--von Mises theorem can be obtained by neglecting all terms in (\ref{lr:1d})--(\ref{taylor:prior1}) which converge to zero in probability. These correspond to the contribution of the prior, the difference between the observed and expected Fisher information, and the term associated to the third--order log--likelihood derivative. Maintaining these quantities would surely yield improved accuracy, but it is unclear whether a valid and similarly--tractable density can be still identified. In fact, current solutions  \citep[e.g.,][]{johnson1970asymptotic} consider approximations based on the sum among a Gaussian density and additional terms in the Taylor's expansion. However, as for related alternatives arising from Edgeworth--type  expansions \citep[e.g.,][]{weng2010bayesian,kolassa2020validity}, there is no guarantee that such constructions provide valid densities. 
	
	As a first key contribution we prove below that a valid and tractable approximating density can be, in fact, derived from the above expansions and belongs to the \textsc{sks} class. To this end, let  $\omega=1/v$ with \smash{$ v =   j_{\theta_*}/n-(\xi \ell^{(3)}_{\theta_*}/n)/\sqrt{n}$} and \smash{$\xi = n(j_{\theta_*})^{-1} u $}, and note that, by replacing $h^3$  in the right hand side of Equation~\eqref{eq:help00} with $(h-\xi+\xi)^3$, the exponential term in   \eqref{eq:help00} can be rewritten as proportional to
	\begin{eqnarray}\label{eq:help00_0}
		\phi(h;\xi , \omega ) \exp	( \{1/(6 \sqrt{n})\}(\ell^{(3)}_{\theta_*}/n)\big\{  (h-\xi)^3 + 3(h-\xi)\xi^2\big \} ).
	\end{eqnarray}
	At this point, recall that,  for $x \to 0$, we can write  $ \exp(x) = 1 + x + O(x^2)$ and $ 2F(x) =~1 +2 \eta x + O(x^2)$, for some $\eta \in \mathbb{R}$, where $F(\cdot)$ is the univariate cumulative distribution function introduced in Equation~\eqref{limiting:distribution}. Therefore, leveraging the similarity among these  two expansions and the fact that the exponent in Equation~\eqref{eq:help00_0} is an odd function of $(h-\xi)$ about $0$, of order \smash{$O_{P_0^{n}}(n^{-1/2})$}, it follows that \eqref{eq:help00_0}  is equal to
	\begin{eqnarray*} 2 \phi(h; \xi , \omega) F( \alpha_{\eta}(h-\xi) ) + O_{P_0^{n}}(n^{-1}),\end{eqnarray*}
	with $ \alpha_{\eta}(h-\xi)$ defined as in Equation~\eqref{eq_as_1}, for a univariate setting. The above expression~coincides with the univariate case of the skew--symmetric density in~ Equation~\eqref{limiting:distribution}, up~to~an~additive $O_{P^{n}_{0}}(n^{-1})$ term. The direct extension of the above derivations to the multivariate case  provides the general form~of  \smash{$p_{\textsc{sks}}^n(h)$} in  Equation \eqref{limiting:distribution} with parameters as in  \eqref{eq_as_1}. Section~\ref{sec_general_thm} further extends, and supports theoretically, such a construction in more general settings.

%%%%%%%%%%%%%%%%%%%%%%%%%%%%%%%%%%%%%%%%%%%%%%
%%%%%%%%%%%%%%%%%%%%%%%%%%%%%%%%%%%%%%%%%%%%%%

\subsection{The general theorem} \label{sec_general_thm}
The core message of Section~\ref{sec_21} is that a suitable treatment of the cubic terms in the Taylor expansion of the log--posterior can yield a higher--order,~yet valid, \textsc{sks} approximating density. This solution is expected to improve the accuracy of the classical second--order Gaussian approximation, while avoiding known issues of  polynomial approximations, such as regions with negative mass \citep[e.g.,][p.~154]{mccullagh2018tensor}.

In this section, we provide theoretical support to the above arguments. More specifically, we clarify that the  derivations in Section~\ref{sec_21}  can be applied generally to obtain  \textsc{sks} approximations in~a~variety of different settings, provided that the posterior contraction is~governed by a generic norming rate $\delta_n \to 0$, and that some general, yet reasonable, regularity conditions are met.   In particular, Theorem \ref{thm:1} requires Assumptions \ref{cond:uni}--\ref{cond:concentration} below. For convenience, let us introduce the notation \smash{$M_n = \sqrt{c_0 \log \delta_n^{-1}}$}, with $c_0>0$ a constant to be specified~later.

\begin{assumption} \label{cond:uni}
 The Kullback--Leibler projection $\theta_* \in \Theta$ is unique.  
\end{assumption}
\vspace{-17pt}
\begin{assumption} \label{cond:LAN}  
	There exists a sequence of $d$-dimensional random vectors $\Delta^n_{\theta_*} {=} \ O_{P_{0}^n}({1})$, a sequence of $d \times d$ random matrices $V_{\theta_*}^n = [v_{st}^n]$ with $v_{st}^n = O_{P_0^n}(1)$, and also a sequence of $d \times d \times d$  random arrays \smash{$a^{(3),n}_{\theta_*}  = [a^{(3),n}_{\theta_*,stl}]$} with  \smash{$a^{(3),n}_{\theta_*,stl} = O_{P_0^n}(1)$}, so that
		\vspace{-2pt}   
			\begin{eqnarray*}
		\log \frac{p_{\theta_* + \delta_n h }^n}{p_{\theta_*}^n}(X^n) - h_s v_{st}^n \Delta_{\theta_*, t}^n + \frac{1}{2}  v_{st}^n h_s h_{t} - \frac{\delta_n}{6} a^{(3),n}_{\theta_*,stl} h_s h_t h_l  = r_{n,1}(h),
		\vspace{-5pt}
	\end{eqnarray*}
    with $r_{n,1} := \sup_{h \in K_n}\left| r_{n,1}(h) \right| = O_{P_0^n}(\delta_n^2 M_n^{c_1})$, for some positive constant $c_1>0$, where~$K_n = \{ \| \theta - \theta_* \| \leq M_n \delta_n \}$. In addition, there are two positive constants $\eta^*_1$ and $\eta^*_2$ such that the event $A_{n,0} = \{ \lambda_{\textsc{min}}( V_{\theta_*}^n ) > \eta^*_1 \} \cap \{ \lambda_{\textsc{max}}( V_{\theta_*}^n ) < \eta^*_2\},$ holds with $P_{0}^n A_{n,0}  = 1-o(1)$. 
\end{assumption} 
\vspace{-17pt}
\begin{assumption} \label{cond:prior} 
	There exists a $d$--dimensional vector $\log \pi^{(1)} =[\log \pi^{(1)}_s]$ such that 
	\vspace{-2pt}
		\begin{eqnarray*}  \log \pi(\theta_* + \delta_n h )/\pi(\theta_*) - \delta_n h_s \log \pi^{(1)}_s = r_{n,2}(h), 
				\vspace{-9pt}
			\end{eqnarray*}
		with $\log \pi^{(1)}_s = O(1) $ and $r_{n,2} := \sup_{h \in K_n}\left| r_{n,2}(h) \right| = O(\delta_n^2 M_n^{c_2})$ for some constant $c_2>0$. 
\end{assumption}
\vspace{-17pt}
\begin{assumption} \label{cond:concentration} 
	It holds $\lim_{\delta_n \to 0}	P_0^n \{ \Pi_n( \| \theta - \theta_*\| > M_n \delta_n ) < \delta_n^2 \} = 1.$
\end{assumption}

Albeit general, Assumptions \ref{cond:uni}--\ref{cond:concentration} provide reasonable conditions that extend those commonly considered to derive classical Bernstein--von Mises type results. Moreover, as clarified in Section~\ref{sec_fixd_thm}, these assumptions directly translate, under regular parametric models, into natural and explicit conditions on the behavior of the log--likelihood and log--prior.~In particular, Assumption \ref{cond:uni} is mild and can be found, for example, in \citet{kleijn2012bernstein}. Together with Assumption \ref{cond:concentration}, it guarantees that asymptotically the posterior distribution concentrates in the region where the two expansions in Assumptions~\ref{cond:LAN}--\ref{cond:prior} hold with~negligible remainders. Notice that, Assumptions \ref{cond:LAN} and~\ref{cond:concentration} naturally extend those found in~general theoretical studies of Bernstein--von Mises  type results  \citep[e.g.,][]{kleijn2012bernstein} to a third--order construction, which further requires quantification of rates. Assumption~\ref{cond:prior} provides, instead, an additional condition relative to those found in the classical theory. This is because, unlike for  second--order Gaussian approximations, the log--prior enters the  \textsc{sks}  construction through its first derivative; see Section~\ref{sec_21}. To this~end, Assumption~\ref{cond:prior} imposes suitable and natural smoothness conditions on the prior. Interestingly, such a need to include a careful  study for the behavior of~the prior density is also useful in forming~the bases to naturally extend our proofs and theory  to the general high--dimensional settings considered in \citet{spok2021} and \citet{spokoiny2023inexact} for the classical Gaussian Laplace approximation, where the prior effect has a critical role in controlling the behavior of the third and fourth--order components of the log--posterior. Motivated by these results,~Section~\ref{sec_3}~further derives  non--asymptotic bounds  for the practical skew--modal~approximation, which are guaranteed to vanish  also when $d$ grows with $n$, as long as this growth in the dimension is such that $d \ll n^{1/3}$ up to~a poly--log~term.  See Remark \ref{rem_31} for a detailed discussion and~Section~\ref{sec_4} for future research directions in high dimensions motivated by such a result.

Under the above assumptions, Theorem~\ref{thm:1} below provides theoretical support to the proposed \textsc{sks} approximation by stating a novel skewed Bernstein--von Mises  type result. More specifically, this result establishes that in general contexts, covering also misspecified models and non--i.i.d. settings, it is possible to derive a  \textsc{sks}  approximation, with density as in~\eqref{limiting:distribution}, whose \textsc{tv} distance from the target posterior has a rate which improves by at least one order of magnitude the one achieved~by the classical Gaussian counterpart from the Bernstein--von Mises theorem. By approaching the target posterior at a~provably--faster rate,~the proposed solution  is therefore expected to provide, in practice, a more accurate alternative to Gaussian approximations of the target posterior, while inheriting the corresponding limiting frequentist~properties. To this end, Theorem~\ref{thm:1} shall not be interpreted as a theoretical result aimed at providing novel or alternative frequentist support to Bayesian inference. Rather,~it represents an important refinement of the  classical Bernstein--von Mises theorem which guides and supports the derivation of improved deterministic approximations to be used in practice as tractable, yet accurate, alternatives to the intractable posterior of direct interest. Figure~\ref{fig:tv_intui} provides a graphical intuition for such an argument.  The practical impact of these results is  illustrated in  the empirical studies within Sections~\ref{sec_24} and~\ref{sec_32}. Such studies clarify that the theoretical improvements encoded in the rates we derive directly translate into the remarkable accuracy gains of the proposed class of \textsc{sks}  approximations observed in finite--sample studies.

\begin{theorem} \label{thm:1}
	Let $h = \delta_n^{-1}(\theta - \theta_*),$ and define  $M_n = \sqrt{c_0 \log \delta_n^{-1}}$, with $c_0>0$. Then, under Assumptions  \ref{cond:uni}--\ref{cond:concentration}, if $\theta_*$ is an inner point of $\Theta$, it holds
	\begin{eqnarray} \label{thm1:out}
		\| \Pi_n(\cdot) - P_{\textsc{sks}}^n(\cdot) \|_{\textsc{tv}} = O_{P_{0}^n}(M_n^{c_3} \delta_n^2),
	\end{eqnarray}
	where $c_3>0$, and $P_{\textsc{sks}}^n(\cdot)$ is the cdf of the \textsc{sks} density $p_{\textsc{sks}}^n(h)$ in  \eqref{limiting:distribution} with parameters 
\begin{eqnarray*}
	\begin{split}
		&\xi = \Delta_{\theta_*}^n + \delta_n (V^n_{\theta_*})^{-1} \log \pi^{(1)}, \qquad \Omega^{-1} = [v_{st}^n -  \delta_n \smash{a^{(3),n}_{\theta_*,stl}} \xi_l],\\
	&\alpha_{\eta}(h-\xi) = (\delta_n/12 \eta)\smash{a^{(3),n}_{\theta_*, stl}} \{  (h-\xi)_s   (h-\xi)_t  (h-\xi)_l + 3  (h-\xi)_s\xi_t \xi_l    \}.
	\end{split}
	\end{eqnarray*}	
The function $F(\cdot)$ entering the definition of $p_{\textsc{sks}}^n(h)$  in  \eqref{limiting:distribution}  is any univariate cdf  which satisfies $F(-x) = 1 - F(x) $ and $F(x) = 1/2 + \eta x + O(x^2)$, for some $\eta \in \mathbb{R}$, when $x \to 0$.
\end{theorem}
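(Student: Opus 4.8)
The plan is to transfer the work already done in Section~\ref{sec_21} into a rigorous bound, using Assumptions~\ref{cond:uni}--\ref{cond:concentration} to control every remainder term uniformly on the shrinking neighborhood $K_n = \{\|\theta - \theta_*\| \le M_n \delta_n\}$ and the concentration assumption to discard the complement. First I would split the total variation distance as
\begin{eqnarray*}
 \| \Pi_n(\cdot) - P_{\textsc{sks}}^n(\cdot) \|_{\textsc{tv}} \le \tfrac12\!\int_{h \in \delta_n^{-1}(K_n - \theta_*)}\!\! | \pi_n(h) - p_{\textsc{sks}}^n(h) |\, dh + \Pi_n(K_n^c) + P_{\textsc{sks}}^n\big( (\delta_n^{-1}(K_n - \theta_*))^c \big).
\end{eqnarray*}
The middle term is $O_{P_0^n}(\delta_n^2)$ by Assumption~\ref{cond:concentration}. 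The last term is a Gaussian-type tail of the \textsc{sks} density past radius $M_n$; since $w \in (0,1)$ the \textsc{sks} density is dominated by $2\phi_d(\cdot;\xi,\Omega)$, whose eigenvalue bounds come from the $A_{n,0}$ event and the rank-one perturbation $\delta_n a^{(3),n}_{\theta_*} \xi$ being $o(1)$, so a standard sub-Gaussian tail bound makes this term $O(\delta_n^{c_0/2 \cdot \text{const}})$, negligible for $c_0$ large enough (this is exactly where the choice of $c_0$ and the $M_n^{c_3}$ polylog factor get pinned down).

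\textbf{The local comparison.} For the main integral, write on $K_n$ the unnormalized posterior as $\pi_n(h) \propto \exp(h_s v_{st}^n \Delta^n_{\theta_*,t} - \tfrac12 v_{st}^n h_s h_t + \tfrac{\delta_n}{6} a^{(3),n}_{\theta_*,stl} h_s h_t h_l + \delta_n h_s \log\pi^{(1)}_s + r_n(h))$ where $r_n := r_{n,1} + r_{n,2} = O_{P_0^n}(\delta_n^2 M_n^{c_1 \vee c_2})$ uniformly. Completing the square in the quadratic-plus-linear part with the shifted center $\xi$ as defined, and rewriting the cubic $a^{(3),n}_{\theta_*,stl} h_s h_t h_l$ via $(h-\xi+\xi)$ exactly as in \eqref{eq:help00_0}, I absorb one piece of the cubic into the Gaussian precision (giving $\Omega^{-1}$) and am left with the genuinely skew-inducing term $\tfrac{\delta_n}{6} a^{(3),n}_{\theta_*,stl}\{(h-\xi)_s(h-\xi)_t(h-\xi)_l + 3(h-\xi)_s\xi_t\xi_l\}$, which is an odd function of $h - \xi$ of uniform size $O_{P_0^n}(\delta_n M_n^3)$ on $K_n$. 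Then I would invoke the twin expansions $\exp(x) = 1 + x + O(x^2)$ and $2F(x) = 1 + 2\eta x + O(x^2)$ with $x = \alpha_\eta(h-\xi)$: since both share the same linear coefficient after the $1/(12\eta)$ normalization, their difference is $O(x^2) = O_{P_0^n}(\delta_n^2 M_n^6)$, so
\begin{eqnarray*}
 \exp\!\Big(\tfrac{\delta_n}{6} a^{(3),n}_{\theta_*,stl}\{\cdots\}\Big) = 2 F(\alpha_\eta(h-\xi)) + O_{P_0^n}(\delta_n^2 M_n^6)
\end{eqnarray*}
uniformly on $K_n$. Multiplying through by $\phi_d(h;\xi,\Omega)$, integrating against the Gaussian (whose polynomial moments up to the relevant order are $O(M_n^{\text{const}})$ on $K_n$ and $O(1)$ globally), and combining with the remainder $e^{r_n(h)} = 1 + O_{P_0^n}(\delta_n^2 M_n^{c_1 \vee c_2})$, gives that the unnormalized posterior and the unnormalized \textsc{sks} density agree up to a uniform multiplicative $1 + O_{P_0^n}(\delta_n^2 M_n^{c_3'})$ factor on $K_n$.

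\textbf{Normalization and assembly.} Finally I would handle normalizing constants: the \textsc{sks} density integrates to one by the skew-symmetric identity (Proposition~1 of \citet{azzalini2003distributions}, using that $\alpha_\eta$ is odd and $\phi_d(\cdot;\xi,\Omega)$ symmetric about $\xi$), while the posterior normalizing constant is $\int_{K_n}(\text{unnorm.}) + \Pi$-mass on $K_n^c$ contributions; the ratio of normalizers is therefore also $1 + O_{P_0^n}(\delta_n^2 M_n^{c_3'})$ once the tail pieces are controlled as above. Converting the uniform multiplicative control on densities over $K_n$ into an $L^1$ (hence \textsc{tv}) bound costs only a factor $\int_{K_n} p_{\textsc{sks}}^n \le 1$, yielding \eqref{thm1:out} with $c_3$ the maximum of the polylog exponents accumulated from $c_1, c_2$, the Gaussian moment bounds, and the tail choice of $c_0$. \textbf{The main obstacle} I anticipate is bookkeeping the interaction between the shrinking-neighborhood polynomial factors $M_n^k$ and the $\delta_n$ powers so that the dominant error is genuinely $O_{P_0^n}(\delta_n^2 M_n^{c_3})$ rather than something larger: one must check that every cross term created by completing the square, re-expanding the cubic around $\xi$, and comparing $\exp$ with $2F$ — in particular the $3(h-\xi)_s\xi_t\xi_l$ piece, which carries factors of $\xi = O_{P_0^n}(1)$ but not $o(1)$ — still lands at order $\delta_n^2$ times a power of $M_n$, and that the perturbed precision $\Omega^{-1}$ remains uniformly positive definite on the relevant event so that all Gaussian tail and moment estimates are legitimate.
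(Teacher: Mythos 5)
Your proposal is correct and follows essentially the same route as the paper's proof: the same triangle--inequality split into a local piece on $K_n$ plus a posterior tail term (Assumption~\ref{cond:concentration}) and an \textsc{sks}/Gaussian tail term (eigenvalue event plus sub--Gaussian tails), with your local comparison being exactly the content of the paper's Lemma~\ref{lemma:1} — uniform $O_{P_0^n}(\delta_n^2 M_n^{c_3})$ control of the log--ratio after completing the square at $\xi$, re--centring the cubic, and matching $\exp(x)$ with $2F(x)$ through their common linear term. The only difference is the bookkeeping of normalizing constants: you compare unnormalized densities and then argue the ratio of normalizers is $1+O_{P_0^n}(\delta_n^2 M_n^{c_3})$, whereas the paper works with the $K_n$--restricted and renormalized densities and cancels the posterior's normalizer via a Jensen/double--integral identity combined with $|1-e^{x}|\le |x|+e^{\beta x}x^2/2$; the two devices are equivalent here and lead to the same rate.
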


\begin{remark} \label{re1}
Under related conditions and a simpler proof, it is possible to show that the order of convergence for the Bernstein--von Mises theorem based on  limiting~Gaussians is $O_{P_{0}^{n}}(M_n^{c_4} \delta_n)$, for some $c_4>0$. Thus, Theorem~\ref{thm:1} guarantees that by~relying on suitably--derived \textsc{sks} approximations with density as in \eqref{limiting:distribution}, it is possible to improve~the~rates of the classical Bernstein--von Mises result by a multiplicative factor of $\delta_n$. This follows directly from the fact that the \textsc{sks} approximation is able to include terms~of~order $\delta_n$ that are present in the Taylor expansion of the log--posterior but are neglected in~the Gaussian limit. This allows an improved redistribution of the mass in the high posterior probability region, thereby yielding increased accuracy in characterizing the shape of the target posterior. As~illustrated in Sections \ref{sec_24} and \ref{sec_32}, this correction yields remarkable accuracy improvements in practice. 
\end{remark}

\begin{remark} \label{reF}
Theorem~\ref{thm:1} holds for a broad class of \textsc{sks} approximating distributions as long as the univariate cdf $F(\cdot)$ which enters the skewing factor in~\eqref{limiting:distribution} satisfies  $F(-x) = 1 - F(x) $ and $F(x) = 1/2 + \eta x + O(x^2)$ for some $\eta \in \mathbb{R}$ when $x \to 0$. These conditions are mild and  add flexibility in the selection of  $F(\cdot)$. Relevant and practical examples of possibile choices for $F(\cdot)$ are the cdf of the standard Gaussian distribution, $\Phi(\cdot)$, and the inverse logit function, $g(\cdot) = \exp(\cdot)/\{1 + \exp(\cdot)\} $. Both satisfy $F(-x) = 1 - F(x) $,~and the associated Taylor expansions are \smash{$\Phi(x) = 1/2 + x/\sqrt{2 \pi} + O(x^3)$} and \smash{$g(x) = 1/2 + x/4 + O(x^3)$}, respectively, for $x \to 0$. Interestingly, when $F(\cdot)=\Phi(\cdot)$, the resulting skew--symmetric approximation belongs to the well--studied sub--class of generalized skew--normal (\textsc{gsn}) distributions \citep{ma2004flexible}, which provide the most natural extension of multivariate skew--normals \citep{azzalini2003distributions} to more flexible skew--symmetric representations. Due to this,  Sections~\ref{sec_24} and~\ref{sec_32} focus on assessing the empirical performance of such a noticeable example.
\end{remark} 

Before entering the details of the proof of Theorem~\ref{thm:1}, let us highlight an interesting~aspect regarding the interplay between skew--symmetric and Gaussian approximations that~can be deduced from our theoretical studies. In particular, notice that Theorem~\ref{thm:1} states results in terms of approximation of the whole posterior distribution under the \textsc{tv} distance. This implies, as a direct consequence of the definition of such a distance, that the same rates hold also for the absolute error in approximating the posterior expectation of any bounded function. As shown later in Corollary \ref{corol:1}, such an improvement can also be proved, under mild additional conditions, for the approximation of the posterior expectation of any~function bounded by a polynomial (e.g., posterior moments). According to Remark~\ref{re1} these rates cannot be achieved in general under a Gaussian approximation. Nonetheless, as stated in  Lemma \ref{lemma:distr:inv}, for some specific functionals the classical Gaussian approximation can actually attain the same rates of its skewed version. This result follows from the skew--symmetric distributional invariance with respect to even functions \citep{wang2004skew}. Such a property implies that the \textsc{sks} approximation $2 \phi_d \left( h ; \xi, \Omega \right)F( \alpha_{\eta}(h-\xi))$ and its Gaussian component $\phi_d \left( h ; \xi, \Omega \right)$ yield the same level of accuracy in estimating the posterior~expected value of functions that are symmetric with respect to the location parameter $\xi$. Thus, our results  provide also a novel explanation of the phenomenon observed in \citet{spok2021} and \citet{spokoiny2023inexact}, where the quality of the Gaussian approximation, within high--dimensional models, increases by an order of magnitude when evaluated on Borel sets which are centrally symmetric with respect to the location $\xi$ \citep[e.g.,][Theorem 3.4]{spokoiny2023inexact}. Nonetheless, as clarified in Theorem~\ref{thm:1} and Remark~\ref{re1}, Gaussian approximations remain still unable to attain the same rates of the \textsc{sks} counterparts in the estimation of generic functionals. Relevant examples are highest posterior density intervals which are often studied in practice and will be non--symmetric by definition whenever the posterior is skewed. 
\vspace{-2pt}

\begin{lemma} \label{lemma:distr:inv}
	Let $2 \phi_d ( h ; \xi, \Omega )F( \alpha_{\eta}(h-\xi))$ with $ \xi \in \mathbb{R}^d$ and $ \Omega \in\mathbb{R}^d \times \mathbb{R}^d$, be a skew-symmetric approximation of $\pi_n(h)$ and let $G\,: \, \mathbb{R}^d \to \mathbb{R}$ be an even function. If both $\int G(h -\xi ) \pi_n(h) d h$  and  $\int G(h - \xi ) 2 \phi_d(h; \xi , \Omega) F( \alpha_{\eta}(h-\xi)) d h$ are finite,  it holds  
	\begin{eqnarray*}
		 {\int} G(h - \xi )\{ \pi_n(h) - 2 \phi_d(h; \xi , \Omega) F( \alpha_{\eta}(h-\xi)) \}dh  \ {=}  {\int} G(h - \xi )\{ \pi_n(h) - \phi_d(h; \xi ,  \Omega) \}d h .
	\end{eqnarray*} 
\end{lemma}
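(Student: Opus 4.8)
The key observation is that the skewing factor $F(\alpha_\eta(h-\xi))$ has a special structure: since $\alpha_\eta(\cdot)$ is an \emph{odd} polynomial and $F(-x)=1-F(x)$, the function $w(h-\xi)=F(\alpha_\eta(h-\xi))$ satisfies $w(h-\xi)+w(-(h-\xi))=1$. Equivalently, writing $k(h) := 2\phi_d(h;\xi,\Omega)F(\alpha_\eta(h-\xi)) - \phi_d(h;\xi,\Omega)$, the function $k$ is \emph{odd about $\xi$}: $k(2\xi-h) = -k(h)$, because $\phi_d(\cdot;\xi,\Omega)$ is symmetric about $\xi$ and $2F(\alpha_\eta(h-\xi))-1$ is odd in $(h-\xi)$.

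The plan is to substitute $u = h - \xi$ and exploit this antisymmetry. After the change of variables, the left-hand side minus the right-hand side of the claimed identity equals
\begin{eqnarray*}
\int G(u)\,\bigl[ 2\phi_d(u+\xi;\xi,\Omega)F(\alpha_\eta(u)) - \phi_d(u+\xi;\xi,\Omega)\bigr]\,du
= \int G(u)\,\tilde k(u)\,du,
\end{eqnarray*}
where $\tilde k(u) := \phi_d(u;0,\Omega)\bigl(2F(\alpha_\eta(u))-1\bigr)$ is an odd function of $u$, since $\phi_d(\cdot;0,\Omega)$ is even and $2F(\alpha_\eta(u))-1$ is odd (as $\alpha_\eta$ is odd and $F(-x)-1/2 = -(F(x)-1/2)$). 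Meanwhile $G$ is even by hypothesis, so the integrand $G(u)\tilde k(u)$ is an odd function of $u$, and hence integrates to zero over $\mathbb{R}^d$ — provided the integral is absolutely convergent so that the cancellation of the $+u$ and $-u$ contributions is legitimate. Note that the $\pi_n(h)$ terms cancel identically between the two sides before we even invoke antisymmetry, so only the Gaussian/skew-symmetric part needs handling.

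Concretely I would proceed as follows. First, subtract the right-hand side from the left-hand side; the two $\int G(h-\xi)\pi_n(h)\,dh$ terms cancel, leaving exactly $\int G(h-\xi)\{2\phi_d(h;\xi,\Omega)F(\alpha_\eta(h-\xi)) - \phi_d(h;\xi,\Omega)\}\,dh$. Second, change variables $u = h-\xi$ to recenter, obtaining $\int G(u)\phi_d(u;0,\Omega)\{2F(\alpha_\eta(u))-1\}\,du$. Third, verify the parity: $G$ even, $\phi_d(\cdot;0,\Omega)$ even, $\alpha_\eta$ odd (a third-order odd polynomial, as specified after \eqref{limiting:distribution}), and $2F(\cdot)-1$ odd by the assumption $F(-x)=1-F(x)$; hence the product $G(u)\phi_d(u;0,\Omega)\{2F(\alpha_\eta(u))-1\}$ is odd in $u$. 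Fourth, invoke absolute integrability to split $\mathbb{R}^d = \{u\} \cup \{-u\}$ and conclude the integral vanishes. This is essentially the skew-symmetric distributional-invariance argument of \citet{wang2004skew} applied to the difference between the skew-symmetric density and its Gaussian core.

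The only genuine subtlety — and the step I would be most careful about — is the integrability justification needed to pass from ``odd integrand'' to ``zero integral.'' The hypotheses state that $\int G(h-\xi)\pi_n(h)\,dh$ and $\int G(h-\xi)\,2\phi_d(h;\xi,\Omega)F(\alpha_\eta(h-\xi))\,dh$ are finite; from the latter, since $0 < F(\alpha_\eta(\cdot)) < 1$ and $F$ is nondecreasing with $F(-x)=1-F(x)$, one gets that $\int |G(u)|\phi_d(u;0,\Omega)F(\alpha_\eta(u))\,du < \infty$, and by the reflection $u \mapsto -u$ also $\int |G(u)|\phi_d(u;0,\Omega)(1-F(\alpha_\eta(u)))\,du = \int |G(-u)|\phi_d(u;0,\Omega)F(\alpha_\eta(u))\,du < \infty$ using $G(-u)=G(u)$; summing the two gives $\int |G(u)|\phi_d(u;0,\Omega)\,du < \infty$, so each piece of $\int G(u)\phi_d(u;0,\Omega)\{2F(\alpha_\eta(u))-1\}\,du$ is absolutely convergent and Fubini/dominated convergence legitimizes the antisymmetric cancellation. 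Everything else is immediate.
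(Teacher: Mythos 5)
Your proposal is correct and follows essentially the same route as the paper: the paper's proof is a one-line appeal to Proposition 6 of \citet{wang2004skew} (distributional invariance of skew--symmetric densities with respect to even functions), and your recentering/parity argument, with the careful reduction to absolute integrability of $\int |G(u)|\phi_d(u;0,\Omega)\,du$, is precisely a self--contained proof of that invariance specialized to the present setting. Nothing is missing.
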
   
As clarified in the Supplementary Materials, Lemma \ref{lemma:distr:inv} follows as a direct consequence of Proposition 6 in \citet{wang2004skew}. 

The proof of Theorem~\ref{thm:1} is reported below and extends to \textsc{sks} approximating distributions the  reasoning behind the derivation of general Bernstein–von Mises type results~\citep[][]{kleijn2012bernstein}. Nonetheless, as mentioned before, the need to derive sharper rates which establish a higher approximation accuracy, relative to  Gaussian limiting distributions, requires a number of additional technical lemmas and refined arguments ensuring a tighter control of the error terms  in the expansions behind Theorem~\ref{thm:1}. Note~also~that,~in addressing these aspects, it is not sufficient to rely  on standard theory for higher--order approximations. In fact, unlike for current results, Theorem~\ref{thm:1} establishes improved rates for a valid class of approximating densities. This means that, beside replacing the second--order expansion of the log--posterior with a third--order one, it is also necessary to carefully control the difference between such an expansion and the class of \textsc{sks} distributions.

\begin{proof} Let $p_{\textsc{sks}}^n(h)$ denote the \textsc{sks}  density in  \eqref{limiting:distribution} with parameters derived in Lemma~\ref{lemma:1} under Assumptions \ref{cond:LAN} and \ref{cond:prior} (see the Supplementary Materials). In addition, let $\pi_n^{K_n}(h)$ and \smash{$p_{\textsc{sks}}^{n,K_n}(h)$} be the constrained versions of  $\pi_{n}(h)$ and \smash{$p_{\textsc{sks}}^{n}(h)$} to the set $K_n=\{h{:} \  ||h|| <~M_n \}$, i.e,~$\smash{\pi_n^{K_n}(h)}= \pi_{n}(h)\mathbbm{1}_{h \in K_n}/ \Pi_n(K_n)$ and  \smash{$p_{\textsc{sks}}^{n,K_n}(h)= p_{\textsc{sks}}^n(h)\mathbbm{1}_{h \in K_n}/ P_{\textsc{sks}}^n(K_n)$}. Using triangle inequality	
\begin{eqnarray} \label{triangle:tv}
		\begin{aligned}
			\quad &\| \Pi_n(\cdot) - P_{\textsc{sks}}^n(\cdot) \|_{\textsc{tv}} =\frac{1}{2}\int| \pi_n(h) - p_{\textsc{sks}}^n(h)| dh \leq  \\
			& \quad \ \	 {\int}| \pi_n(h) -  \pi_n^{K_n}(h) | dh 
		\ {+} 	{\int}| \pi_n^{K_n}(h) - p_{\textsc{sks}}^{n,K_n}(h)| dh 
			\  {+}  {\int}|  p_{\textsc{sks}}^n(h)-p_{\textsc{sks}}^{n,K_n}(h)| dh,  
		\end{aligned}
	\end{eqnarray}
	 the proof of Theorem~\ref{thm:1} reduces to study the behavior of the three above summands.
	
	As for the first term, Assumption \ref{cond:concentration} and a standard inequality of the \textsc{tv} norm yield 
	\begin{eqnarray} \label{bound:out:pi}
		\int| \pi_n(h) -  \pi_n^{K_n}(h) | dh \leq 2 \int_{h\, : \, \|h\| > M_n } \pi_n(h) dh = O_{P_0^n}(\delta_n^2).
	\end{eqnarray}
	
	Let us now deal with the third term via a similar reasoning. More specifically, leveraging the same \textsc{tv} inequality as above and the fact that $F( \alpha_{\eta}(h-\xi))$ within the expression for   $p_{\textsc{sks}}^{n}(h)$ in \eqref{limiting:distribution} is a univariate cdf meeting the condition $|F( \alpha_{\eta}(h-\xi))| \leq 1$, we have
	\begin{eqnarray*}
					 {\int}|  p_{\textsc{sks}}^n(h)-p_{\textsc{sks}}^{n,K_n}(h)| dh 
			\leq   4 \int_{ h \,:\, \|h\|>M_n } \phi_d(h;\xi,\Omega) dh.
	\end{eqnarray*}
	Define $P_{\xi,\Omega}(\|h\| > M_n) := $ $\smash{\int_{ h \in K_n^c } \phi_d(h;\xi,\Omega)dh}$ and let $A_{n,1} =  \{ \lambda_{\textsc{min}}( \Omega ) > \eta_1 \} \cap \{ \lambda_{\textsc{max}}( \Omega )$ $< \eta_2\} \cap \{ \|\xi\| < \tilde M_n\} $ for some sequence $\tilde M_n=o(M_n)$ going to infinity arbitrary slowly and some $\eta_1,\eta_2>0$. Moreover, notice that $V_{\theta_{*}}^n - \Omega^{-1}$ has entries of order $O_{P_{0}^n}(\delta_n)$. As a consequence, in view of Assumptions \ref{cond:LAN}--\ref{cond:prior} and Lemma \ref{lemma:eigen:J} in the Supplementary Materials, it follows that $P_0^n A_{n,1} = 1-o(1)$. 
	Conditioned on $A_{n,1}$, the eigenvalues of $\Omega$ lay on a positive bounded range. This, together with  \smash{$\tilde M_n/M_n \to 0$}, imply
	\begin{eqnarray} \label{thm1:help1}
		\begin{aligned}
			\quad P_0^n ( P_{\xi,\Omega}(\|h\| > M_n)/\delta_n^2 > \epsilon  ) \,  = &\, P_0^n ( \{ P_{\xi,\Omega}(\|h\| > M_n)/\delta_n^2 > \epsilon \} \cap A_{n,1}  ) + o(1)\\
			&\leq P_0^n ( e^{- \tilde c_1 M_n^2 }/\delta_n^2 > \epsilon  \mid A_{n,1}  ) + o(1) = o(1),
		\end{aligned}
	\end{eqnarray} 
	for every $\epsilon>0$, where $\tilde{c}_1$ is a sufficiently small positive constant and the last inequality follows from the tail behavior of the multivariate Gaussian for a sufficiently large choice of $c_0$ in $M_n = \sqrt{ c_0 \log \delta_n^{-1} }$. 
	This gives 
	\begin{eqnarray} \label{bound:out:skw} 
		  {\int}|  p_{\textsc{sks}}^n(h)-p_{\textsc{sks}}^{n,K_n}(h)| dh 
			\leq   4 \int_{ h \,:\, \|h\|>M_n } \phi_d(h;\xi,\Omega) dh = o_{P_0^n}(\delta_n^2).
	\end{eqnarray}
We are left to study the last summand ${\int}| \pi_n^{K_n}(h) - p_{\textsc{sks}}^{n,K_n}(h)| dh$. To this end, let us consider the event
	$ A_{n,2} =  A_{n,1} \cap \{ \Pi_n(K_h) > 0\} \cap\{ P_{\textsc{sks}}^n(K_n) > 0 \}.$
	Notice that $$P_0^n ( \Pi_n(K_h) > 0 ) = 1-o(1)$$ by Assumption \ref{cond:concentration}. Moreover, in view of \eqref{thm1:help1}, it follows that
 $P_0^n  ( P_{\textsc{sks}}^n(K_n) > 0) = P_0^n   (1 -   P_{\textsc{sks}}^n(K^c_n) > 0  ) \geq P_0^n   ( 1 - 2 P_{\xi,\Omega}(\|h\| > M_n) > 0  )  = 1 - o(1),$
    implying, in turn, $P_0^n A_{n,2} = 1 - o(1)$. 
	As a consequence, we can restrict our attention to
	\begin{eqnarray}
		\begin{aligned}
	\qquad		& \int| \pi_n^{K_n}(h) - p_{\textsc{sks}}^{n,K_n}(h)| dh \mathbbm{1}_{A_{n,2}} \\
			&=  \int \Big| 1 - \int_{K_n} \frac{p_{\textsc{sks}}^{n,K_n}(h)}{p_{\textsc{sks}}^{n,K_n}(h')} \frac{ p_{\theta_* + \delta_n h' }(X^n)\pi(\theta_* + \delta_n h')}{ p_{\theta_* + \delta_n h }(X^n)\pi(\theta_* + \delta_n h)} p_{\textsc{sks}}^{n,K_n}(h') dh'\Big| \pi_n^{K_n}(h) dh  \mathbbm{1}_{A_{n,2}}. 
		\end{aligned}
		\label{equ_13}
	\end{eqnarray}
	Now, note that, by the definition of $p_{\textsc{sks}}^{n,K_n}(h)$, we have  $ p_{\textsc{sks}}^{n,K_n}(h)/p_{\textsc{sks}}^{n,K_n}(h')= p_{\textsc{sks}}^{n}(h)/p_{\textsc{sks}}^{n}(h')$. This fact, together with an application of Jensen's inequality, implies that the quantity on~the right hand side  of Equation~\eqref{equ_13} can be upper bounded by
		\begin{eqnarray*}
			\begin{aligned}
				&\int_{K_n \times K_n} \Big| 1 - \frac{p_{\textsc{sks}}^{n}(h)}{p_{\textsc{sks}}^{n}(h')} \frac{ p_{\theta_* + \delta_n h' }(X^n)\pi(\theta_* + \delta_n h')}{ p_{\theta_* + \delta_n h }(X^n)\pi(\theta_* + \delta_n h)} \Big| \pi_n^{K_n}(h)p_{\textsc{sks}}^{n,K_n}(h') dh dh'  \mathbbm{1}_{A_{n,2}}. \\
			\end{aligned}
	\end{eqnarray*} 
	At this point, it is sufficient to recall Lemma \ref{lemma:1} within the Supplementary Materials and~the fact that $e^x = 1 + x + e^{\beta x}x^2/2$, for some $\beta \in (0,1)$, to obtain  
	\begin{eqnarray} 
			{\int}| \pi_n^{K_n}(h) - p_{\textsc{sks}}^{n,K_n}(h)| dh \mathbbm{1}_{A_{n,2}} &\leq& {\int_{K_n \times K_n}} | 1 - e^{r_{n,4}(h') - r_{n,4}(h)} | \pi_n^{K_n}(h)p_{\textsc{sks}}^{n,K_n}(h') dh dh'  \mathbbm{1}_{A_{n,2}} \nonumber \\
			& \leq& 2 |r_{n,4}| + 2 \exp(2\beta|r_{n,4}|)r_{n,4}^2 = O_{P_0^n}(\delta_n^2 M_n^{c_3}),	\label{upper:cond:Kn}	
	\end{eqnarray}  
	where $r_{n,4}  = \sup_{h \in K_n} r_{n,4}(h) $, and
	$c_3$ is some constant defined in Lemma \ref{lemma:1}. We conclude the proof by noticing that the combination of \eqref{triangle:tv},  \eqref{bound:out:pi}, \eqref{bound:out:skw} and \eqref{upper:cond:Kn}, yields Equation \eqref{thm1:out} in Theorem~\ref{thm:1}.  
\end{proof}

%%%%%%%%%%%%%%%%%%%%%%%%%%%%%%%%%%%%%%%%%%%%%%
%%%%%%%%%%%%%%%%%%%%%%%%%%%%%%%%%%%%%%%%%%%%%%

\subsection{Skew--symmetric approximations in regular parametric models} \label{sec_fixd_thm}
Theorem~\ref{thm:1}~states a general result under broad assumptions. In this section, we strengthen the methodological and practical impact of such a result by specializing the analysis to the context of, possibly misspecified and non--i.i.d., regular parametric models with $d$ fixed and $\delta_n = n^{-1/2}$.~The~focus on this practically--relevant setting crucially clarifies that Assumptions \ref{cond:uni}--\ref{cond:concentration} can be~readily verified under standard explicit conditions on the log--likelihood and log--prior derivatives, which in turn enter the definition of the \textsc{sks} parameters $\xi$, $\Omega$ and $\alpha_{\eta}(h-\xi)$. This~allows direct and closed--form derivation of $p_{\textsc{sks}}^n(h)$ in routine implementations of deterministic approximations for intractable posteriors induced by broad classes of parametric models. As stated in Corollary~\ref{corol:1}, in this setting the resulting \textsc{sks} approximating density achieves a remarkable improvement in the rates by a $\sqrt{n}$ factor, up to a poly--log term, relative to the classical Gaussian approximation. This accuracy gain can be proved both for the \textsc{tv} distance from the target posterior and also for the absolute error in the approximation for the posterior expectation of general polynomially bounded functions, with finite prior expectation.

Prior to stating Corollary~\ref{corol:1}, let us introduce a number of explicit assumptions which allow to specialize the general theory in Section \ref{sec_general_thm} to the setting with $d$ fixed and $\delta_n = n^{-1/2}$. As discussed in the following, Assumptions \ref{cond:1}--\ref{cond:2} provide natural and verifiable conditions which ensure that the general Assumptions \ref{cond:LAN}--\ref{cond:concentration}  are met, thereby allowing Theorem~\ref{thm:1} to be applied, and specialized, to the regular parametric models setting. 

\begin{assumption}	\label{cond:1} Define $ \ell^{(4)}_{\theta_*,stlk}(h):=\ell_{stlk}^{(4)}(\theta_*+h/\sqrt{n}) $. Then, the log--likelihood~of~the, possibly misspecified,  model is four times differentiable at $\theta_*$ with 
	\vspace{-5pt}
	\begin{eqnarray*}
		\ell^{(1)}_{\theta_*,s} =O_{P_{0}^{n}}(n^{1/2}), \quad  \ell^{(2)}_{\theta_*,st} = O_{P_{0}^{n}}(n),  \quad \ell^{(3)}_{\theta_*,stl} = O_{P_{0}^{n}}(n), \quad \mbox{for } \ s,t,l=1, \ldots, d,
	\end{eqnarray*}
	and \smash{$\sup_{h \in K_n} |\ell^{(4)}_{\theta_*,stlk}(h)|= O_{P_{0}^{n}}(n)$},  for $s,t,l, k=1, \ldots, d$. 
\end{assumption}
\begin{assumption}
	\label{cond:2}   The entries of the expected Fisher information matrix satisfy $i_{st}=~O(n)$ while $j_{st}/n - i_{st}/n =\smash{ O_{P_{0}^{n}}(n^{-1/2})}$, for $s,t = 1,\dots,d.$ 
	Moreover, there exist two positive constants  $\eta_1$ and $\eta_2$ such that $  \lambda_{\textsc{min}}(I_{\theta_*} / n) > \eta_1$ and $  \lambda_{\textsc{max}}(I_{\theta_*} / n) < \eta_2$.
\end{assumption}
\begin{assumption}
	\label{cond:3}  The log--prior density $ \log \pi(\theta)$ is two times continuously differentiable in a neighborhood of $\theta_*$, and $0 < \pi(\theta_*) < \infty$. 	
\end{assumption}
\begin{assumption}
	\label{cond:4} For every sequence $M_n \to \infty$ there exists a constant $c_5>0$ such that 
	$  \lim_{n \to \infty } P_0^{n}  \lbrace \sup_{\|\theta - \theta_*\|> M_n/\sqrt{n}}\{ (\ell(\theta) - \ell(\theta_*))/n  \} < - c_5 M_n^2/n \rbrace = 1.$
\end{assumption}

Assumptions \ref{cond:1}--\ref{cond:2} are mild and considered standard in classical frequentist theory \citep[see~e.g.,][p. 347]{salvan1997}. In Lemma \ref{lemma:taylor:fixed:d} we show that these conditions allow to control with precision the error in the Taylor approximation of the log--likelihood.
 Assumption \ref{cond:3} is also mild and is satisfied by several priors that are commonly used in practice. Such~a~condition allows to consider a first order Taylor expansion for the log--prior of the form
 \begin{eqnarray} \label{taylor:lprior:star}
 	\log \pi(\theta) = 	\log \pi(\theta_*) + \log \pi^{(1)}_{\theta_*,s} h_s/\sqrt{n} + r_{n,2}(h), 
 \end{eqnarray}
 with $  r_{n,2} :=  \sup_{h \in K_n} r_{n,2}(h) = O(M_n^2/n)$.
Finally, Assumption \ref{cond:4} is required to control the 
the rate of contraction of the, possibly misspecified, posterior distribution into $K_n$. In other modern versions of Berstein--Von Mises type results, such an assumption is usually replaced by conditions on the existence of a suitable sequence of tests. Sufficient conditions for the correctly--specified case can be found, for example, in \citet{van2000asymptotic}. In the misspecified setting, assumptions ensuring the existence of these tests have been derived by \citet{kleijn2012bernstein}. Another possible option is to assume, for every $\delta>0$, the presence of a positive constant $c_{\delta}$ such that
\begin{eqnarray} \label{cond:koers2023} 
	\lim_{n \to \infty } P_0^{n} \lbrace {\textstyle \sup_{\|\theta - \theta_*\|> \delta}}\{ (\ell(\theta) - \ell(\theta_*))/n  \} < - c_{\delta} \rbrace = 1.
\end{eqnarray}
In the misspecified setting, condition \eqref{cond:koers2023} is considered by, e.g., \citet{koers2023}.
Assumption \ref{cond:4} is a slightly more restrictive version of \eqref{cond:koers2023}. In fact, Lemma \ref{lemma:cond4} below shows that it is implied by mild and readily--verifiable sufficient conditions. 

Under Assumption \ref{cond:uni} and  \ref{cond:1}--\ref{cond:4}, Corollary \ref{corol:1} below clarifies that Theorem \ref{thm:1} holds for a general class of \textsc{sks} distributions yielding \textsc{tv} rates in approximating the exact posterior of order $O_{P_0^n}(M_n^{c_6}/n )$, with $c_6>0$ and $M_n = \sqrt{c_0 \log n}$. Furthermore,  the  \textsc{sks} parameters are  defined~as explicit functions of the log--prior and log--likelihood derivatives. As stated in Equation~\eqref{corol1:out2} of Corollary \ref{corol:1}, the same rates can be derived also for the absolute error in  approximating the posterior expectation of general polynomially bounded functions.

\begin{corollary} \label{corol:1}
	Let $h = \sqrt{n}(\theta - \theta_*),$ and define  $M_n = \sqrt{c_0 \log n}$, with $c_0>0$. Then, under Assumptions \ref{cond:uni} and \ref{cond:1}--\ref{cond:4} it holds
	\begin{eqnarray} \label{corol1:out1} 
		\| \Pi_n(\cdot) - P_{\textsc{sks}}^n(\cdot) \|_{\textsc{tv}} = O_{P_{0}^n}(M_n^{c_6}/n).
	\end{eqnarray}
	where $c_6 >0$, and $P_{\textsc{sks}}^n(\cdot)$ is the cdf of the \textsc{sks} density $p_{\textsc{sks}}^n(h)$ in  \eqref{limiting:distribution} with parameters 
		\begin{eqnarray*}
	\begin{split}
		&\xi=\smash{[n (J^{-1}_{\theta_*})_{st}u_t]}, \qquad \Omega^{-1} = [ j_{st}/n-(\xi_{l}\ell^{(3)}_{\theta_*,stl}/n)/\sqrt{n} ],\\
	&\alpha_{\eta}(h-\xi) ={\{}1/(12 \eta \sqrt{n})\}  (\ell^{(3)}_{\theta_*,stl}/n) \{(h-\xi)_{s}(h-\xi)_{t}(h-\xi)_{l}+3(h-\xi)_{s} \xi_{t} \xi_{l}\}.
	\end{split}
	\end{eqnarray*}
where 	$\smash{u_t=(\ell^{(1)}_{\theta_{*}} +  \log \pi^{(1)}_{\theta_{*}} )_t/\sqrt{n}}$ for $t=1, \ldots, d$.	 The function $F(\cdot)$ entering the definition of $p_{\textsc{sks}}^n(h)$  in  \eqref{limiting:distribution}  denotes any univariate cdf  which satisfies $F(-x) = 1 - F(x) $ and $F(x) = 1/2 + \eta x + O(x^2)$, for some $\eta \in \mathbb{R}$, when $x \to 0$. In addition, let $G \, : \, \mathbbm{R}^d \to R $ be a function satisfying $ | G | \lesssim \|h\|^r $. If the prior is such that $\int \|h\|^r \pi(\theta + h/\sqrt{n}) dh < \infty$
	then 
	\begin{eqnarray} \label{corol1:out2}
	\int  G(h) | \pi_n(h) - p_{\textsc{sks}}^n(h) | dh = O_{P_0^n}(M_n^{c_6 + r}/n).
	\end{eqnarray} 
 with $p_{\textsc{sks}}^n(h)$ denoting  the skew--symmetric approximating density defined above.
\end{corollary}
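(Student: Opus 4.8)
\textbf{Proof plan for Corollary~\ref{corol:1}.}
The plan is to verify that Assumptions~\ref{cond:1}--\ref{cond:4}, together with Assumption~\ref{cond:uni}, imply the general Assumptions~\ref{cond:uni}--\ref{cond:concentration} of Theorem~\ref{thm:1} when $\delta_n = n^{-1/2}$ and $d$ is fixed, and then to identify the general \textsc{sks} parameters of Theorem~\ref{thm:1} with the explicit closed--form expressions in~\eqref{eq_as_1}. For Assumption~\ref{cond:LAN}, I would apply Lemma~\ref{lemma:taylor:fixed:d} (invoked in the excerpt), which under Assumptions~\ref{cond:1}--\ref{cond:2} gives a fourth--order Taylor control of the log--likelihood ratio with remainder $\sup_{h\in K_n}|r_{n,1}(h)| = O_{P_0^n}(n^{-1}M_n^{c_1})$: the quadratic coefficient is $j_{st}/n$, which concentrates around $i_{st}/n$ at rate $n^{-1/2}$ by Assumption~\ref{cond:2}, so $V^n_{\theta_*} = [j_{st}/n]$ has eigenvalues trapped in $(\eta_1,\eta_2)$ with probability $1-o(1)$; the linear term identifies $\Delta^n_{\theta_*} = [n(J_{\theta_*}^{-1})_{st}\ell^{(1)}_{\theta_*,t}/\sqrt n]$, which is $O_{P_0^n}(1)$ by Assumption~\ref{cond:1}; and the cubic array is $a^{(3),n}_{\theta_*,stl} = \ell^{(3)}_{\theta_*,stl}/n = O_{P_0^n}(1)$. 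Assumption~\ref{cond:prior} follows from the expansion~\eqref{taylor:lprior:star} granted by Assumption~\ref{cond:3}, with $\log\pi^{(1)} = [\log\pi^{(1)}_{\theta_*,s}]$ and remainder $O(M_n^2/n)$. Assumption~\ref{cond:concentration} is exactly the statement that I would extract from Assumption~\ref{cond:4} via Lemma~\ref{lemma:cond4}: the uniform separation $\sup_{\|\theta-\theta_*\|>M_n/\sqrt n}(\ell(\theta)-\ell(\theta_*))/n < -c_5 M_n^2/n$ combined with the local quadratic lower bound on the log--posterior and the prior positivity of Assumption~\ref{cond:3} forces $\Pi_n(\|\theta-\theta_*\|>M_n\delta_n) < \delta_n^2$ with probability $1-o(1)$, by the usual argument of bounding the numerator by $e^{-c_5 M_n^2}$ times the prior mass and the denominator from below by integrating over a shrinking ball around $\theta_*$.

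Once the general assumptions are checked, Theorem~\ref{thm:1} yields~\eqref{corol1:out1} directly with $M_n = \sqrt{c_0\log n}$, and a short computation substituting $\delta_n = n^{-1/2}$, $V^n_{\theta_*} = [j_{st}/n]$, $\Delta^n_{\theta_*} = [n(J_{\theta_*}^{-1})_{st}\ell^{(1)}_{\theta_*,t}/\sqrt n]$, $a^{(3),n}_{\theta_*} = \ell^{(3)}_{\theta_*}/n$ and $\log\pi^{(1)}$ into the parameter formulas of Theorem~\ref{thm:1} reproduces the stated $\xi$, $\Omega^{-1}$ and $\alpha_\eta(h-\xi)$ in terms of $u_t = (\ell^{(1)}_{\theta_*} + \log\pi^{(1)}_{\theta_*})_t/\sqrt n$; here one uses that $\xi = \Delta^n_{\theta_*} + \delta_n (V^n_{\theta_*})^{-1}\log\pi^{(1)} = [n(J_{\theta_*}^{-1})_{st}u_t]$ after combining the score and log--prior gradient.

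For the polynomially bounded functional bound~\eqref{corol1:out2}, I would split $\int G(h)|\pi_n(h) - p_{\textsc{sks}}^n(h)|\,dh$ over $K_n = \{\|h\| < M_n\}$ and its complement. On $K_n$, bound $|G(h)| \lesssim M_n^r$ and pull it out of the integral, so the contribution is $O_{P_0^n}(M_n^r)\cdot\|\Pi_n - P^n_{\textsc{sks}}\|_{\textsc{tv}} = O_{P_0^n}(M_n^{c_6+r}/n)$ by~\eqref{corol1:out1}. On $K_n^c$ I would bound the two pieces separately: for the posterior part, $\int_{\|h\|>M_n}|G(h)|\pi_n(h)\,dh \le (\int_{\|h\|>M_n}\|h\|^{2r}\pi_n(h)\,dh)^{1/2}(\Pi_n(\|h\|>M_n))^{1/2}$ by Cauchy--Schwarz, where the tail probability is $\le \delta_n^2$ by Assumption~\ref{cond:concentration} and the $2r$--th posterior moment restricted to the tail is controlled by the finite prior moment hypothesis $\int\|h\|^r\pi(\theta_*+h/\sqrt n)\,dh < \infty$ together with the likelihood-ratio expansion (one writes the posterior moment as a ratio and bounds the numerator using that on $K_n^c$ the integrand decays, using Assumption~\ref{cond:4} for the far region and the prior moment for moderate $\|h\|$); for the \textsc{sks} part, $\int_{\|h\|>M_n}\|h\|^r\cdot 2\phi_d(h;\xi,\Omega)\,dh$ is a Gaussian tail moment which is $o(\delta_n^2) = o(1/n)$ for $c_0$ large enough, since on the event $A_{n,1}$ of the proof of Theorem~\ref{thm:1} the eigenvalues of $\Omega$ are bounded and $\|\xi\| = o(M_n)$. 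Collecting the three contributions gives~\eqref{corol1:out2}.

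\textbf{Main obstacle.} The delicate step is the control of the tail posterior moment $\int_{\|h\|>M_n}\|h\|^{2r}\pi_n(h)\,dh$ in the functional bound: unlike the \textsc{tv} statement, a polynomially growing $G$ is not absorbed by the crude concentration inequality, so one needs a genuine moment bound for $\pi_n$ outside $K_n$. This requires combining Assumption~\ref{cond:4} (to kill the contribution of $\|\theta-\theta_*\|$ bounded away from $0$, where $\ell(\theta)-\ell(\theta_*)\lesssim -nc$) with the finite prior moment assumption and a careful bookkeeping of the normalizing constant $\Pi_n(K_n) \asymp 1$; handling the intermediate annulus $M_n/\sqrt n < \|\theta-\theta_*\| \le \varepsilon$ where only the local quadratic behavior is available is the fussy part, and it is here that one must be slightly more careful than in the pure \textsc{tv} argument.
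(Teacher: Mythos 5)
Your plan for \eqref{corol1:out1} is essentially the paper's own proof: check that Assumptions \ref{cond:uni} and \ref{cond:1}--\ref{cond:4} imply the general Assumptions \ref{cond:uni}--\ref{cond:concentration} with $\delta_n=n^{-1/2}$ (via the third--order Taylor control of the log--likelihood and the log--prior expansion), then apply Theorem \ref{thm:1} and substitute the explicit quantities into its parameter formulas. One small misattribution: the passage from Assumption \ref{cond:4} to the concentration condition (Assumption \ref{cond:concentration}) is the content of Lemma \ref{lemma:post:contr}, whose argument you in fact describe correctly, whereas Lemma \ref{lemma:cond4} runs in the opposite direction (it gives sufficient primitive conditions \emph{for} Assumption \ref{cond:4}). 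The decomposition of the functional bound \eqref{corol1:out2} into the $K_n$ part, the \textsc{sks} Gaussian tail and the posterior tail also coincides with the paper's.

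The one step where you deviate — the posterior tail term — is where your route has a genuine gap. Bounding $\int_{K_n^c}\|h\|^r\pi_n(h)\,dh$ by Cauchy--Schwarz forces you to control a $2r$--th posterior tail moment, and any ratio bound for that quantity requires $\int\|h\|^{2r}\pi(\theta_*+h/\sqrt n)\,dh<\infty$, which is not among the corollary's hypotheses (only the $r$--th rescaled prior moment is assumed). Moreover, the ``main obstacle'' you flag, the intermediate annulus $M_n/\sqrt n<\|\theta-\theta_*\|\le\varepsilon$, is illusory at this stage: Assumption \ref{cond:4} is a \emph{uniform} statement over the whole complement $\{\|\theta-\theta_*\|>M_n/\sqrt n\}$, so on a high--probability event the likelihood ratio satisfies $e^{\ell(\theta)-\ell(\theta_*)}\le n^{-c_0c_5}$ everywhere on $K_n^c$, annulus included; the annulus is only delicate when \emph{establishing} Assumption \ref{cond:4} from primitive conditions, which is exactly what Lemma \ref{lemma:cond4} already does. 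The paper therefore bounds the posterior tail directly: on an event where in addition $\int_{K_n}e^{\ell(\theta_*+h/\sqrt n)-\ell(\theta_*)}\pi(\theta_*+h/\sqrt n)\,dh$ is bounded away from zero (this is supplied by display \eqref{contraction:denom} in the proof of Lemma \ref{lemma:post:contr}), one gets $\int_{K_n^c}\|h\|^r\pi_n(h)\,dh\lesssim n^{-c_0c_5}\int\|h\|^r\pi(\theta_*+h/\sqrt n)\,dh=O(n^{-1})$ for $c_0$ large enough, the large $c_0$ also absorbing the polynomial-in-$n$ factor hidden in the rescaled prior moment. Replacing your Cauchy--Schwarz detour by this direct bound removes the need for $2r$--th moments and closes the gap; the remaining pieces of your argument (the $M_n^r$ factor on $K_n$ and the Gaussian tail moment on the event $A_{n,1}$) are as in the paper.
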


\vspace{-15pt}

\begin{remark}\label{rem_0}
	As for Theorem \ref{thm:1}, under conditions similar to those required by Corollary \ref{corol:1}, it is possible to show that the \textsc{tv} distance between the posterior and the Gaussian approximation dictated by the classical Bernstein--von Mises theorem is $O_{P_{0}^n}(M_n^{c_7}/\sqrt{n})$ for some fixed $c_7 >0$. Therefore, the improvement in rates achieved by the proposed \textsc{sks} approximation is by a remarkable $\sqrt{n}$ factor, up to a poly--log term. As~illustrated in Figure~\ref{fig:tv_intui}, this implies that the \textsc{sks} solution is expected to substantially improve, in practice, the accuracy of the classical Gaussian in approximating the target posterior, while inheriting its limiting frequentist properties. Intuitively, the rates we derive suggest that the proposed \textsc{sks} approximation can possibly attain with a $n \approx \sqrt{\bar{n}}$ sample~size the same accuracy obtained by its Gaussian counterpart with a sample size of $\bar{n}$. The empirical studies in Sections~\ref{sec_24} and~\ref{sec_32} confirm this intuition, which is further strengthened in Section~\ref{sec_3} through the derivation of non--asymptotic upper bounds for the practical skew--modal approximation, along with novel lower bounds for the classical Gaussian from the Laplace method.
\end{remark}

\begin{remark}
Equation~\eqref{corol1:out2} confirms that the improved rates  hold also when the focus is on the error in approximating the posterior expectation of generic polynomially bounded functions. More specifically, notice that, by direct application of standard properties of integrals, the proof of Equation~\eqref{corol1:out2} in the Supplementary Materials, implies
	\begin{eqnarray} \label{corol1:out3}
	\textstyle \big| \int  G(h)\pi_n(h) dh - \int  G(h)p_{\textsc{sks}}^n(h) dh | = O_{P_0^n}(M_n^{c_6 + r}/n).
	\end{eqnarray} 
This clarifies that the skewed Bernstein--von Mises type result in~\eqref{corol1:out1} has important methodological and practical consequences  that point toward remarkable improvements in the approximation of posterior functionals of direct interest for inference (e.g., moments).
\end{remark}

\begin{figure}[t!]
	\centering
		\includegraphics[trim={0.98cm 0 0 0},clip, width=0.86\textwidth]{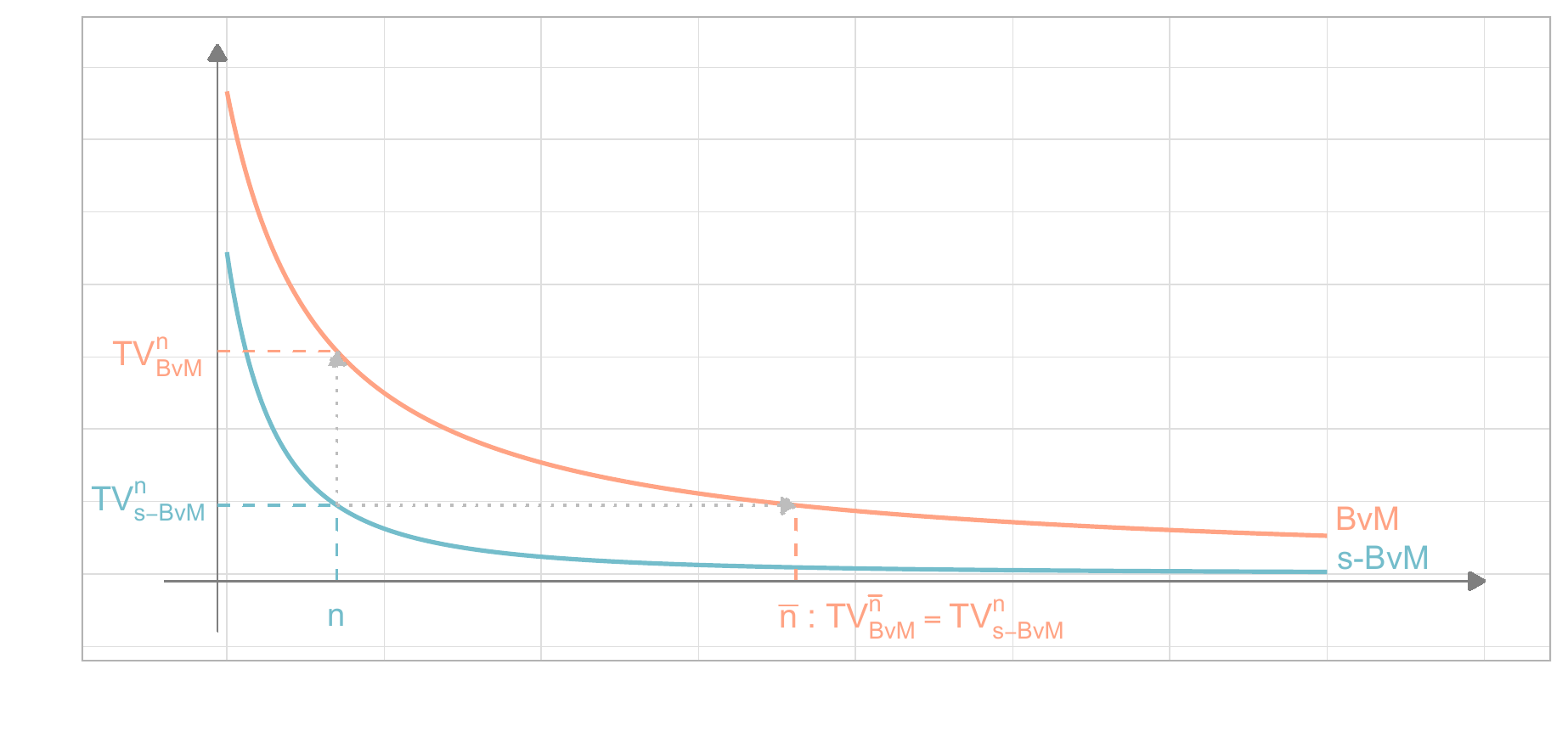}
		\vspace{-20pt}
		\caption{ \footnotesize Illustrative graphical comparison among the \textsc{tv}--rates achieved by the proposed skew--symmetric approximation (\textsc{s--BvM}) and those derived under the classical Bernstein--von Mises theorem based on Gaussians (\textsc{BvM}). For a given $n$, we show the  \textsc{tv} distances $\textsc{tv}_{\textsc{s--BvM}}^n$ and $\textsc{tv}_{\textsc{BvM}}^n$ expected under the  \textsc{s--BvM} and  \textsc{BvM} rates, respectively. We further highlight the expected sample size $\bar{n} \gg n$  required by \textsc{BvM} to attain the same \textsc{tv} distance as the one achieved by \textsc{s--BvM} with the original $n$, under the derived rates. The empirical studies in Sections~\ref{sec_24} and~\ref{sec_32} support this graphical intuition.}	
		\vspace{-10pt}
	\label{fig:tv_intui}
\end{figure}

The proof of Equation~\eqref{corol1:out2} can be found in the Supplementary Materials. As for the~main result in Equation~\eqref{corol1:out1}, it is sufficient to apply  Theorem \ref{thm:1}, after ensuring that its Assumptions \ref{cond:uni}--\ref{cond:concentration} are implied by \ref{cond:uni} and \ref{cond:1}--\ref{cond:4}. Section~\ref{sec_logp} presents two key results (see Lemma~\ref{lemma:taylor:fixed:d} and Lemma \ref{lemma:post:contr}) which address this point. Section~\ref{sec_logp1} introduces instead simple and verifiable conditions which ensure the validity of Assumption~\ref{cond:4}.
%%%%%%%%%%%%%%%%%%%%%%%%%%%%%%%%%%%%%%%%%%%%%%
%%%%%%%%%%%%%%%%%%%%%%%%%%%%%%%%%%%%%%%%%%%%%%

\subsubsection{Log--posterior asymptotics and posterior contraction}\label{sec_logp}
In order to move from the general theory within Theorem \ref{thm:1} to the specialized setting considered in Corollary \ref{corol:1}, two key points are the rate at which the target posterior concentrates in $K_n = \{h \, : \, \|h\| < M_n\}$ and the behavior of the Taylor expansion of the log--likelihood within $K_n$. Here we show~that Assumptions \ref{cond:uni} and \ref{cond:1}--\ref{cond:4} are indeed sufficient to obtain the required guarantees.
Lemma~\ref{lemma:taylor:fixed:d} below establishes that, under Assumptions \ref{cond:1}--\ref{cond:2}, the error introduced by replacing the log--likelihood with its third--order Taylor approximation is uniformly of order $M_n^4/n$ on $K_n$.

\begin{lemma} \label{lemma:taylor:fixed:d}
	Under Assumptions \ref{cond:1}--\ref{cond:2}, it holds in $K_n = \{ h \, : \,  \|h\| \leq M_n \}$ that  
	\begin{eqnarray} \label{taylor:llik:star} 
		\log \frac{p_{\theta_* + h/\sqrt{n} }^n}{p_{\theta_*}^n}(X^n) - h_s v_{st}^n \Delta_{\theta_*, t}^n + \frac{1}{2}  v_{st}^n h_s h_{t} - \frac{1}{6 \sqrt{n}} a^{(3),n}_{\theta_*,stl} h_s h_t h_l  = r_{n,1}(h),
	\end{eqnarray}
	with $\Delta_{\theta_*, t}^n = j^{-1}_{st} \sqrt{n} \ell^{(1)}_{\theta_{*},s} = O_{P_0^n}(1),$ $v_{st}^n = j_{st}/n = O_{P_0^n}(1)$ and $a^{(3),n}_{\theta_*,stl} = \ell^{(3)}_{\theta_*,stl}/n= O_{P_0^n}(1)$. Moreover,	  \smash{$r_{n,1} := \sup_{h \in K_n}\left| r_{n,1}(h) \right|=O_{P_0^n}(M_n^{4}/n)$}.
\end{lemma}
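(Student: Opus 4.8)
The plan is to read off \eqref{taylor:llik:star} from a fourth--order multivariate Taylor expansion of $\theta\mapsto\ell(\theta)$ about $\theta_*$, matching the first three terms to the stated quantities and collecting the fourth--order term into $r_{n,1}(h)$, whose size is then controlled by the uniform fourth--derivative bound in Assumption~\ref{cond:1}. First I would write $\log\{p^n_{\theta_*+h/\sqrt n}/p^n_{\theta_*}\}(X^n)=\ell(\theta_*+h/\sqrt n)-\ell(\theta_*)$ and, using that by Assumption~\ref{cond:1} $\ell$ is four times differentiable on a neighbourhood of $\theta_*$ that contains $\{\theta_*+h/\sqrt n:h\in K_n\}$ for $n$ large (since $M_n/\sqrt n\to0$), apply Taylor's theorem with integral remainder to obtain
\[
\ell(\theta_*+h/\sqrt n)-\ell(\theta_*)=\frac{\ell^{(1)}_{\theta_*,s}}{\sqrt n}h_s-\frac12\frac{j_{st}}{n}h_sh_t+\frac16\frac{\ell^{(3)}_{\theta_*,stl}}{n^{3/2}}h_sh_th_l+r_{n,1}(h),
\]
where $r_{n,1}(h)=\tfrac{1}{6n^2}\int_0^1(1-u)^3\,\ell^{(4)}_{\theta_*,stlk}(uh)\,h_sh_th_lh_k\,du$ (here $\ell^{(2)}_{\theta_*,st}=-j_{st}$ has been inserted).

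\textbf{Matching terms and stochastic orders.} Next I would identify the three explicit terms with those in \eqref{taylor:llik:star}. The quadratic term equals $-\tfrac12 v^n_{st}h_sh_t$ by the definition $v^n_{st}=j_{st}/n$, and the cubic term equals $\tfrac{1}{6\sqrt n}a^{(3),n}_{\theta_*,stl}h_sh_th_l$ by $a^{(3),n}_{\theta_*,stl}=\ell^{(3)}_{\theta_*,stl}/n$. For the linear term, substituting $\Delta^n_{\theta_*,t}=j^{-1}_{st}\sqrt n\,\ell^{(1)}_{\theta_*,s}$ and using $v^n_{st}j^{-1}_{tm}=(1/n)j_{st}(J^{-1}_{\theta_*})_{tm}=\delta_{sm}/n$ gives $h_sv^n_{st}\Delta^n_{\theta_*,t}=\ell^{(1)}_{\theta_*,s}h_s/\sqrt n$; invertibility of $J_{\theta_*}$ with probability tending to one, needed here, follows from Assumption~\ref{cond:2} ($\lambda_{\textsc{min}}(I_{\theta_*}/n)>\eta_1$ and $j_{st}/n-i_{st}/n=O_{P_0^n}(n^{-1/2})$ imply $(J_{\theta_*}/n)^{-1}=O_{P_0^n}(1)$). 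The stated orders then follow: $a^{(3),n}_{\theta_*,stl}=O_{P_0^n}(1)$ from $\ell^{(3)}_{\theta_*,stl}=O_{P_0^n}(n)$; $v^n_{st}=O_{P_0^n}(1)$ from $i_{st}=O(n)$ and the Fisher--information approximation; and $\Delta^n_{\theta_*,t}=O_{P_0^n}(1)$ by writing $(J^{-1}_{\theta_*})_{st}=n^{-1}((J_{\theta_*}/n)^{-1})_{st}=O_{P_0^n}(n^{-1})$ and multiplying by $\sqrt n\cdot O_{P_0^n}(\sqrt n)$ from $\ell^{(1)}_{\theta_*,s}=O_{P_0^n}(n^{1/2})$.

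\textbf{Remainder bound and the main obstacle.} Finally I would bound $r_{n,1}$: for every $u\in[0,1]$ and $h\in K_n$ one has $\|uh\|\le\|h\|\le M_n$, so $uh\in K_n$ in the sense of Assumption~\ref{cond:1}, whence $|\ell^{(4)}_{\theta_*,stlk}(uh)|\le\sup_{h'\in K_n}|\ell^{(4)}_{\theta_*,stlk}(h')|=O_{P_0^n}(n)$ uniformly in $u$ and $h$; bounding the contraction by $(\sum_s|h_s|)^4\le d^2\|h\|^4\le d^2M_n^4$ (legitimate since $d$ is fixed) and using $\int_0^1(1-u)^3\,du=1/4$ yields $\sup_{h\in K_n}|r_{n,1}(h)|\le\tfrac{1}{24n^2}\cdot O_{P_0^n}(n)\cdot d^2M_n^4=O_{P_0^n}(M_n^4/n)$. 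I do not expect a genuine obstacle here: the only points needing care are (i) ensuring the intermediate points arising in the Taylor remainder remain inside the region where Assumption~\ref{cond:1} provides uniform fourth--derivative control, which holds by convexity of the segment $[\theta_*,\theta_*+h/\sqrt n]$ together with $M_n/\sqrt n\to0$, and (ii) the index bookkeeping and norm equivalences used to pass from tensor contractions to powers of $\|h\|$, which are routine in the fixed--$d$ regime.
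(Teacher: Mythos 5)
Your proposal is correct and takes essentially the same route as the paper: a fourth--order Taylor expansion of the log--likelihood ratio (your integral--form remainder versus the paper's Lagrange form with $\beta\in(0,1)$ is immaterial), the identical rewriting of the linear term as $h_s v^n_{st}\Delta^n_{\theta_*,t}$ via $J_{\theta_*}^{-1}$, and the same bound $\sup_{h\in K_n}|r_{n,1}(h)|=O_{P_0^n}(M_n^4/n)$ from the uniform fourth--derivative control in Assumption~\ref{cond:1}. The only difference is that you assert $(J_{\theta_*}/n)^{-1}=O_{P_0^n}(1)$ directly from Assumption~\ref{cond:2}, whereas the paper justifies this eigenvalue--perturbation step separately in Lemma~\ref{lemma:eigen:J}; this is a standard argument and not a gap.
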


Lemma~\ref{lemma:post:contr} shows that under the above conditions it is then possible to select $c_0$ in $M_n = \sqrt{c_0 \log n}$, and hence $K_n$, such that the posterior distribution concentrates its mass in $K_n$, at any polynomial rate, with $P_0^n$ probability tending to 1 as $n \to \infty$. This in turn implies that Assumption \ref{cond:concentration} in Section \ref{sec_general_thm} is satisfied.

\begin{lemma}[Posterior contraction] \label{lemma:post:contr}
	Under Assumptions \ref{cond:1}--\ref{cond:4}, there exists a sufficiently--large $c_0> 0 $  in $M_n = \sqrt{c_0 \log n}$, such that for every $D>0$ it holds 
	\begin{eqnarray*}
		\lim_{n \to \infty } P_0^n \{ \Pi_n( K_n^c ) < n^{-D} \} = 1,  
	\end{eqnarray*} 
where  $K_n^c$ is the complement of $K_n$.
\end{lemma}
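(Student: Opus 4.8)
\textbf{Proof plan for Lemma~\ref{lemma:post:contr} (posterior contraction into $K_n$).}
The plan is to lower bound the denominator of the posterior mass $\Pi_n(K_n^c)$ by restricting the prior integral to a small ball around $\theta_*$ where the likelihood is controlled via Lemma~\ref{lemma:taylor:fixed:d}, and to upper bound the numerator using the uniform likelihood drop of Assumption~\ref{cond:4}. Writing
\begin{eqnarray*}
\Pi_n(K_n^c)=\frac{\int_{\|\theta-\theta_*\|>M_n/\sqrt n}\exp\{\ell(\theta)-\ell(\theta_*)\}\,\pi(\theta)\,d\theta}{\int_{\Theta}\exp\{\ell(\theta)-\ell(\theta_*)\}\,\pi(\theta)\,d\theta},
\end{eqnarray*}
I would first handle the denominator. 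On the event of Lemma~\ref{lemma:taylor:fixed:d} (which has $P_0^n$-probability $1-o(1)$), for $\theta=\theta_*+h/\sqrt n$ with $\|h\|\le \rho$ (a fixed small constant, smaller than $M_n$ for $n$ large), the third-order Taylor expansion gives $\ell(\theta)-\ell(\theta_*)\ge h_s v_{st}^n\Delta^n_{\theta_*,t}-\tfrac12 v_{st}^n h_sh_t-C\|h\|^3/\sqrt n-|r_{n,1}(h)|$; together with the eigenvalue bounds on $V^n_{\theta_*}$ from Assumption~\ref{cond:LAN}/Lemma~\ref{lemma:taylor:fixed:d} and $\Delta^n_{\theta_*}=O_{P_0^n}(1)$, this is bounded below by a quantity of the form $-c\,n$ on a ball of radius $\rho$, hence $\int_{\|\theta-\theta_*\|\le\rho/\sqrt n}\exp\{\ell(\theta)-\ell(\theta_*)\}\pi(\theta)d\theta\gtrsim \pi(\theta_*)\,(\rho/\sqrt n)^d\,e^{-c}$ by continuity and positivity of $\pi$ at $\theta_*$ (Assumption~\ref{cond:3}). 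So the denominator is $\gtrsim n^{-d/2}$ up to constants, with probability $1-o(1)$.

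Next, for the numerator I would split $\{\|\theta-\theta_*\|>M_n/\sqrt n\}$ into the ``moderate'' shell $\{M_n/\sqrt n<\|\theta-\theta_*\|\le\rho\}$ and the ``far'' region $\{\|\theta-\theta_*\|>\rho\}$. On the far region, Assumption~\ref{cond:4} (or its consequence via \eqref{cond:koers2023}) gives $\sup_{\|\theta-\theta_*\|>\rho}\{(\ell(\theta)-\ell(\theta_*))/n\}<-c_\rho$ with probability $1-o(1)$, so that part of the numerator is $\le e^{-c_\rho n}\int\pi(\theta)d\theta=e^{-c_\rho n}$, which is $o(n^{-d/2-D})$ for any $D$. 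On the moderate shell, I would again invoke Lemma~\ref{lemma:taylor:fixed:d}: for $\theta=\theta_*+h/\sqrt n$ with $M_n\le\|h\|\le\rho\sqrt n$, the expansion yields $\ell(\theta)-\ell(\theta_*)\le h_s v_{st}^n\Delta^n_{\theta_*,t}-\tfrac12\lambda_{\textsc{min}}(V^n_{\theta_*})\|h\|^2+C\|h\|^3/\sqrt n+r_{n,1}$; choosing $\rho$ small enough that the cubic term is dominated by half the quadratic term uniformly on the shell, and using $h_sv^n_{st}\Delta^n_{\theta_*,t}\le \tfrac14\lambda_{\textsc{min}}\|h\|^2+C'\|\Delta^n_{\theta_*}\|^2$, this is $\le -c_1\|h\|^2+C'$ for a fixed $c_1>0$. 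Integrating $e^{-c_1\|h\|^2}$ over $\{\|h\|>M_n\}$ against the (rescaled) prior gives a Gaussian-tail bound $\lesssim n^{-d/2}e^{-c_1 M_n^2/2}=n^{-d/2}e^{-c_1 c_0\log n/2}=n^{-d/2-c_1c_0/2}$, where I used $M_n^2=c_0\log n$.

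Combining, $\Pi_n(K_n^c)\lesssim \dfrac{n^{-d/2-c_1c_0/2}+e^{-c_\rho n}}{n^{-d/2}}\lesssim n^{-c_1c_0/2}$ on an event of $P_0^n$-probability $1-o(1)$; choosing $c_0>2D/c_1$ makes this $<n^{-D}$, which is the claim. The main obstacle — and the step requiring the most care — is the moderate shell: one must use Lemma~\ref{lemma:taylor:fixed:d} (which controls $r_{n,1}$ only on $K_n$, i.e.\ for $\|h\|\le M_n$) on the larger region $\|h\|\le\rho\sqrt n$. This forces a separate, coarser argument there, splicing the local third-order control with a global argument; the clean way is to fix a genuinely small constant radius $\rho$ so that on $\{\|h\|\le\rho\sqrt n\}$ the quartic remainder term $\sup|\ell^{(4)}|\|h\|^4/n = O_{P_0^n}(1)\cdot\|h\|^4/n$ (from Assumption~\ref{cond:1}) and the cubic term are both absorbed into a fraction of the quadratic, after which the shell integral reduces to a subgaussian tail, and only beyond $\rho$ does one switch to Assumption~\ref{cond:4}. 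One must also verify that the relevant events (eigenvalue bounds, $\Delta^n_{\theta_*}=O_{P_0^n}(1)$, the drop in Assumption~\ref{cond:4}, and the Taylor remainder bound) hold simultaneously with probability $1-o(1)$, which is immediate by a union bound since each is individually $1-o(1)$.
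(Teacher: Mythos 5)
Your overall skeleton (bounding $\Pi_n(K_n^c)$ by a numerator/denominator ratio, lower--bounding the denominator via the Taylor expansion and prior positivity near $\theta_*$, and showing the numerator decays faster than any polynomial) matches the paper, and your denominator argument is fine since the ball of fixed radius $\rho$ in the $h$--scale is eventually contained in $K_n$, where Lemma~\ref{lemma:taylor:fixed:d} applies. The genuine gap is in your treatment of the ``moderate shell'' $\{M_n/\sqrt n<\|\theta-\theta_*\|\le\rho\}$: there you need the third--order expansion with a controlled remainder for all $\|h\|\le\rho\sqrt n$, but Lemma~\ref{lemma:taylor:fixed:d} bounds $r_{n,1}$ only on $K_n=\{\|h\|\le M_n\}$, and Assumption~\ref{cond:1} likewise controls $\ell^{(4)}_{\theta_*,stlk}(h)$ only for $h\in K_n$, not on a fixed--radius ball around $\theta_*$. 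Your proposed fix (absorbing the quartic remainder via ``$\sup|\ell^{(4)}|=O_{P_0^n}(n)$ from Assumption~\ref{cond:1}'') reuses exactly the bound that is unavailable outside $K_n$, so under the stated hypotheses the shell argument does not go through.

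The detour is also unnecessary, and this is where your route diverges from the paper's. Assumption~\ref{cond:4} — which is a hypothesis of the lemma — already gives, with probability tending to one, the uniform drop $\sup_{\|h\|>M_n}\{\ell(\theta_*+h/\sqrt n)-\ell(\theta_*)\}<-c_5M_n^2$ on the \emph{entire} complement $K_n^c$, covering both your shell and your far region at once. Combined with the finiteness of $\int\pi(\theta_*+h/\sqrt n)\,dh$ (a factor $n^{d/2}$ after the change of variables) and boundedness of the prior near $\theta_*$, this makes the numerator $O_{P_0^n}(n^{-D})$ for any $D$ once $c_0$ is large, which together with your (correct) constant--order lower bound on the denominator gives the claim — this is the paper's proof. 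The work of deducing the full Assumption~\ref{cond:4} from a fixed--$\delta$ drop like \eqref{cond:koers2023} is deliberately isolated in Lemma~\ref{lemma:cond4}, and there the moderate range is handled not by Taylor--expanding the realized log--likelihood but by combining concavity of the \emph{expected} log--likelihood (condition R1) with the uniform $O_{P_0^n}(n^{-1/2})$ control of the centered process (condition R2); if you want to keep your two--region decomposition, you should import that argument rather than Lemma~\ref{lemma:taylor:fixed:d}.
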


To complete the connection between Theorem \ref{thm:1} and Corollary \ref{corol:1}, note that Assumption \ref{cond:3} implies that Assumption \ref{cond:prior} in Section \ref{sec_general_thm}  is verified with \smash{$\log \pi^{(1)} = (\partial/\partial \theta ) \log \pi(\theta)_{| \theta = \theta_*}$}. Refer to the Supplementary Materials for the proofs of Lemma~\ref{lemma:taylor:fixed:d} and Lemma \ref{lemma:post:contr}.
 
 %%%%%%%%%%%%%%%%%%%%%%%%%%%%%%%%%%%%%%%%%%%%%%
%%%%%%%%%%%%%%%%%%%%%%%%%%%%%%%%%%%%%%%%%%%%%%

\subsubsection{Sufficient conditions for Assumption \ref{cond:4}}\label{sec_logp1}
Lemma \ref{lemma:post:contr} requires the fulfillment of Assumption \ref{cond:4} which allows precise control on the behavior of the log--likelihood ratio outside the set $K_n$. In addition, Assumption \ref{cond:4} further plays a crucial role in the development of the practical skew--modal approximation presented in Section \ref{sec_3}. Given the relevance of such an assumption, Lemma \ref{lemma:cond4} provides a set of natural and verifiable  sufficient conditions that guarantee its validity; see the Supplementary Materials for the proof of  Lemma \ref{lemma:cond4}.
      
\begin{lemma}\label{lemma:cond4}
	Suppose that Assumptions \ref{cond:uni} and \ref{cond:2} hold, and that for every $\delta > 0$ there exists a positive constant $c_{\delta}$ such that 
	\begin{eqnarray} \label{lemma:5:cond:1} 
	\textstyle	\lim_{n \to \infty } P_0^{n} \lbrace \sup_{\|\theta - \theta_*\|> \delta}\{(\ell(\theta) - \ell(\theta_*))/n  \} < - c_{\delta} \rbrace = 1.
	\end{eqnarray}
	If there exist $\bar{n} \in \mathbbm{N}$ and $\delta_1>0$ such that, for all $n > \bar n$, it holds
	\begin{itemize}
		\item R1)  \label{cond:lemma5:1} $ \E_{0}^n \{ \ell(\theta) - \ell(\theta_*)  \} / n $ is concave in $\{ \theta \, : \, \| \theta - \theta_* \|< \delta_1 \}$, two times differentiable at $\theta_*$ with negative Hessian equal to the expected Fisher information matrix 
		$I_{\theta_*}/n,$ 
		\vspace{-7pt}
		\item R2) \label{cond:lemma5:2} and
		$\sup_{0<\|\theta - \theta_*\| < \delta_1} [ \{ \ell(\theta) - \ell(\theta_*)\} - \E_{0}^n \{ \ell(\theta) - \ell(\theta_*)  \}  ]/(n\|\theta - \theta_*\|)    =  O_{P_0^n}(n^{-1/2}),$	
	\end{itemize}
	then there is a constant $c_5>0$ such that 
	$  \lim_{n \to \infty } P_0^{n} \lbrace \sup_{\|\theta - \theta_*\|> M_n/\sqrt{n}}\{ (\ell(\theta) - \ell(\theta_*))/n  \} < - c_5 M_n^2/n \rbrace = 1$,  for any $M_n \to \infty$.
\end{lemma}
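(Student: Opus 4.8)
The plan is to split the region $\{\|\theta-\theta_*\|>M_n/\sqrt n\}$ into a far part $\{\|\theta-\theta_*\|\ge\delta\}$ and a near part $\{M_n/\sqrt n<\|\theta-\theta_*\|<\delta\}$, for some fixed $\delta\in(0,\delta_1)$ to be chosen small, and to bound $\sup(\ell(\theta)-\ell(\theta_*))/n$ over each part on an event of $P_0^n$--probability $1-o(1)$. The far part is immediate: since the relevant regime has $M_n\to\infty$ with $M_n=o(\sqrt n)$, one has $M_n^2/n\to0$, so condition \eqref{lemma:5:cond:1} gives, with probability tending to one, $\sup_{\|\theta-\theta_*\|\ge\delta}(\ell(\theta)-\ell(\theta_*))/n<-c_\delta<-c_5M_n^2/n$ for every fixed $c_5>0$ once $n$ is large.

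For the near part I would write $(\ell(\theta)-\ell(\theta_*))/n=g(\theta)+R_n(\theta)$, with $g(\theta):=\E_0^n\{\ell(\theta)-\ell(\theta_*)\}/n$ deterministic and $R_n$ the fluctuation, for which R2 provides $\sup_{0<\|\theta-\theta_*\|<\delta_1}|R_n(\theta)|/\|\theta-\theta_*\|\le c_Rn^{-1/2}$ with probability $1-o(1)$, for a suitably large constant $c_R$. To control $g$, note that by Assumption \ref{cond:uni} the point $\theta_*$ is the unique maximiser of $\theta\mapsto\E_0^n\ell(\theta)$ and hence of $g$; being an interior maximiser of the concave (by R1) differentiable function $g$ forces $\nabla g(\theta_*)=0$, so a second--order Taylor expansion, combined with R1 and the eigenvalue bound $\lambda_{\textsc{min}}(I_{\theta_*}/n)>\eta_1$ of Assumption \ref{cond:2}, yields $g(\theta)\le-(1/2)(\theta-\theta_*)^{\intercal}(I_{\theta_*}/n)(\theta-\theta_*)+o(\|\theta-\theta_*\|^2)\le-(\eta_1/4)\|\theta-\theta_*\|^2$ on a ball of radius $\delta'>0$. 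At radius $\delta$ itself, combining \eqref{lemma:5:cond:1} with R2, and recalling that $g$ is non--random so an inequality holding on an event of positive probability holds identically, gives $g(\theta)\le-c_\delta/2$ for $\|\theta-\theta_*\|=\delta$ once $n$ is large; concavity of $g$ then propagates these two bounds to $g(\theta)\le-(\eta_1/4)\|\theta-\theta_*\|^2$ for $\|\theta-\theta_*\|\le\delta'$ and $g(\theta)\le-c_6\|\theta-\theta_*\|$ for $\delta'<\|\theta-\theta_*\|<\delta$, with $c_6>0$ fixed.

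Combining these on the near part: for $M_n/\sqrt n<\|\theta-\theta_*\|\le\delta'$ we get $(\ell(\theta)-\ell(\theta_*))/n\le-(\eta_1/4)\|\theta-\theta_*\|^2+c_Rn^{-1/2}\|\theta-\theta_*\|$, and since $\|\theta-\theta_*\|>M_n/\sqrt n$ with $M_n\to\infty$ the second term is eventually dominated by the first, leaving $(\ell(\theta)-\ell(\theta_*))/n\le-(\eta_1/8)\|\theta-\theta_*\|^2\le-(\eta_1/8)M_n^2/n$; for $\delta'<\|\theta-\theta_*\|<\delta$ the drift $g$ lies below a fixed negative constant while $R_n$ vanishes, so the sum is below $-c_5M_n^2/n$ for $n$ large. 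Intersecting with the far--part event and taking $c_5=\eta_1/8$ completes the argument. I expect the near--region analysis of the deterministic drift $g$ to be the main obstacle: R1 only describes $g$ infinitesimally at $\theta_*$, whereas \eqref{lemma:5:cond:1} lives at a fixed radius $\delta$, so bridging these two scales uniformly in $n$ is precisely what forces the joint use of concavity of $g$ and the strict positive--definiteness of $I_{\theta_*}/n$ (which rules out $g$ being flat near $\theta_*$); a secondary technical point is verifying that the radius $\delta'$ of validity of the local quadratic bound does not shrink faster than $M_n/\sqrt n$, which holds for regular parametric models.
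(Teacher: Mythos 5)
Your proof is correct and follows essentially the same route as the paper's: split at a fixed radius, decompose $(\ell(\theta)-\ell(\theta_*))/n$ into its expectation and a centered fluctuation, bound the deterministic drift quadratically using R1, concavity and the eigenvalue bound on $I_{\theta_*}/n$ from Assumption~\ref{cond:2}, and use R2 to show the fluctuation, which is linear in $\|\theta-\theta_*\|$ with an $O_{P_0^n}(n^{-1/2})$ factor, is dominated by the drift once $\|\theta-\theta_*\|>M_n/\sqrt{n}$. The only cosmetic differences are that the paper controls the fluctuation event with a slowly diverging sequence $\tilde M_n$ rather than a fixed large constant (an equivalent $\epsilon$--argument), and states one quadratic drift bound on the whole ball of radius $\delta_1$, whereas your additional use of \eqref{lemma:5:cond:1} at radius $\delta$ is redundant since concavity alone propagates the local quadratic bound outward to a linear one on the annulus.
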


As highlighted at the beginning of Section~\ref{sec_fixd_thm}, condition \eqref{lemma:5:cond:1} is mild and can be~found~both in classical  \citep[e.g.,][]{lehmann2006theory}  and modern \citep{koers2023}  Bernstein--von Mises type results. Condition R1 requires the expected log--likelihood to be sufficiently regular within a neighborhood of $\theta_*$ and is closely related to standard~assumptions on M--estimators \citep[e.g.,][Ch. 5]{van2000asymptotic}. Finally, among the assumptions of Lemma \ref{lemma:cond4}, R2 is arguably the most specific. It requires that, for all $\theta$ such that $0<\|\theta-\theta_*\|<\delta_1$, the quantity $ [ \{ \ell(\theta) - \ell(\theta_*)  \} - \E_{0}^n \{ \ell(\theta) - \ell(\theta_*)  \}  ]/(n\|\theta - \theta_{*}\|)$ converges uniformly to zero in probability with rate $n^{-1/2}$. This type of behavior is common in several routinely--implemented statistical models, such as generalized linear models.   

 %%%%%%%%%%%%%%%%%%%%%%%%%%%%%%%%%%%%%%%%%%%%%%
%%%%%%%%%%%%%%%%%%%%%%%%%%%%%%%%%%%%%%%%%%%%%%

\section{Empirical results} \label{sec_24}
Sections~\ref{sec_241}--\ref{sec_242} provide empirical evidence of the improved accuracy achieved by the \textsc{sks} approximation~in~Corollary \ref{corol:1} (\textsc{s--BvM}), relative to its Gaussian counterpart (\textsc{BvM}) arising from the classical Bernstein--von Mises theorem~in regular parametric models. We study both correctly--specified and misspecified~settings, and focus not only on assessing the superior performance of the \textsc{sks} approximation, with~$F(\cdot)=\Phi(\cdot)$, but~also on quantifying whether the remarkable improvements encoded in the rates we~derived under asymptotic arguments find empirical evidence also in finite--sample studies.~To this~end,  \textsc{s--BvM} and \textsc{BvM} are compared both in terms of \textsc{tv} distance from the target posterior and also with respect to the absolute error in approximating the posterior mean. These two measures illustrate the practical implications of the~rates~derived in Equation \eqref{corol1:out1} and \eqref{corol1:out2}, respectively. Since for the illustrative studies in Sections~\ref{sec_241} and \ref{sec_242} the target posterior can be derived in closed form, the \textsc{tv} distances $ \textsc{tv}^{n}_{\textsc{BvM}} =(1/2)\int_{\mathbbm{R}}|\pi_n(h)-p^{n}_{\textsc{gauss}}(h)| dh$ and $\textsc{tv}^{n}_{\textsc{s--BvM}}=(1/2)\int_{\mathbbm{R}}|\pi_n(h)-p^{n}_{\textsc{sks}}(h)| dh$ can be evaluated numerically, for every $n$, via standard routines in $\texttt{R}$. The same holds for the  errors in posterior mean approximation $ \textsc{fmae}^{n}_{\textsc{BvM}} =|\int_{\mathbbm{R}} h \{ \pi_n(h)-p^{n}_{\textsc{gauss}}(h)\}dh|$ and $\textsc{fmae}^{n}_{\textsc{s--BvM}}=|\int_{\mathbbm{R}} h\{\pi_n(h)-p^{n}_{\textsc{sks}}(h)\}dh|$. 

Note that, as for other versions of the classical Bernstein--von Mises theorem, also our~theoretical results in~Sections~\ref{sec_general_thm} and \ref{sec_fixd_thm} require knowledge of the Kullback--Leibler minimizer between the true data--generating process and the parametric family $\mathcal{P}_{\Theta}$. Since $\theta_*$ is  clearly unknown in practice, in Section~\ref{sec_3} we address this aspect via a practical plug--in version of the \textsc{sks} approximation in~Corollary \ref{corol:1}, which replaces  $\theta_*$ with its maximum  a posteriori estimate. This yields a readily--applicable skew--modal approximation with similar theoretical and empirical support; see the additional simulations and real--data applications in~Section~\ref{sec_32}.

 %%%%%%%%%%%%%%%%%%%%%%%%%%%%%%%%%%%%%%%%%%%%%%
%%%%%%%%%%%%%%%%%%%%%%%%%%%%%%%%%%%%%%%%%%%%%%
\vspace{-8pt}
\subsection{Exponential model} \label{sec_241}
Let $X_i\stackrel{iid}{\sim}\textsc{exp}(\theta_0)$, for $i=1, \ldots, n$, where $\textsc{exp}(\theta_0)$ denotes the exponential distribution with rate parameter $\theta_0 = 2.$ In the following, we consider a correctly specified model having exponential likelihood and a $\textsc{exp}(1)$ prior for $\theta$.  To obtain the skew--symmetric approximation for the posterior distribution induced by such a Bayesian model, let us first  verify that all conditions of Corollary \ref{corol:1} hold.

To address this goal first note that, as the model is correctly specified, Assumption~\ref{cond:uni}~holds with $\theta_* = \theta_0$. The first four derivates of the log--likelihood at $\theta$ are  \smash{$n/\theta - \sum_{i=1}^n x_i,$} \smash{$ -n/\theta^2$},  \smash{$ 2n/\theta^3$} and \smash{$ -6n/\theta^4$}, respectively. Hence, Assumptions \ref{cond:1}--\ref{cond:2} are both satisfied, even around a small neighborhood of $\theta_0$. Assumption \ref{cond:3} is met by a broad class of routinely--implemented priors. For example, $\textsc{exp}(1)$ can be considered in such a case. Finally, we need to check Assumption \ref{cond:4}. To this end, note that $\{\ell(\theta) - \ell(\theta_0)\}/n = \log \theta/\theta_0 + (\theta_0 - \theta)\sum_{i = 1}^n x_i/n$ which, by the law of large number, converges in probability to a negative constant for every fixed $\theta$ implying \eqref{lemma:5:cond:1}. Additionally, $\E_{0}^n \{\ell(\theta) - \ell(\theta_0)\}/n = \log \theta/\theta_0 + (1 - \theta/\theta_{0})$ is concave in $\theta$ and, therefore, it fulfills condition R1 of Lemma \ref{lemma:cond4}. Since $[\{\ell(\theta) - \ell(\theta_0)\} -\E_{0}^n \{\ell(\theta) - \ell(\theta_0)\}]/n = (\theta_0 - \theta)(\sum_{i = 1}^n x_i/n - 1/\theta_0)$ also condition R2 in Lemma \ref{lemma:cond4} is satisfied and, as a consequence, also Assumption \ref{cond:4}. 

\begin{table}[t] 
	\renewcommand{\arraystretch}{1}
	\centering
	\caption{\footnotesize Empirical comparison, averaged over $50$ replicated studies, between the  classical (\textsc{BvM}) and skewed (\textsc{s--BvM}) Bernstein--von Mises theorem in the correctly--specified exponential example. The first table shows, for different sample sizes from $n=10$ to $n=1500$, the  log--\textsc{tv} distances (\textsc{tv}) and log--approximation errors for the posterior mean  (\textsc{fmae}) under \textsc{BvM} and \textsc{s--BvM}. The bold values indicate the best performance for each $n$. The second table shows,  for $n$ from $n=10$ to $n=100$, the sample size $\bar{n}$ required by the classical Gaussian \textsc{BvM} to achieve the same \textsc{tv} and \textsc{fmae} attained by the proposed \textsc{sks} approximation with that $n$.} 
	\small
	\begin{adjustbox}{max width=\textwidth}
		\begin{tabular}{lrrrrrr}
			\hline
			\quad & $n=10$  &  $n=50$  &   $n=100$ & $n=500$  &  $n=1000$ &  $n=1500$  \\  
			\hline
			$\log \textsc{tv}^n_{\textsc{BvM}}$ \quad & $-1.67$& $-2.50$& $-2.82$& $-3.59$& $-3.98$& $-4.18$\\ 
			$\log \textsc{tv}^n_{\textsc{s--BvM}}$ \quad  &$\bf -2.53$ & $ \bf -3.86$& $\bf -4.41$& $\bf -5.76$& $\bf -5.58$&$\bf-6.58 $\\
			\hline
			$\log \textsc{fmae}^n_{\textsc{BvM}}$ \quad &$-0.90$& $-1.77$& $-1.97$& $-2.85$& $-3.21$& $-3.33$ \\ 
			$\log \textsc{fmae}^n_{\textsc{s--BvM}}$ \quad  &$\bf -1.07$ &$\bf -2.81$& $\bf -3.74$ &$\bf -6.14$ &$ \bf -7.09$ & $\bf -7.42$  \\ 
		\hline
		\end{tabular}
	\end{adjustbox}
	\vspace{-6pt}
	\label{tab}
\end{table}
\begin{table}[t]
	\centering
	\begin{tabular}{lrrrrrrr}
		\hline
		&  $n=10$ & $n=15$ & $n=20$& $n=25$ & $n=50$ & $n=75$ & $n=100$ \\ 
		\hline
	$\bar{n}: \ \textsc{tv}^{\bar{n}}_{\textsc{BvM}}=\textsc{tv}^{n}_{\textsc{s--BvM}} $& 55 & 120 & 250 & 350 & 820 & 1690 & 2470 \\ 
	$\bar{n}: \ \textsc{fmae}^{\bar{n}}_{\textsc{BvM}}=\textsc{fmae}^{n}_{\textsc{s--BvM}} $  & 15 & 25 & 70 & 110 & 380 & 1050 & 2280 \\ 
		\hline
	\end{tabular}
	\vspace{-4pt}
\end{table}

The above results ensure that Corollary \ref{corol:1} holds and can be leveraged to derive the~parameters of the \textsc{sks} approximation in \eqref{limiting:distribution} under this exponential example. To this end, first notice that, since the prior distribution is an  $\textsc{exp}(1)$, then  \smash{$\log \pi^{(1)}_{\theta_0}=-1$}. As a consequence, $\xi=\theta^2_0( n/\theta_0 - \sum_{i=1}^n x_i -1)/\sqrt{n}$ and \smash{$\Omega=1/(\theta_0^{-2}- 2{\theta_0^{-1}} \lbrace 1/\theta_{0} -(\sum_{i=1}^n x_i)/n -1/n \rbrace)$}. For what concerns the skewing factor, we choose $F(\cdot) = \Phi(\cdot)$ which implies a cubic function equal to \smash{$\alpha_{\eta}(h-\xi)=\smash{\{\sqrt{2\pi}/(6 \sqrt{n} \theta_0^3)\}\{(h-\xi)^3+3(h-\xi)\xi^2 \}}$}.

Table~\ref{tab} compares the accuracy of the skew--symmetric (\textsc{s--BvM}) and the Gaussian (\textsc{BvM}) approximations corresponding to the newly--derived and classical Bernstein--von Mises theorems, respectively, under growing sample size and replicated experiments. More specifically, we consider 50 different simulated datasets with $\theta_0 = 2$ and sample size $n_{\textsc{tot}}=1500$. Then, within each of these 50 experiments, we derive the target posterior under several subsets of data $x_1, \ldots, x_{n}$ with a growing sample size $n$, and then compare the accuracy of the two approximations under the $\textsc{tv}$ and $\textsc{fmae}$ measures discussed at the beginning of Section~\ref{sec_24}. The first part of Table~\ref{tab} displays, for each $n$, these two measures averaged across the 50 replicated experiments under both \textsc{s--BvM} and \textsc{BvM}. The empirical results confirm that the \textsc{sks} approximation yields remarkable accuracy improvements over the Gaussian counterpart for any $n$. Such an  empirical finding  clarifies that the $\sqrt{n}$ improvement encoded in the rates we derive, is visible also in finite, even small, sample size settings. This suggests that the theory in Sections~\ref{sec_general_thm}--\ref{sec_fixd_thm}  is informative also in practice, while motivating the adoption of the \textsc{sks}  approximation in place of the Gaussian one. Such a result is further strengthened in the second part of Table~\ref{tab}, which shows that to attain the same accuracy achieved by the proposed  \textsc{sks}  approximation  with a given $n$, the classical Gaussian counterpart requires a sample size $\bar{n}$ higher by approximately one order of magnitude.

 %%%%%%%%%%%%%%%%%%%%%%%%%%%%%%%%%%%%%%%%%%%%%%
%%%%%%%%%%%%%%%%%%%%%%%%%%%%%%%%%%%%%%%%%%%%%%

\vspace{-5pt}

\subsection{Misspecified exponential model} \label{sec_242}
Section~\ref{sec_242} deals with a correctly--specified model where $P_0^n \in \mathcal{P}_{\Theta}$. Since  Corollary \ref{corol:1} holds even when the model $\mathcal{P}_{\Theta}$ is misspecified, it is of interest to compare the accuracy of the proposed \textsc{sks} approximation and the Gaussian one also within this context. To this end, let us consider the case \smash{$X_i\stackrel{iid}{\sim}\textsc{L--Norm}(-1.5,1)$}, for $i=1, \ldots, n$, where $\textsc{L--Norm}(-1.5,1)$ denotes the log--normal distribution with parameters $\mu = -1.5 $ and $\sigma = 1$. As in Section~\ref{sec_241}, an exponential likelihood is assumed, parameterized by the rate parameter $\theta$, and the prior is $\textsc{exp}(1)$. In this misspecified context, the minimizer of the Kullback--Leibler divergence between the log--normal distribution and the family of exponential distributions is unique and equal to $\theta_* \approx 2.71$. Similarly to Section \ref{sec_241} one~can show that the conditions of Corollary \ref{corol:1} are still satisfied, thus allowing the derivation of the same \textsc{sks}  approximating density with parameters evaluated at   $\theta_*$ instead of $\theta_0$. 

The quality of the \textsc{s--BvM} and \textsc{BvM} approximations is studied under the same measures and settings considered in Section~\ref{sec_242}. The results reported in Table~\ref{tab01} of the Supplementary Materials are in line with those for the correctly--specified case in Table~\ref{tab}.

 %%%%%%%%%%%%%%%%%%%%%%%%%%%%%%%%%%%%%%%%%%%%%%
%%%%%%%%%%%%%%%%%%%%%%%%%%%%%%%%%%%%%%%%%%%%%%

\vspace{5pt}

\section{Skew--modal approximation} \label{sec_3}
As for standard theoretical derivations of Bernstein--von Mises type results, also our theory in Section~\ref{sec_2} studies approximating densities whose parameters depend on the minimizer $\theta_*$ of $\textsc{kl}(P_0^n || P_\theta^n)$ for $\theta \in \Theta$, which coincides with $\theta_0$ when the model is correctly--specified. Such a quantity is unknown in practice. Hence,~to provide an effective alternative to the classical Gaussian--modal approximation, which can be implemented in practical contexts, it is necessary to replace $\theta_*$ with a suitable estimate. To this end, in Section~\ref{sec_31} we consider a simple, yet effective, plug--in version of the \textsc{sks} density in Corollary \ref{corol:1} which replaces $\theta_*$ with its maximum a posteriori (\textsc{map}) estimator, without losing the theoretical accuracy guarantees. Note that, in general, $\theta_*$ can be replaced by any efficient estimator. However, by relying on the \textsc{map} several quantities simplify, giving raise to a highly tractable and accurate solution, which is named skew--modal approximation. When the focus is on approximating posterior marginals, Section~\ref{sec_marginal} further derives a theoretically supported, yet more scalable,  skew--modal approximation for such quantities.

 %%%%%%%%%%%%%%%%%%%%%%%%%%%%%%%%%%%%%%%%%%%%%%
%%%%%%%%%%%%%%%%%%%%%%%%%%%%%%%%%%%%%%%%%%%%%%
\vspace{-5pt}

\subsection{Skew--modal approximation and theoretical guarantees} \label{sec_31}
Consistent with the above discussion, we consider the plug--in version \smash{$\hat{p}^n_{\textsc{sks}}(\hat h)$  of $p^n_{\textsc{sks}}(h)$} in Equation~\eqref{limiting:distribution}, where the unknown $\theta_*$ is replaced by the \textsc{map} \smash{$\hat{\theta}=\mbox{argmax}_{\theta \in \Theta} \{\ell(\theta)+\log \pi(\theta)\}$}. This yields the skew--symmetric density, for the rescaled parameter \smash{$\hat{h}=\sqrt{n}(\theta-\hat{\theta})\in \mathbb{R}^d$}, defined as
\begin{eqnarray}\label{limiting:distribution:map}
	\hat{p}_{\textsc{sks}}^n( \hat{h} ) \, = \, 2 \phi_d ( \hat{h} ; 0, \hat{\Omega} ) \hat{w}( \hat{h} )=2 \phi_d ( \hat{h} ; 0, \hat{\Omega} )F(\hat{\alpha}_{\eta}(\hat h)),
\end{eqnarray}
where \smash{$\hat{\Omega}=(\hat{V}^n)^{-1}$ with $\hat{V}^n = [ \hat{v}_{st}^n ] = [ {j}_{\hat{\theta},s t}/n ]\in\mathbb{R}^{d\times d},$} while the skewing function entering the univariate cdf $F(\cdot)$ is defined as \smash{$\hat{\alpha}_{\eta}(\hat h) \, = \{1/(12 \eta \sqrt{n})\}  (\ell^{(3)}_{\hat \theta,stl}/n) \hat{h}_{s}\hat{h}_{t}\hat{h}_{l} \in \mathbb{R}$}. 

Relative to the expression for \smash{$p^n_{\textsc{sks}}(h)$} in Equation~ \eqref{limiting:distribution}, the location parameter $\hat \xi$ is zero in \eqref{limiting:distribution:map}, since \smash{$\hat{\xi}$} is a function of  the quantity \smash{$(\ell^{(1)}_{\hat{\theta}} +  \log \pi^{(1)}_{\hat{\theta}} )/\sqrt{n}$} which is zero by definition when \smash{$\hat{\theta}$} is the \textsc{map}. For the same reason, unlike its population version defined below \eqref{corol1:out1}, in the expression for the precision matrix of the Gaussian density factor in \eqref{limiting:distribution:map} the additional term including the third order derivative disappears. Therefore, approximation~\eqref{limiting:distribution:map} does not introduce additional complications in terms of positive--definiteness~and non--negativity of the precision matrix relative to those of the classical Gaussian--modal approximation.

Equation~\eqref{limiting:distribution:map} provides a practical skewed approximation of the exact posterior centered at the mode. For this reason, such a solution is referred to as skew--modal approximation.  In order to provide theoretical guarantees for this practical version, similar to those in Corollary \ref{corol:1}, while further refining these guarantees through novel non--asymptotic bounds,  let~us~introduce two  mild assumptions in addition to those outlined in Section~\ref {sec_fixd_thm}. 
\vspace{-3pt}

\begin{assumption} \label{M1} 
For every $M_n \to \infty$, the event  \smash{$\hat{A}_{n,0} =\{\| \hat{\theta} - \theta_* \|\leq M_n\sqrt{d}/\sqrt{n}\} $} satisfies   \smash{$P_{0}^n\big(\hat{A}_{n,0}\big) >1-\hat{\epsilon}_{n,0}$} for some sequence \smash{$\{\hat{\epsilon}_{n,0}\}_{n=1}^\infty$} converging to zero.
\end{assumption}
\vspace{-20pt}
\begin{assumption} \label{M2}  There exist two positive constants $\bar \eta_1,\bar \eta_2$ such that the event $\hat A_{n,1}=\smash{ \{ \lambda_{\textsc{min}}(\hat \Omega^{-1})> \bar \eta_1 \} \cap\{ \lambda_{\textsc{max}}(\hat \Omega^{-1})< \bar \eta_2 \}}$ holds with a probability \smash{$P_0^n \big(\hat A_{n,1} \big)> 1 -\hat{\epsilon}_{n,1} $} for a suitable sequence $\{\hat{\epsilon}_{n,1}\}_{n=1}^\infty$ converging to zero as $n \to \infty$. 
Moreover, there exist positive constants \smash{$\delta>0$} and $L_3>0$, $L_4>0$, \smash{$L_{\pi,2}>0$} such that, for  \smash{$ B_{\delta}(\hat{\theta}) := \{ \theta  \in \Theta \,:\, \| \hat \theta - \theta \| < \delta \}$}, the joint event $\hat{A}_{n,2}=\smash{\{\sup_{\theta \in B_{\delta}(\hat{\theta}) } 	\|\log \pi^{(2)}(\theta) \| < L_{\pi,2}\}\cap \{ \sup_{\theta \in B_{\delta}(\hat{\theta}) } \|\ell^{(3)}(\theta)/n\| }< L_3  \}\cap\smash{\{\sup_{\theta \in B_{\delta}(\hat{\theta}) } \| \ell^{(4)}(\theta)/n\|  < L_4 \}},$ holds with a probability $P_{0}^{n}\big(\hat{A}_{n,2}\big)>  1-\hat{\epsilon}_{n,2}$, for some suitable sequence $\{\hat{\epsilon}_{n,2}\}_{n=1}^\infty$ converging to zero, where $\|\cdot\|$ represents the spectral norm.
\end{assumption}
\vspace{-2pt}

Assumption \ref{M1} is  mild and holds generally in regular parametric problems. This assumption ensures us that the \textsc{map} is in a suitably--small neighborhood of $\theta_{*}$, where the centering took place in Corollary \ref{corol:1}. Condition \ref{M2} is a similar and arguably not stronger  version~of~the analytical assumptions for the Laplace method described in  \citet{Kadane1990}. Notice also that under  Assumption \ref{M1}, condition \ref{M2} replaces Assumptions \ref{cond:1}--\ref{cond:2}, and requires the upper bound to hold in a neighborhood of $\theta_{*}.$ These conditions ensure uniform control on the difference between the log--likelihood ratio and its third order Taylor's expansion. Based on these additional conditions we provide an asymptotic result for the skew--modal approximation in \eqref{limiting:distribution:map}, similar to Corollary \ref{corol:1}. The proof can be found in the Supplementary Materials and follows as a direct consequence of a more refined non--asymptotic bound we derive for \smash{$\| \Pi_n( \cdot ) - \hat{P}^n_{\textsc{sks}}(\cdot)  \|_{\textsc{tv}}$}; see Remark~\ref{rem_31}.

\begin{theorem} \label{thm:modal}
	Let $\hat{h}= \sqrt{n}(\theta - \hat{\theta})$, and define $M_n = \sqrt{c_0 \log n}$, with $c_0>0$. If Assumptions \ref{cond:uni}, \ref{cond:3}--\ref{cond:4},  and \ref{M1}--\ref{M2} are met, then the posterior for \smash{$\hat{h}$} satisfies
	\begin{eqnarray} \label{tot:variatindist:map}
		\| \Pi_n( \cdot ) - \hat{P}^n_{\textsc{sks}}(\cdot)  \|_{\textsc{tv}} = O_{P_{0}^n}\big(M_n^{c_8}/n\big), 
	\end{eqnarray}
	for some $c_8 >0 $, where $\hat{P}_{\textsc{sks}}^n(S) \, = \, \int_{S} \hat{p}_{\textsc{sks}}^n(\hat{h}) d\hat{h}$ for $S \subset \mathbbm{R}^{d}$ with $\hat{p}_{\textsc{sks}}^n(\hat{h})$ defined as in~\eqref{limiting:distribution:map}. In addition, let \smash{$G \, : \, \mathbbm{R}^d \to  \mathbbm{R} $} be a function satisfying \smash{$ | G(\hat h) | \lesssim \| \hat h\|^r $}. If the prior is such that $\int \|\hat h\|^r \pi(\hat \theta + \hat h/\sqrt{n}) d\hat h < \infty$
	then 
	\begin{eqnarray} \label{result:corol:functionals}
		\int  G(\hat h) | \pi_n(\hat h) - \hat{p}_{\textsc{sks}}^n(\hat h) | d\hat h = O_{P_0^n}(M_n^{c_8 + r}/n).
	\end{eqnarray}  
\end{theorem}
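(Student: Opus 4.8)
The plan is to reduce the statement to Corollary~\ref{corol:1} by controlling the cost of replacing the Kullback--Leibler minimizer $\theta_*$ by the \textsc{map} estimate $\hat\theta$. First I would observe that the skew--modal density $\hat p_{\textsc{sks}}^n(\hat h)$ in~\eqref{limiting:distribution:map} is exactly the density obtained by running the construction of Section~\ref{sec_21}, but expanding the log--posterior around $\hat\theta$ instead of $\theta_*$: since $\ell^{(1)}_{\hat\theta}+\log\pi^{(1)}_{\hat\theta}=0$ by the first--order optimality of the \textsc{map}, the location term $\hat\xi$ vanishes and the third--order correction to the precision matrix disappears, leaving $\hat\Omega^{-1}=[j_{\hat\theta,st}/n]$ and a purely cubic skewing function. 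Therefore the proof amounts to re-deriving the three-term triangle-inequality decomposition used in the proof of Theorem~\ref{thm:1} --- splitting $\|\Pi_n(\cdot)-\hat P_{\textsc{sks}}^n(\cdot)\|_{\textsc{tv}}$ into the posterior tail outside $\hat K_n=\{\hat h:\|\hat h\|<M_n\}$, the skew--modal tail outside $\hat K_n$, and the bulk discrepancy inside $\hat K_n$ --- but now centred at $\hat\theta$.

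Second I would verify that Assumptions~\ref{M1}--\ref{M2}, combined with \ref{cond:uni} and \ref{cond:3}--\ref{cond:4}, supply exactly the ingredients that Assumptions~\ref{cond:LAN}--\ref{cond:concentration} supplied in the population case. On the event $\hat A_{n,0}\cap\hat A_{n,1}\cap\hat A_{n,2}$ (which has $P_0^n$-probability $1-o(1)$ since each $\hat\epsilon_{n,j}\to 0$), the bounds on $\ell^{(3)}$, $\ell^{(4)}$ and $\log\pi^{(2)}$ on the ball $B_\delta(\hat\theta)$ give, via a fourth-order Taylor expansion of $\ell(\hat\theta+\hat h/\sqrt n)-\ell(\hat\theta)$ and a second-order expansion of the log--prior, a uniform remainder of order $M_n^4/n$ on $\hat K_n$ --- this is the analogue of Lemma~\ref{lemma:taylor:fixed:d}. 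The eigenvalue control $\bar\eta_1<\lambda_{\textsc{min}}(\hat\Omega^{-1})\le\lambda_{\textsc{max}}(\hat\Omega^{-1})<\bar\eta_2$ from \ref{M2} then plays the role of the event $A_{n,1}$ in the proof of Theorem~\ref{thm:1}, so that the Gaussian tail $\int_{\|\hat h\|>M_n}\phi_d(\hat h;0,\hat\Omega)\,d\hat h$ is $o_{P_0^n}(n^{-1})$ for $c_0$ large, and posterior contraction, i.e.\ $\Pi_n(\hat K_n^c)<n^{-D}$ with probability tending to one, follows from Assumption~\ref{cond:4} together with $\hat A_{n,0}$ exactly as in Lemma~\ref{lemma:post:contr} (the shift from $\theta_*$ to $\hat\theta$ costs only $O(M_n/\sqrt n)$, absorbed in $K_n$ by enlarging $c_0$). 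The bulk term is then handled by the same Jensen-plus-$e^x=1+x+e^{\beta x}x^2/2$ argument as in~\eqref{equ_13}--\eqref{upper:cond:Kn}, yielding $O_{P_0^n}(M_n^{c_8}/n)$ and hence~\eqref{tot:variatindist:map}. The functional bound~\eqref{result:corol:functionals} follows verbatim from the argument behind~\eqref{corol1:out2} in Corollary~\ref{corol:1}: split $\int G(\hat h)|\pi_n-\hat p_{\textsc{sks}}^n|$ over $\hat K_n$ and its complement, bound $|G|\lesssim\|\hat h\|^r\le M_n^r$ on $\hat K_n$, and control the two tail integrals using the posterior contraction estimate and the assumed finiteness of $\int\|\hat h\|^r\pi(\hat\theta+\hat h/\sqrt n)\,d\hat h$ together with Gaussian moment bounds.

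The main obstacle I anticipate is the bulk term inside $\hat K_n$: one must show that the ratio $\hat p_{\textsc{sks}}^n(\hat h)/\hat p_{\textsc{sks}}^n(\hat h')$ reproduces the likelihood-times-prior ratio up to a multiplicative factor $e^{r_{n,4}(\hat h')-r_{n,4}(\hat h)}$ with $\sup_{\hat K_n}|r_{n,4}|=O_{P_0^n}(M_n^{c}/n)$, and this requires carefully tracking how the \emph{skew-symmetric perturbation} $F(\hat\alpha_\eta(\hat h))$ matches the exponentiated cubic term. Concretely, one uses $\exp(x)=1+x+O(x^2)$ and $2F(x)=1+2\eta x+O(x^2)$ as $x\to 0$ with $x=\hat\alpha_\eta(\hat h)$ of order $O_{P_0^n}(M_n^3/\sqrt n)$ on $\hat K_n$; since $M_n=\sqrt{c_0\log n}$ this is $O_{P_0^n}(n^{-1/2}\mathrm{poly}\log n)$, so the $O(x^2)$ discrepancy is $O_{P_0^n}(M_n^6/n)$ --- but one must check that the odd-function structure of $\hat\alpha_\eta$ (guaranteed because, centred at the \textsc{map}, the cubic has no $3(\hat h-\hat\xi)_s\hat\xi_t\hat\xi_l$ cross-term) is what lets the leading-order match go through, exactly as in Section~\ref{sec_21}. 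Closing the gap between ``Taylor expansion of $\log\pi_n$'' and ``genuine \textsc{sks} density'' --- rather than merely between a second- and third-order expansion --- is the subtle point, and it is where the non-asymptotic refinement alluded to in Remark~\ref{rem_31} does the real work.
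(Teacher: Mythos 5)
Your proposal is correct and follows essentially the same route as the paper: the paper proves Theorem~\ref{thm:modal} via the non--asymptotic bound of Theorem~\ref{thm:modal:fs}, whose proof is exactly your three--term triangle--inequality decomposition over a ball centred at $\hat\theta$ (radius $2\sqrt{d}M_n$ rather than an enlarged $c_0$, but this is immaterial for fixed $d$), with Lemma~\ref{lemma:modal:taylor} playing the role of your MAP--centred Taylor/$F$--matching step yielding $\sup_{\hat K_n}|\hat r_{n,4}|\lesssim M_n^6/n$, Lemma~\ref{lemma:post:contr:map} giving posterior contraction about the \textsc{map} from Assumption~\ref{cond:4} plus MAP optimality, Lemma~\ref{lemma:gauss:contr:map} controlling the Gaussian tail, and the same Jensen--plus--$|1-e^x|\leq|x|+e^{x}x^2/2$ bulk argument and $\hat K_n$/$\hat K_n^c$ split for the functional bound. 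The only difference is presentational: the paper tracks constants and dimension dependence explicitly to obtain the bound of Remark~\ref{rem_31}, which you correctly identify as the refinement rather than a separate ingredient needed for the asymptotic statement.
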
 
\begin{remark}\label{rem_31}
As discussed above, Theorem \ref{thm:modal} follows directly from a more refined non--asymptotic upper bound that we derive for \smash{$\| \Pi_n( \cdot ) - \hat{P}^n_{\textsc{sks}}(\cdot)  \|_{\textsc{tv}}$} in Section \ref{sec:proof:finite:sample} of the~Supplementary Materials. In particular, as in recently--derived non--asymptotic results for the Gaussian Laplace approximation \citep[e.g.,][]{spok2021,spokoiny2023inexact}, it is possible to keep track of the constants and the dimension dependence also within our derivations, to show that on an event with high probability (approaching $1$), it holds
\begin{eqnarray}
\| \Pi_n( \cdot ) - \hat{P}^n_{\textsc{sks}}(\cdot)  \|_{\textsc{tv}} \leq C M_n^{c_8}d^3/n,
\label{non_as_bound_sks_post}
\end{eqnarray}
for some constant $C>0$ not depending on $d$ and $n$, see Theorem \ref{thm:modal:fs}. As a consequence,~the rates in \eqref{tot:variatindist:map} follow directly from \eqref{non_as_bound_sks_post}, when keeping $d$ fixed and letting $n \to \infty$. More~importantly, the above bound vanishes also when the dimension $d$ grows with $n$, as long~as $d \ll n^{1/3}$ up to a poly--log~term. Although our original focus is not specific to these high--dimensional regimes,~it~shall be emphasized that such a growth for $d$ is interestingly in line with those required either for $d$ \citep[e.g.,][]{panov2015finite} or for the notion of effective dimension \smash{$\tilde{d}$}  \citep[][]{spok2021,spokoiny2023inexact} in recent high--dimensional studies of the~Gaussian Bernstein--von Mises theorem and the Laplace approximation. However, unlike the bounds derived in these studies, the one in \eqref{non_as_bound_sks_post} decays to zero with $n$, up to a poly--log term, rather than $\sqrt{n}$, for any given dimensions.
\end{remark}

\begin{remark}\label{rem_4}
Similarly to Remark \ref{rem_0} our proofs can be easily modified to show that the \textsc{tv} distance between the posterior and the classical Gaussian Laplace approximation~is, up to a poly--log term, of order $1/\sqrt{n}$. This is a substantially worse upper bound than those derived for the skew--modal approximation. Theorem \ref{thm:modal:gauss:lower}  in the Supplementary Materials further refines such a result by proving that, up to a poly--log term, this upper bound~is sharp, whenever the posterior displays local asymmetries; see condition \eqref{cond:asym}. More specifically, under \eqref{cond:asym}, we prove that, on an event with high probability (approaching $1$), the  \textsc{tv} distance between the posterior and the classical Laplace approximation (\textsc{gm}) is bounded from below by $C_d/\sqrt{n}+ O(M_n^{c_8}d^3/n)$,~for some constant $C_d>0$, possibly depending on  $d$. Crucially, the proof of this lower bound implies that \smash{$\| \Pi_n( \cdot ) - \hat{P}^n_{\textsc{gm}}(\cdot)  \|_{\textsc{tv}}-\| \Pi_n( \cdot ) - \hat{P}^n_{\textsc{sks}}(\cdot)  \|_{\textsc{tv}}$} is~also bounded from below by $C_d/\sqrt{n}+ O(M_n^{c_8}d^3/n)$. This result strengthens \eqref{tot:variatindist:map}--\eqref{non_as_bound_sks_post}.
\end{remark}

\begin{remark}\label{rem_2}
	Since the \textsc{tv} distance is invariant with respect to scale and location transformations, the above results can be stated also for the original parametrization $\theta$ of interest. Focusing, in particular, on the choice $F(\cdot) = \Phi(\cdot)$, this yields the density
	\begin{eqnarray} \label{post:theta:map}
		\hat{p}^n_{\textsc{sks}}(\theta) = 2 \phi_d(\theta; \hat{\theta}, J_{\hat{\theta}}^{-1}) \Phi( (\sqrt{2 \pi}/12) \ell^{(3)}_{\hat{\theta},stl} (\theta - \hat{\theta})_s  (\theta - \hat{\theta})_t  (\theta - \hat{\theta})_l), 
	\end{eqnarray}
which coincides with that of the well--studied sub--class of generalized skew--normal (\textsc{gsn}) distributions \citep{ma2004flexible} and is guaranteed to approximate the posterior density for $\theta$ with the rate derived in Theorem \ref{thm:modal}.
\end{remark}

Our novel skew--modal approximation provides, therefore, a similarly tractable, yet substantially more accurate, alternative to the classical Gaussian from the Laplace method. This is because, as discussed in Section~\ref{sec_2}, the closed--form skew--modal density can be~evaluated at a similar computational cost as the Gaussian one, when $d$ is not too large. Furthermore, it admits a straightforward i.i.d. sampling scheme that facilitates Monte Carlo estimation of any functional of interest. Recalling Section~\ref{sec_2}, such a scheme simply relies on sign perturbations of samples from a $d$--variate Gaussian and, hence, can be implemented via standard \texttt{R} packages for simulating from these~variables. Note that, although the non--asymptotic bound in \eqref{non_as_bound_sks_post} can be also derived for the theoretical skew--symmetric approximations  in Section~\ref{sec_2}, the focus on the skew--modal is motivated by the fact that such an approximation provides the solution implemented in practice. Section~\ref{sec_marginal} derives and studies an even more scalable, yet similarly--accurate, approximation when the focus is on posterior marginals.  
 %%%%%%%%%%%%%%%%%%%%%%%%%%%%%%%%%%%%%%%%%%%%%%
%%%%%%%%%%%%%%%%%%%%%%%%%%%%%%%%%%%%%%%%%%%%%%

\subsection{Marginal skew--modal approximation and theoretical guarantees} \label{sec_marginal}
The skew--modal approximation  in Section \ref{sec_31} targets the joint posterior. In practice, the marginals of such a posterior are often the main object of interest \citep{inla_paper}. For studying these quantities,  it is possible to simulate i.i.d. values from the joint skew--modal approximation in~\eqref{limiting:distribution:map}, leveraging the sampling strategy discussed in Section \ref{sec_2}, and then retain only  samples from the marginals of direct interest. This requires, however, multiple evaluations of the cubic function in the skewness--inducing factor. In the following, we derive a closed--form skew--modal approximation for posterior marginals that mitigates this scalability issue.

To address the above goal,  denote with $\cc \subseteq \{1,\dots, d\}$ the set containing the indexes for the elements of $\theta$ on which we are interested in. Let $d_\cc$ be the cardinality of $\cc$, and~$\bar \cc = \cc^c$ the complement of $\cc$. Finally, write \smash{$\hat{h} = (\hat{h}_{\cc},\hat{h}_{ \bar\cc} )$}. Accordingly, the corresponding matrix~$\smash{\hat \Omega} = (J_{\hat \theta}/n)^{-1}$ can be partitioned in two diagonal blocks \smash{$\hat \Omega_{\cc\cc} $, $\hat \Omega_{ \bar \cc \bar \cc}  $,} and an off--diagonal one \smash{$\hat \Omega_{\bar \cc\cc} $}.

Under the regularity conditions stated in Section \ref{sec_31}, it is possible to write, for $n \to \infty$,
\begin{eqnarray*} 
	\pi_n(\hat \theta + \hat h/\sqrt{n}) \propto \exp( - j_{\hat \theta, st} \hat h_s \hat h_t/(2n) + (\ell_{\hat \theta, stl}^{(3)}/n)  \hat h_s \hat h_t \hat h_l /(6\sqrt{n}))  + O_{P_0^n}(n^{-1}).
\end{eqnarray*}  
The second order term in the above expression is proportional to the kernel of a Gaussian and, therefore, can be decomposed as 
\vspace{-3pt}
\begin{eqnarray*} 
\exp( - j_{\hat \theta, st} \hat h_s \hat h_t/(2n)) \propto \phi_d(\hat h; 0 , \hat \Omega ) =  \phi_{d_\cc}( \hat h_{\cc}; 0, \hat \Omega_{\cc\cc} ) \phi_{d - d_\cc}(\hat h_{\bar \cc};  \Lambda_{\cc} \hat h_{\cc}  , \bar \Omega), 
\end{eqnarray*}  
where~$\Lambda_{\cc} =  \smash{\hat \Omega_{\bar \cc\cc} \hat \Omega_{\cc\cc}^{-1}}$ and $ \bar \Omega =  \hat \Omega_{\bar \cc\bar \cc} -  \hat \Omega_{\bar \cc \cc} \hat \Omega_{\cc\cc}^{-1} \hat \Omega_{ \cc \bar \cc} $.

To obtain a marginal skew--modal approximation, let us leverage again the fact that~the~third order term converges to zero in probability, and that $e^x = 1 + x + O(x^2),$ for $x \to 0$. With these results, an approximation for the posterior marginal of \smash{$\hat h_{\cc}$} is, therefore, proportional~to
\begin{eqnarray}
\begin{split}
 \label{taylor:marginal}
	&\int \phi_{d_\cc}( \hat h_{\cc}; 0, \hat \Omega_{\cc\cc} ) \phi_{d - d_\cc}(\hat  h_{\bar \cc};  \Lambda_{\cc} \hat  h_{\cc}  , \bar \Omega)(1 +  (\ell_{\hat \theta, stl}^{(3)}/n)   \hat h_s \hat h_t \hat h_l /(6\sqrt{n}) ) d \hat  h_{\bar \cc}\\
	&\smash{= \phi_{d_\cc}( \hat  h_{\cc}; 0, \hat \Omega_{\cc\cc} ) [1+\{ (1/n)/(6\sqrt{n})\} \E_{\hat h_{\bar \cc }| \hat h_{\cc}}(\ell_{\hat \theta, stl}^{(3)} \hat h_s \hat h_t \hat h_l )]},
	\end{split}
\end{eqnarray}
where $\E_{\hat h_{\bar \cc }| \hat h_{\cc}}$ denotes the expectation with respect to $\phi_{d - d_\cc}(\hat h_{\bar \cc};  \Lambda_{\cc} \hat h_{\cc}  , \bar \Omega)$. Leveraging basic properties of the expected value, the term \smash{$ \E_{\hat h_{\bar \cc }| \hat h_{\cc}}(\ell_{\hat \theta, stl}^{(3)} \hat h_s \hat h_t \hat h_l )$} can be further decomposed as
\begin{eqnarray}
\qquad \ \ell_{\hat \theta, stl}^{(3)}  \hat h_{s} \hat h_{t} \hat h_{l} + 3 \ell_{\hat \theta, st r}^{(3)}  \hat h_{s} \hat h_{t}  \E_{\hat h_{\bar \cc }| \hat h_{\cc}}(\hat h_{r}) + 3 \ell_{\hat \theta, s r v }^{(3)}  \hat h_{s} \E_{\hat h_{\bar \cc }| \hat h_{\cc}}(\hat h_{r} \hat h_{v})  + \ell_{\hat \theta, r v k}^{(3)}  \E_{\hat h_{\bar \cc }| \hat h_{\cc}}(\hat h_{r} \hat h_{v} \hat h_{k}),
\label{eq_sum_margin}
\end{eqnarray}
with $s,t,l \in~\cc$ and $r,v,k \in \bar \cc$. Therefore, the above expected values simply require the first three non--central moments of the multivariate Gaussian having density \smash{$\phi_{d - d_\cc}(\hat h_{\bar \cc};  \Lambda_{\cc} \hat h_{\cc}  , \bar \Omega)$}. These are $\E_{\hat h_{\bar \cc }| \hat h_{\cc}}(\hat h_{r})= \Lambda_{\cc, r l}\hat{h}_l$, $\E_{\hat h_{\bar \cc }| \hat h_{\cc}}(\hat h_{r} \hat h_{v})= \bar{\Omega}_{rv}+ \Lambda_{\cc, r t} \Lambda_{\cc, v l}\hat{h}_t\hat{h}_l$ and $\E_{\hat h_{\bar \cc }| \hat h_{\cc}}(\hat h_{r} \hat h_{v} \hat h_{k})= 3\smash{\bar{\Omega}_{rv} \Lambda_{\cc, k s}\hat h_s+ \Lambda_{\cc, r s} \Lambda_{\cc, v t} \Lambda_{\cc, k l} \hat h_s  \hat h_t  \hat h_l}$. Hence, letting
\vspace{-5pt}
\begin{eqnarray}
\begin{split}
\nu^n_{1,s}&=3 \ell_{\hat \theta, srv}^{(3)} \bar{\Omega}_{rv}+3 \ell_{\hat \theta, rvk}^{(3)}\bar{\Omega}_{rv} \Lambda_{\cc, k s},  \\
\nu^n_{3,stl}&= \smash{\ell_{\hat \theta, stl}^{(3)} +3 \ell_{\hat \theta, str}^{(3)}\Lambda_{\cc, r l}+3 \ell_{\hat \theta, srv}^{(3)}\Lambda_{\cc, r t} \Lambda_{\cc, v l}+\ell_{\hat \theta, rvk}^{(3)}\Lambda_{\cc, r s} \Lambda_{\cc, v t} \Lambda_{\cc, k l}},
\label{nu1nu2}
\end{split}
\end{eqnarray}
the summation in \eqref{eq_sum_margin} can be written as $\nu^n_{1,s}\hat h_s+\nu^n_{3,stl} \hat h_s  \hat h_t  \hat h_l $, with $s,t,l \in~\cc$.  Replacing this quantity in \eqref{taylor:marginal}, yields \smash{$2 \phi_{d_\cc}( \hat h_{\cc}; 0, \hat \Omega_{\cc\cc} )( 1/2 + \eta \alpha_{\eta,\cc}(\hat h_{\cc}))$}, with
\begin{eqnarray} \label{def:param:ske:margin}
	 \alpha_{\eta,\cc}( \hat h_{\cc}) = \{1/(12 \eta \sqrt{n})\}(1/n)(\nu_{1,s}^n \hat h_s  +  \nu_{3,stl}^n \hat h_s \hat h_t  \hat h_l).
\end{eqnarray} 
Therefore, by leveraging the reasoning as in Section~\ref{sec_21}, we can write
$2 \phi_{d_\cc}( \hat h_{\cc}; 0, \hat \Omega_{\cc\cc} )( 1/2 + \smash{\eta \alpha_{\eta,\cc}(\hat h_{\cc})) =  2 \phi_{d_\cc}( \hat h_{\cc}; 0, \hat \Omega_{\cc\cc} ) F( \alpha_{\eta,\cc}( \hat h_{\cc}) ) + O_{P_0^n}(n^{-1})},$
where $\eta \in \mathbbm{R}$, and $F(\cdot): \mathbbm{R} \to [0,1]$ is a univariate cdf satisfying $F(-x) = 1-F(x)$, $F(0) =  1/2$ and $F(x) = F(0) + \eta x + O(x^2).$  As a result, the posterior marginal density of \smash{$\hat h_{\cc}$} can be approximated~by 
\begin{eqnarray} \label{skew:marginal}
	\hat p_{\textsc{sks},\cc}^n(\hat h_{\cc})  = 	2 \phi_{d_\cc}( \hat h_{\cc}; 0, \hat \Omega_{\cc\cc} )w_{\cc}(\hat h_{\cc})=2 \phi_{d_\cc}( \hat h_{\cc}; 0, \hat \Omega_{\cc\cc} ) F( \alpha_{\eta,\cc}( \hat h_{\cc}) ).
\end{eqnarray}
 Note that $ \alpha_{\eta,\cc}( \hat h_{\cc}) $ in \eqref{def:param:ske:margin} is an odd polynomial of $ \hat h_{\cc}$, and that $ \alpha_{\eta,\cc}( \hat h_{\cc})=  \E_{\hat h_{\bar \cc } | \hat h_{\cc} }\{\hat{\alpha}_{\eta}(\hat h)\} $. 

Equation \eqref{skew:marginal} shows that, once the quantities defining \smash{$\hat p_{\textsc{sks},\cc}^n(\hat h_{\cc})$} are pre--computed, then the cost of inference under such an approximating density scales with $d_{\cc}$, and no more~with $d$. As a consequence, when the focus is on the univariate marginals, i.e., $d_{\cc}=1$, the computational gains over the joint approximation in  \eqref{limiting:distribution:map} can be substantial, and calculation of functionals can be readily performed via  one--dimensional numerical integration methods.

Theorem~\ref{thm:marginal} below clarifies that, besides being effective from a computational perspective, the above solution preserves the same theoretical accuracy guarantees  in approximating the target marginal posterior density  \smash{$\pi_{n,\cc}(\hat h_{\cc}) = \int \pi_{n}(\hat h) 
d \hat h_{\bar \cc}$}.

\begin{theorem} \label{thm:marginal}
Let  \smash{$\Pi_{n,\cc}(S)= \, \int_{S}\pi_{n,\cc}(\hat h_{\cc}) d\hat h_\cc$}  for $S \subset \mathbbm{R}^{d_\cc}$. Then, under the assumptions of Theorem \ref{thm:modal}, we have that
	\vspace{-1pt}
	\begin{eqnarray} \label{tot:variatindist:marginal}
		\| \Pi_{n,\cc}( \cdot ) - \hat{P}^n_{\textsc{sks}, \cc}(\cdot)  \|_{\textsc{tv}} = O_{P_{0}^n}\big(M^{c_9}_n/n\big), 
	\end{eqnarray}
	for $ c_9>0 $, where $\hat{P}_{\textsc{sks},\cc}^n(S) \, = \, \int_{S} \hat{p}_{\textsc{sks},\cc}^n(\hat{h}_\cc) d\hat h_\cc$ with $\hat{p}_{\textsc{sks},\cc}^n(\hat{h}_{\cc})$ defined as in \eqref{skew:marginal}.
\end{theorem}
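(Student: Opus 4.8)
The plan is to deduce the marginal bound from the joint one in Theorem~\ref{thm:modal} via the data--processing inequality for the total variation distance, and then to absorb the mismatch between the $\cc$--marginal of the joint skew--modal density and the marginal skew--modal density \eqref{skew:marginal} into a lower--order remainder. Concretely, let $\tilde P_{\cc}^n$ denote the $\cc$--marginal of $\hat P^n_{\textsc{sks}}$, i.e.\ the law of $\hat h_\cc$ when $\hat h \sim \hat p^n_{\textsc{sks}}$; since coordinate projection is a deterministic Markov kernel and $\|\cdot\|_{\textsc{tv}}$ is non--increasing under pushforwards, $\| \Pi_{n,\cc}(\cdot) - \tilde P_{\cc}^n(\cdot) \|_{\textsc{tv}} \leq \| \Pi_{n}(\cdot) - \hat P^n_{\textsc{sks}}(\cdot) \|_{\textsc{tv}} = O_{P_0^n}(M_n^{c_8}/n)$ by Theorem~\ref{thm:modal}. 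By the triangle inequality it therefore suffices to show $\| \tilde P_{\cc}^n(\cdot) - \hat P^n_{\textsc{sks},\cc}(\cdot) \|_{\textsc{tv}} = O_{P_0^n}(1/n)$, which will give \eqref{tot:variatindist:marginal} with $c_9 = c_8$.

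For this last step I would make rigorous the informal integration preceding \eqref{skew:marginal}, keeping track of the error. Write $2F(x) = 1 + 2\eta x + R(x)$, where the hypotheses on $F$ give $|R(x)| \lesssim x^2$ for $|x|\le 1$ and $|R(x)| \lesssim 1 + |x|$ for all $x$. Substituting $x = \hat\alpha_\eta(\hat h)$ into $\hat p^n_{\textsc{sks}}(\hat h) = 2\phi_d(\hat h;0,\hat\Omega)F(\hat\alpha_\eta(\hat h))$ and integrating out $\hat h_{\bar\cc}$, the affine part yields, by Gaussian marginalization and the conditional--moment identity $\mathbb E_{\hat h_{\bar\cc}\mid\hat h_\cc}\{\hat\alpha_\eta(\hat h)\} = \alpha_{\eta,\cc}(\hat h_\cc)$ recorded after \eqref{skew:marginal} (whose verification is exactly the computation \eqref{eq_sum_margin}--\eqref{nu1nu2}), the density $\phi_{d_\cc}(\hat h_\cc;0,\hat\Omega_{\cc\cc})\{1 + 2\eta\alpha_{\eta,\cc}(\hat h_\cc)\}$; re--expressing $1 + 2\eta\alpha_{\eta,\cc}(\hat h_\cc) = 2F(\alpha_{\eta,\cc}(\hat h_\cc)) - R(\alpha_{\eta,\cc}(\hat h_\cc))$ turns this into $\hat p^n_{\textsc{sks},\cc}(\hat h_\cc)$ plus a remainder. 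Thus $\tfrac12\int|\tilde p^n_\cc - \hat p^n_{\textsc{sks},\cc}|$ is controlled by the two $L^1$ remainders $\int \phi_d(\hat h;0,\hat\Omega)\,|R(\hat\alpha_\eta(\hat h))|\,d\hat h$ and $\int \phi_{d_\cc}(\hat h_\cc;0,\hat\Omega_{\cc\cc})\,|R(\alpha_{\eta,\cc}(\hat h_\cc))|\,d\hat h_\cc$ (using the tower property to turn the former, which arises as a $\hat h_\cc$--integral of a conditional expectation, into a plain joint Gaussian integral). On the set where $|\hat\alpha_\eta|\le 1$ the quadratic bound gives an integrand $\lesssim \phi_d(\hat h;0,\hat\Omega)\,n^{-1}\|\ell^{(3)}_{\hat\theta}/n\|^2\|\hat h\|^6$, whose integral is $O_{P_0^n}(1/n)$ because $\|\ell^{(3)}_{\hat\theta}/n\| = O_{P_0^n}(1)$ on the event $\hat A_{n,2}$ of Assumption~\ref{M2} and the sixth moment of $\phi_d(\cdot;0,\hat\Omega)$ is $O_{P_0^n}(1)$ on $\hat A_{n,1}$; the analogous argument handles the second remainder.

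The genuine technical point, which I expect to be the main obstacle, is the complementary region where $|\hat\alpha_\eta|>1$ (equivalently $\|\hat h\|\gtrsim n^{1/6}$), where the Taylor expansion of $F$ no longer applies. This, however, is precisely the type of estimate already carried out in the proofs of Theorem~\ref{thm:1} (cf.\ the bound \eqref{bound:out:skw}) and Theorem~\ref{thm:modal}: there one uses $|F|\le 1$ together with the sub--Gaussian tail of $\phi_d(\cdot;0,\hat\Omega)$ — valid on $\hat A_{n,1}$, where the eigenvalues of $\hat\Omega^{-1}$ lie in a fixed positive range by Assumption~\ref{M2} — so that $\int_{\|\hat h\|\gtrsim n^{1/6}}\phi_d(\hat h;0,\hat\Omega)\,(1+\|\hat h\|^3)\,d\hat h$ is super--polynomially small, hence $o_{P_0^n}(1/n)$; the same bound, together with the fact that the conditional Gaussian $\phi_{d-d_\cc}(\hat h_{\bar\cc};\Lambda_\cc\hat h_\cc,\bar\Omega)$ has covariance $\bar\Omega$ (a Schur complement of $\hat\Omega$, hence with eigenvalues bounded on $\hat A_{n,1}$) and mean depending on $\hat h_\cc$ only through $\Lambda_\cc\hat h_\cc$, controls the $\hat h_{\bar\cc}$--tail uniformly. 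Collecting the three summands — the joint bound $O_{P_0^n}(M_n^{c_8}/n)$ from Theorem~\ref{thm:modal} and the two $O_{P_0^n}(1/n)$ remainders — yields $\| \Pi_{n,\cc}(\cdot) - \hat P^n_{\textsc{sks},\cc}(\cdot)\|_{\textsc{tv}} = O_{P_0^n}(M_n^{c_9}/n)$ with $c_9 = c_8$. Given Theorem~\ref{thm:modal}, the argument is conceptually routine, the difficulty being purely the bookkeeping of the truncation remainders transplanted to the marginal setting.
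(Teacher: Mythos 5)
Your proposal is correct and follows essentially the same route as the paper: reduce to the joint bound of Theorem~\ref{thm:modal} (your data--processing step is the paper's Jensen/triangle step), and then control the mismatch between the $\cc$--marginal of the joint skew--modal density and $\hat p^n_{\textsc{sks},\cc}$ by comparing $\E_{\hat h_{\bar \cc}|\hat h_{\cc}}F(\hat\alpha_{\eta}(\hat h))$ with $F(\alpha_{\eta,\cc}(\hat h_{\cc}))$ through the linear expansion of $F$ at zero together with Gaussian tail control, which is exactly the content of the paper's Lemma~\ref{lemma:joint:margin:diff}. Your only deviations are bookkeeping ones — truncating on $|\hat\alpha_{\eta}|$ (note the quadratic remainder bound for $F$ is only guaranteed for $|x|\le\delta$, not $|x|\le 1$, a trivially fixable threshold) and using Gaussian sixth--moment bounds in place of sup bounds over $\|\hat h_{\cc}\|\le 2M_n$ — which if anything remove the poly--log factor from that term and give $c_9=c_8$ rather than $c_8\vee c_{10}$.
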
 

\begin{remark}\label{rem_3}
	As for Remark \ref{rem_2}, Theorem~\ref{thm:marginal} holds also in the original~parametrization $\theta$. Considering, in particular, the \textsc{gsn} case with $F(\cdot)=\Phi(\cdot)$, this implies that
	\begin{equation} \label{post:theta:map:margin}
		\hat{p}^n_{\textsc{sks},\cc}(\theta_{\cc}) = 2 \phi_{d_{\cc}}( \theta_{\cc}; \hat{\theta}_{\cc}, J_{\hat{\theta}, \cc \cc}^{-1}) \Phi\Big( \frac{\sqrt{2 \pi}}{12 } \Big\{ \frac{\nu_{1,s}^n}{n}  (\theta - \hat \theta)_s  +  \nu_{3,stl}^n  (\theta - \hat \theta)_s  (\theta - \hat \theta)_t   (\theta - \hat \theta)_l\Big\} \Big), 
	\end{equation}
approximates $\pi_{n,\cc}(\hat h_{\cc})$ with rate as in Theorem \ref{thm:marginal}, $s,t,l \in \cc$ and $\nu_{1,s}^n$,$ \nu_{3,stl}^n$ defined in  
	\eqref{nu1nu2}.
\end{remark}

 %%%%%%%%%%%%%%%%%%%%%%%%%%%%%%%%%%%%%%%%%%%%%%
%%%%%%%%%%%%%%%%%%%%%%%%%%%%%%%%%%%%%%%%%%%%%%

\section{Empirical analysis of skew--modal approximations} \label{sec_32}
Sections~\ref{sec_321}--\ref{sec_322} demonstrate on both synthetic datasets and real--data applications that the join and marginal skew--modal approximations  (\textsc{skew--m})  in Section~\ref{sec_3} achieve remarkable accuracy improvements relative to the Gaussian--modal counterpart  (\textsc{gm})  from the Laplace method. These improvements are again in line with the rates we derived theoretically. Comparisons against other state--of--the--art approximations from mean--field \textsc{vb} \citep[e.g.,][]{blei2017variational} and \textsc{ep} \citep[e.g.,][]{vehtari2020expectation} are also discussed. In the following, we focus, in particular, on assessing performance of the generalized skew--normal approximations in Remarks~\ref{rem_2}--\ref{rem_3}.
 %%%%%%%%%%%%%%%%%%%%%%%%%%%%%%%%%%%%%%%%%%%%%%
%%%%%%%%%%%%%%%%%%%%%%%%%%%%%%%%%%%%%%%%%%%%%%

\vspace{-2pt}
\subsection{Exponential model revisited} \label{sec_321}
Let us first replicate the simulation studies in Sections \ref{sec_241}--\ref{sec_242} with  focus on the practical skew--modal approximation in Section~\ref{sec_31}, rather than its population version which assumes knowledge of $\theta_*$. Consistent with this focus, the performance of the \textsc{skew--m} approximation in Equation~\eqref{post:theta:map} is compared against the \textsc{gm} solution  \smash{$\mbox{N}(\hat{\theta}, J_{\hat{\theta}}^{-1})$} arising from the Laplace method \citep[see e.g.,][p. 318]{gelman2013bayesian}. Note that both the correctly--specified and misspecified models satisfy the additional Assumptions \ref{M1}--\ref{M2} required by Theorem~\ref{thm:modal} and Remark~\ref{rem_2}. In fact, \smash{$\hat{\theta}$} is asymptotically equivalent to the maximum likelihood estimator which implies that condition \ref{M1} is fulfilled. Moreover, in view of the expressions for the first three log--likelihood derivatives in Section \ref{sec_241} also \ref{M2} holds.

\begin{table}[t]
	\centering
	\caption{\footnotesize For each $n$ from $n=10$ to $n=50$, sample size $\bar{n}$ required by the classical Gaussian from the Laplace method (\textsc{gm}) to obtain the same \textsc{tv} and \textsc{fmae} achieved by our skew--modal  approximation (\textsc{skew--m}) with that~$n$.}
	\begin{tabular}{lrrrrr}
		\hline
		&  $ n=10$ & $ n=15$ & $ n=20$ & $ n=25$ & $ n=50$  \\ 
		\hline
		$\bar{n}: \ \textsc{tv}^{\bar{n}}_{\textsc{gm}}=\textsc{tv}^{n}_{\textsc{skew--m}} $ & 150 & 260 & 470 & 730 & $> 2500$  \\ 
		$\bar{n}: \ \textsc{fmae}^{\bar{n}}_{\textsc{gm}}=\textsc{fmae}^{n}_{\textsc{skew--m}} $   & 190 & 390 & 650 & 1030 & $> 2500$  \\ 
		\hline
	\end{tabular}
	\label{tab1}
	\vspace{-10pt}
\end{table}

Table~\ref{tab1} reports the same summaries as in the second part of Table~\ref{tab}, but now with a focus on comparing the  \textsc{skew--m}  approximation in \eqref{post:theta:map} and the  \textsc{gm}  \smash{$\mbox{N}(\hat{\theta}, J_{\hat{\theta}}^{-1})$}. Results are in line with those in Section \ref{sec_241}, and show, for example, that to achieve the same accuracy attained by the skew--modal with $n=20$, the Gaussian from the Laplace method requires a sample size of $\bar{n} \approx 500$. These results are strengthened in Tables~\ref{tab1_supp}--\ref{tab_exp_miss_n} in the Supplementary Materials which confirm the findings of Sections \ref{sec_241}--\ref{sec_242}. Also in this context, the asymptotic theory in Theorem~\ref{thm:modal} closely matches the empirical behavior observed in~practice.

\subsection{Probit and logistic regression model} \label{sec_322}
We consider now a real--data application~on~the Cushings dataset \citep{masslibrary}, openly--available in the \texttt{R} library \texttt{Mass}. In this case the true data--generative model is not known and, therefore, this analysis is useful to evaluate again performance in possibly misspecified contexts.  

The data are  obtained from a medical study on $n=27$ individuals, aimed at investigating the relationship between four different sub--types of Cushing's syndrome and two steroid metabolites, {\em Tetrahydrocortisone} and {\em Pregnanetriol} respectively. To simplify the analysis, we consider here the binary response variable $X_i \in \{0,1\}$ which takes value $1$ if patient $i$ is affected by bilateral hyperplasia, and $0$ otherwise, for $i = 1\dots,n.$ The observed covariates are $z_{i1}$ =  {\em “urinary excretion rate of Tetrahydrocortisone for patient $i$"} and {\em $z_{i2}$ = “urinary excretion rate of Pregnanetriol for patient $i$"}. In the following, we focus on the two most widely--implemented regression models for binary data, namely the probit regression $X_i \ \smash{\stackrel{ind}{\sim}} \ \mbox{Bern}( \Phi(\theta_0  + \theta_1 z_{i1} + \theta_2 z_{i2} ))$, and the logistic one $X_i \ \smash{\stackrel{ind}{\sim}} \ \mbox{Bern}( g(\theta_0 + \theta_1 z_{i1} + \theta_2 z_{i2} ))$ with $ g(\cdot)$ the inverse logit function defined in Remark~\ref{reF}.

Under both models, Bayesian inference proceeds via standard weakly informative Gaussian priors $\mbox{N}(0,25)$ for the three regression coefficients within $\theta=(\theta_0, \theta_1, \theta_2)^{\intercal}$. Such priors, combined with the likelihood of each model, yield a posterior for $\theta$ which we approximate under  both the joint and the marginal skew--modal approximations (\textsc{skew--m}). Table~\ref{table:tv} compares, via different measures, the accuracy of these solutions relative to the one obtained under   the classical Gaussian--modal approximation from the Laplace methods (\textsc{gm}) \citep[][pp. 318]{gelman2013bayesian}. Notice that, all these approximations can be readily derived from the closed--form derivatives of the log--likelihood and log--prior for both the probit and logistic regression. Moreover, since the prior is Gaussian, the \textsc{map} under both models coincides with the ridge--regression estimator and hence can be computed via basic \texttt{R} functions.

 \begin{table}[t]
 \vspace{-5pt}
\renewcommand{\arraystretch}{0.9}
\centering
\caption{\footnotesize For the probit and logistic regression, comparison among the accuracy of the skew--modal approximation (\textsc{skew--m}) and the classical Gaussian one from the Laplace method (\textsc{gm}). Performance is measured in terms of (i) \textsc{tv} distances from the target joint posterior and its marginals, (ii) error (\textsc{err}) in approximating the posterior means and (iii) average error (\textsc{ave--pr}) in the approximation of the posterior probabilities  of being affected by bilateral hyperplasia for each patient. Bold values indicate best performance under each measure.} 
\small
  \begin{adjustbox}{max width=\textwidth}
\begin{tabular}{lrrrrrrrr}
  \hline
 \quad & \qquad  \qquad $\textsc{tv}_{\theta}$ &  $\textsc{tv}_{\theta_{0}}$  &  $\textsc{tv}_{\theta_{1}}$ &  $\textsc{tv}_{\theta_{2}}$  & $\textsc{err}_{\theta_{0}}$&  $\textsc{err}_{\theta_{1}}$ &  $\textsc{err}_{\theta_{2}}$ & \textsc{ave--pr} \\ 
   \hline
 Probit & &   &  &  & &  &  &    \\ 
  \hline
    \textsc{skew--m} \qquad & $\bf 0.11$ &  $\bf 0.03$ & $\bf 0.04$ & $\bf0.05$  &$\bf 0.004$& $\bf 0.002$ &$ \bf 0.015$& $\bf 0.006$  \\ 
     \textsc{gm} \qquad& $0.19$  & $0.09$ & $0.08$ & $0.11$  & $-0.092$  & $0.008$ & $0.051$&$0.026$    \\ 
    \hline
 Logit & &   &  &  & &  &  &    \\ 
  \hline
    \textsc{skew--m} \qquad & $\bf 0.14$  & $\bf 0.05$ & $ \bf 0.06$ & $\bf 0.07$&$\bf 0.069$ & $\bf -0.001$ & $\bf -0.008$&$\bf 0.009$  \\ 
     \textsc{gm} \qquad & $0.23$  & $0.11$ & $0.10$ & $0.14$& $-0.116$ & $0.010$ & $0.060$ & $0.064$ \\ 
       \hline
\end{tabular}
\end{adjustbox}
\label{table:tv}
\vspace{-10pt}
\end{table}

Table \ref{table:tv} displays Monte Carlo estimates of \textsc{tv} distances from the target posterior distribution and its marginals, along with errors in approximating the posterior means for~the~three regression parameters and the posterior probabilities  of being affected by a bilateral hyperplasia.  Under probit, the latter quantity is defined as $\textsc{Ave--pr} = \sum_{i = 1}^n | \mathrm{pr}_i - \mathrm{\hat{pr}}_{\textsc{app},i}  |/n$ with $\mathrm{pr}_i = \int \Phi(\theta_0 + \theta_1 z_{i1} + \theta_2 z_{i2} ) \pi_n(\theta)d\theta$  and $ \mathrm{\hat{pr}}_{\textsc{app},i} = \int \Phi(\theta_0 + \theta_1 z_{i1} + \theta_2 z_{i2} ) \hat{p}^n_{\textsc{app}}(\theta) d\theta$, for  each $i = 1,\dots,n$, where $ \hat{p}^n_{\textsc{app}}(\theta)$ is any generic approximation~for~$\pi_n(\theta)$. The logistic case follows by replacing $\Phi(\cdot)$ with $g(\cdot)$. The Monte Carlo estimate of such a measure and of~all those reported within Table \ref{table:tv} rely on $10^5$  i.i.d. samples from both \textsc{skew--m} and \textsc{gm}, and~on $2$ chains of length $10^5$ of Hamiltonian Monte Carlo realizations from the target posterior~obtained with the \texttt{R} function \texttt{stan\_glm} from the \texttt{rstanarm} package.
  
As illustrated in Table  \ref{table:tv}, the proposed \textsc{skew--m} solutions generally yield remarkable accuracy improvements relative to \textsc{gm}, under both models. More specifically, \textsc{skew--m} almost halves, on average, the \textsc{tv} distance associated with \textsc{gm}, while providing a much more accurate approximation for the posterior means and posterior probabilities. This is an important accuracy gain provided that the ratio between the absolute error made by \textsc{gm} in posterior means approximation and the actual value of these posterior means is, on average, $\approx 0.25$.

As discussed in the Supplementary Materials,  \textsc{skew--m} outperforms also state--of--the--art mean--field  \textsc{vb} \citep{consonni2007mean,durante2019conditionally, fasanoscalable}, and is competitive with \textsc{ep} \citep{Chopin_2017}. The latter result is particularly remarkable since our proposed approximation only leverages  the local behavior of the posterior distribution in a neighborhood of its mode, whereas \textsc{ep}  is known to provide an accurate global solution aimed at matching the first two moments of the target posterior.

\subsection{High--dimensional logistic regression} \label{sec_hd_logistic}
We conclude with a final real--data application which is useful to assess more in detail the marginal skew--modal approximation from Section \ref{sec_marginal}, while studying the performance of the proposed class of skewed approximations in a high--dimensional context that partially departs from the regimes we have studied from a theoretical perspective. To this end, we consider a clinical study that investigates whether biological measurements from cerebrospinal fluid  collected on $n = 333$ subjects can be used to diagnose the Alzheimer's disease \citep{craig2011multiplexed}. The dataset~is~available in the \texttt{R} package \texttt{AppliedPredictiveModeling} and comprises $130$ explanatory variables along with a response $X_i \in \{0,1\}, \,i = 1, \dots,n$, which takes the value 1 if patient $i$ is affected by the Alzheimer's disease, and 0 otherwise.

Bayesian inference relies on logistic regression with independent Gaussian priors~$\mbox{N}(0,4)$ for the coefficients. Here we consider a lower variance than in the previous application to induce shrinkage in this higher--dimensional context. The inclusion of the intercept and the presence of a categorical variable with 6 levels imply that the number of parameters in the model is $d = 135$. As a consequence, although the sample size is not small in absolute terms, since $n/d \approx 2.5$ and $d>n^{1/3}$ the behavior of the posterior  in this example is not necessarily closely described by the  asymptotic and non--asymptotic theory developed in Section \ref{sec_3}. 

\begin{table}[t]
\vspace{-8pt}
	\caption{\footnotesize For the logit model in Section \ref{sec_hd_logistic}, mean and median of the approximation error (\textsc{err}) and \textsc{tv} distance from the target posterior under both the marginal \textsc{skew--m} and \textsc{gm}. Bold values indicate best performance.}
	\label{table:high:dim}
	\centering
	\begin{tabular}{lrrrrr}
		\hline
		&  \textsc{err} (mean) &  \textsc{err} (median) &  \textsc{tv}  (mean) &  \textsc{tv} (median) \\ 
		\hline
		\textsc{skew--m} & {\bf 0.139} & {\bf 0.068 } & {\bf 0.104} & {\bf 0.078} \\ 
		\textsc{gm} & 0.425  & 0.347 & 0.145  & 0.120  \\ 
		\hline
	\end{tabular}
	\vspace{-8pt}
\end{table}

Nonetheless, as clarified in Table \ref{table:high:dim}, \textsc{skew--m} still yields remarkable improvements relative to \textsc{gm} also in this challenging regime. These gains are visible both in the absolute~difference between the exact posterior mean and its approximation (\textsc{err}), and also in the \textsc{tv} distances between each marginal posterior density and its approximation (\textsc{tv}). Such quantities are computed via Monte Carlo as in Section~\ref{sec_322} for each of the $d=135$ coefficients.  Table \ref{table:high:dim} reports the means and medians over these $135$ different values. As for the results in Section~\ref{sec_322}, also these improvements are particularly relevant provided that the absolute error of \textsc{gm} is not negligible when compared with the actual posterior means ($95\%$ of these means are between $-2.68$ and $2.66$). These findings provide further empirical evidence in favor of the proposed \textsc{skew--m}, and clarify that it can yield substantial accuracy improvements whenever the shape of the posterior departs from Gaussianity, either because of low sample size, or also in situations where $n$ is large in absolute terms, but not in relation to $d$.

\section{Discussion} \label{sec_4}
Through a novel treatment of a third order version of the Laplace method, this article shows that it is possible to derive valid, closed--form and  tractable skew symmetric approximations of posterior distributions. Under general assumptions which also account for both misspecified models and non--i.i.d. settings, such a novel family of approximations is shown to admit a Bernstein--von Mises type result that establishes remarkable improvements in convergence rates to the target posterior relative to those of the classical Gaussian limiting approximation. The specialization of this general theory to regular~parametric models yields skew--symmetric approximations with a direct methodological impact and immediate applicability under a novel skew--modal solution which is obtained by replacing the unknown $\theta_*$ entering the theoretical version with its \textsc{map} estimate \smash{$\hat{\theta}$}. The empirical studies on both simulated data~and real applications confirm that the remarkable accuracy improvements dictated by our asymptotic and non--asymptotic theory are visible also~in~practice,~even for small--sample regimes. This provides further support to the superior theoretical, methodological and practical performance of the proposed class of approximations.

The above advancements open new avenues that stimulate research in the field of Bayesian inference based on skewed deterministic approximations. As shown in a number of contributions appeared after our article and referencing to our theoretical results, interesting directions~include the introduction of skewness in other deterministic approximations, such~as \textsc{vb} \citep[e.g.,][]{tan2023variational},  and further refinements of the high--dimensional results implied by~the non--asymptotic bounds we derive for the proposed skew--modal approximation.  \citet{katsevich2023tight} provides an interesting contribution along such a latter direction, which leverages~a novel theoretical approach based on Hermit polynomial expansions to show that $d$ can possibly grow faster than $n^{1/3}$, under suitable models. However, unlike for our results, the focus is on studying non--valid skewed approximating densities. The notion of effective dimension \smash{$\tilde{d}$} introduced by \citet{spok2021} and \citet{spokoiny2023inexact} for the study of the classical Gaussian Laplace approximation in high dimensions is also worth further~investigations under our skewed extension provided that \smash{$\tilde{d}$} can be possibly $o(d)$.

 Semiparametric settings \citep[e.g.,][]{bickel2012semiparametric,castillo2015bernstein} are also of interest. Moreover, although the inclusion of skewness is arguably sufficient to yield an accurate approximation of posterior distributions, accounting for kurtosis might provide additional improvements both in theory and in practice. To this end, a relevant research direction is to seek for an alternative to the Gaussian density in the symmetric part, possibly obtained from an extension to the fourth order of our novel treatment of the Laplace method. Our conjecture is that this generalization would provide an additional order--of--magnitude improvement in the rates, while yielding an approximation  still within the \textsc{sks} class.  

%%%%%%%%%%%%%%%%%%%%%%%%%%%%%%%%%%%%%%%%%%%%%%
%%%%%%%%%%%%%%%%%%%%%%%%%%%%%%%%%%%%%%%%%%%%%%
\vspace{20pt}
\renewcommand{\baselinestretch}{1.15} 
\bibliographystyle{imsart-nameyear}
\bibliography{example}

\vspace{19pt}
\renewcommand{\baselinestretch}{1} 
\setcounter{equation}{23}
\numberwithin{equation}{section}
\numberwithin{table}{section}
\numberwithin{figure}{section}

\begin{center}
\large {\bf  Supplementary Materials}
\end{center}

\begin{appendix}
\section{Proofs of Lemmas, Theorems and Corollaries}\label{sec_proof_ltc}
Section~\ref{sec_proof_ltc} contains the proofs of the Lemmas, Theorems and Corollaries stated in the~main article. The proof of Theorem~\ref{thm:modal} is discussed in Section~\ref{sec:proof:finite:sample}, and follows as a direct consequence of the non--asymptotic bound we derive for the \textsc{tv} distance among the skew--modal approximation and the target posterior.

%%%%%%%%%%%%%%%%%%%%%%%%%%%%%%%%%%%%%%%%%%%%%%%%%%%%%%%%%%%%%

\begin{proof}[proof of Lemma~\ref{lemma:distr:inv}] The proof of Lemma \ref{lemma:distr:inv} follows directly from Proposition 6~in \citet{wang2004skew} which states the distributional invariance of \textsc{sks} densities with respect to even functions.
\end{proof}

%%%%%%%%%%%%%%%%%%%%%%%%%%%%%%%%%%%%%%%%%%%%%%%%%%%%%%%%%%%%%

\begin{proof}[proof of Corollary~\ref{corol:1}] 	To prove \eqref{corol1:out1} notice that the general Assumptions \ref{cond:uni}--\ref{cond:LAN} and  \ref{cond:concentration}, introduced in Section \ref{sec_general_thm}, are implied by Assumptions \ref{cond:uni} and \ref{cond:1}--\ref{cond:4} together with Lemma~\ref{lemma:taylor:fixed:d} and Lemma \ref{lemma:post:contr} in Section~\ref{sec_logp}. In addition, Assumption \ref{cond:3} implies that Assumption \ref{cond:prior}~is~verified with \smash{$\log\pi_{\theta_*}^{(1)} = (\partial/\partial \theta ) \log \pi(\theta)_{| \theta = \theta_*}$}. Hence, all the conditions of Theorem \ref{thm:1} are~satisfied with $\delta_n = 1/\sqrt{n}$, proving the validity of the statement in Equation~\eqref{corol1:out1}.
	
	To prove~\eqref{corol1:out2}, recall that $K_n = \{ h \, : \, \| h\| < M_n \} = \{ \theta \, : \, \| \theta - \theta_*\| <  M_n/\sqrt{n} \} $. In addition, since $ | G | \lesssim \|h\|^r $, it is sufficient to prove the statement for $\|h\|^r$. Leveraging~the~triangle inequality we have
	\begin{equation}
		\begin{aligned}
			&\int \|h\|^r | \pi_n(h) - p_{\textsc{sks}}^n(h)  |  dh     \\
			& \qquad  \leq \int_{K_n^c}  \|h\|^r  \pi_n(h) dh + \int_{K_n^c}  \|h\|^r p_{\textsc{sks}}^n(h) dh +\int_{K_n} \|h\|^r | \pi_n(h) - p_{\textsc{sks}}^n(h) | dh. 
		\end{aligned}
		\label{ineq_A1}
	\end{equation} 
    Recall that, $A_{n,0} = \{ \lambda_{\textsc{min}}(J_{\theta_*}/n) > \eta_1^*\} \cap  \{ \lambda_{\textsc{max}}(J_{\theta_*}/n) < \eta_2^*\}$, for some positive constants \smash{$ \eta_1^*, \eta_2^*$}, and \smash{$A_{n,1} = A_{n,0} \cap \{\|\xi\| < \tilde{M}_n\} $} for some \smash{$\tilde{M}_n$} going to infinity arbitrary slow. Now, note that from Assumptions \ref{cond:1}--\ref{cond:2}, Lemma \ref{lemma:taylor:fixed:d} and Lemma \ref{lemma:eigen:J} it follows $P_0^n A_{n,1} = 1 - o(1)$.
  
	To bound the element \smash{$\int_{K_n^c}  \|h\|^r  \pi_n(h) dh$} in ~\eqref{ineq_A1} we use the fact that, from Assumptions \ref{cond:3}--\ref{cond:4} and Equation \eqref{contraction:denom} of Lemma \ref{lemma:post:contr}, the event
	$ A_{n,3} = A_{n,1} \cap \{ \sup_{\|\theta - \theta_* \|>  M_n/\sqrt{n}} \{ \ell(\theta) - \ell(\theta_*)  \} < - c_5 M_n^2 \} \cap \smash{\{ \int_{K_n} e^{\ell(\theta_* + h/\sqrt{n})-\ell(\theta_*)} \pi(\theta_* + h/\sqrt{n})dh > \tilde c_1 \},  }$ 	
	with $\tilde c_1 $ denoting an arbitrary small and fixed positive constant, satisfies $P_0^n A_{n,3} = 1-o(1)$.	
	By combining this~result with $\int \|h\|^r \pi(\theta_* + h/\sqrt{n}) dh < \infty$ and Jensen's inequality we obtain 
	\begin{equation*}
		\begin{aligned}
		\int_{K_n^c} \|h\|^r \pi_n(h) dh \mathbbm{1}_{A_{n,3}} & \leq \int_{K_n^c}   \|h\|^r  \frac{ e^{\ell(\theta_* + h/\sqrt{n})-\ell(\theta_*)} \pi( \theta_* + h/\sqrt{n}) }{\int_{K_n} e^{\ell(\theta_* + h'/\sqrt{n})-\ell(\theta_*)} \pi(\theta_* + h'/\sqrt{n})dh'}dh \mathbbm{1}_{A_{n,3}}\\
			& \lesssim \frac{1}{n^{c_0 c_5}} \int \|h\|^r \pi(\theta_* + h/\sqrt{n}) dh = O(n^{-1}), 			
		\end{aligned}
	\end{equation*}
	for a sufficiently large choice of $c_0$ in $M_n$. Since $P_0^n A_{n,3} = 1-o(1)$, this implies
	\begin{equation} \label{corol1:functionals:help1}
		\int_{K_n^c} \|h\|^r \pi_n(h) dh = O_{P_0^n}(n^{-1}).
	\end{equation}  
	Similarly, the boundedness of $w(\cdot)$
	and the tail behavior of the Gaussian distribution ensure
	\begin{equation*}
		\begin{aligned}
			\int_{K_n^c} \|h\|^r p_{\textsc{sks}}^n(h) dh \mathbbm{1}_{A_{n,1}}			\leq  2 \int_{K_n^c} \|h\|^r \phi_d(h; \xi, \Omega) dh \mathbbm{1}_{A_{n,1}} = O(n^{-1}),
		\end{aligned}
	\end{equation*}
	for a sufficiently large choice of $c_0$. In turn, this implies
	\begin{equation}\label{corol1:functionals:help2}
		\int_{K_n^c} \|h\|^r p_{\textsc{sks}}^n(h) dh  =  O_{P_0^n}(n^{-1}).
	\end{equation}
	Finally, Equation \eqref{corol1:out1} gives
	\begin{equation}\label{corol1:functionals:help3}
		\int_{K_n} \|h\|^r \big| \pi_n(h) - p_{\textsc{sks}}^n(h) \big | dh  \leq  M_n^r \int | \pi_n(h) - p_{\textsc{sks}}^n(h) | dh = O_{P_0^n}(M_n^{c_6 + r}/n).
	\end{equation}
 Combining \eqref{ineq_A1}, \eqref{corol1:functionals:help1}, \eqref{corol1:functionals:help2} and \eqref{corol1:functionals:help3} proves  Equation \eqref{corol1:out2}.     
\end{proof}

%%%%%%%%%%%%%%%%%%%%%%%%%%%%%%%%%%%%%%%%%%%%%%%%%%%%%%%%%%%%%

\begin{proof}[proof of Lemma~\ref{lemma:taylor:fixed:d}] 
	The proof of the lemma follows directly from Assumptions \ref{cond:1}--\ref{cond:2}. Such assumptions allow to take, in $K_n = \{ h \, : \,  \|h\| \leq M_n \}$, the following Taylor expansion 
	\begin{equation*} 
		\log \frac{p_{\theta_* + h/\sqrt{n} }^n}{p_{\theta_*}^n}(X^n) = h_s \frac{\ell^{(1)}_{\theta_*,s}}{\sqrt{n}} - \frac{1}{2} \frac{j_{st}}{n} h_s h_{t} + \frac{1}{6 \sqrt{n}} \frac{\ell^{(3)}_{\theta_*,stl}}{n} h_s h_t h_l +  r_{n,1}(h),
	\end{equation*}
	with  $\ell^{(1)}_{\theta_*,s}/\sqrt{n}= O_{P_0^n}(1)$, $j_{st}/n =O_{P_0^n}(1)$, $ \ell^{(3)}_{\theta_*,stl}/n = O_{P_0^n}(1)$ and
	$$ \sup_{h \in K_n} r_{n,1}(h) = \sup_{h \in K_n} \frac{1}{24 n} \frac{\ell^{(4)}_{\theta_*,stlk}(\beta h)}{n} h_s h_t h_l h_k  =  O_{P_0^n}(M_n^4/n),$$
	for some $\beta \in (0,1)$. To conclude, we only need to check that the first term can be written as \smash{$h_s (j_{st}/n)\Delta_{\theta_*, t}^n $}  with \smash{$\Delta_{\theta_*, t}^n = j^{-1}_{st} \sqrt{n} \ell^{(1)}_{\theta_{*},s} = O_{P_0^n}(1)$}. To this end, note that, in view of Assumption \ref{cond:2}, Lemma \ref{lemma:eigen:J} implies that $\lambda_{\textsc{max}}(J_{\theta_*}/n)$ and $\lambda_{\textsc{min}}(J_{\theta_*}/n)$ are bounded from above and below, respectively, with probability tending to 1 as $n \to \infty$. Since, by the eigendecomposition (we assume the eigenvectors are normalized) it follows that the entries of $(J_{\theta_{*}}/n)^{-1}$ are bounded, in absolute value, by $d /\lambda_{\textsc{min}}(J_{\theta_*}/n),$ we get $n j^{-1}_{st} = O_{P_0^n}(1)$, which implies, in turn, $\Delta_{\theta_*, t}^n=O_{P_0^n}(1)$.
	
\end{proof}

%%%%%%%%%%%%%%%%%%%%%%%%%%%%%%%%%%%%%%%%%%%%%%%%%%%%%%%%%%%%%

\begin{proof}[proof of Lemma~\ref{lemma:post:contr}] 
	 Let us start by writing 
	\begin{equation} \label{help:lemma:conc:1}
		\Pi_n( K_n^c)  \leq \frac{\int_{K_n^c} p_{\theta_* + h/\sqrt{n}}^n(X^{n}) \pi(\theta_* + h/\sqrt{n}) dh }{\int_{K_n} p_{\theta_* + h/\sqrt{n}}^n(X^{n}) \pi(\theta_* + h/\sqrt{n}) dh}. 	
	\end{equation}
	Recall that under Assumption \ref{cond:4} it holds
	$$\textstyle \lim_{n \to \infty } P_0^{n} \lbrace \sup_{\|h\|> M_n}\{ \ell(\theta_* + h/\sqrt{n}) - \ell(\theta_*)  \} < - c_5 M_n^2 \rbrace = 1.$$
	As a consequence, for every $D>1$, 
	\begin{equation} \label{contraction:numerator}
		\begin{aligned}
			\int_{  K_n^c} (p_{\theta_* + h/\sqrt{n}}^n/p_{\theta_*}^n)(X^{n}) \pi(\theta_* + h/\sqrt{n})/\pi(\theta_*) dh = O_{P_0^n}(n^{-D}),
		\end{aligned}
	\end{equation}
	given a sufficiently large constant $c_0$ in $M_n$ and the boundedness condition within Assumption \ref{cond:3}. 
	For the denominator of the right--hand--side of \eqref{help:lemma:conc:1}, we use Assumptions \ref{cond:1}--\ref{cond:2}--\ref{cond:3} to consider the Taylor expansions reported in \eqref{taylor:lprior:star} and \eqref{taylor:llik:star}. Recall that from Assumption \ref{cond:2} and Lemma \ref{lemma:eigen:J} there exist two positive constants $\eta^*_1$ and $\eta^*_2$ such that  $A_{n,0} = \{ \lambda_{\textsc{min}}(V_{\theta_*}^n  ) > \eta^*_1 \} \cap \{ \lambda_{\textsc{max}}( V_{\theta_*}^n ) < \eta^*_2\}$ holds with probability $P_{0}^n A_{n,0}  = 1-o(1)$.
	As a consequence, if we collect the third order term in \eqref{taylor:llik:star} and the prior effect in the remainder, it follows
		\begin{equation*}
			\begin{aligned}
				\frac{p_{\theta_* + h/\sqrt{n}}^n}{p_{\theta_*}^n}(X^{n}) \frac{\pi(\theta_* + h/\sqrt{n})}{\pi(\theta_*)}\frac{1}{\mathbbm{1}_{A_{n,0}}}	=  \exp \lbrace h_s v_{st}^n \Delta_{\theta_*, t}^n - (1/2)  v_{st}^n h_s h_t + r_{n,5}(h) \rbrace \frac{1}{\mathbbm{1}_{A_{n,0}}} &\\
				=  \exp\lbrace - (1/2)  v_{st}^n (h-\Delta_{\theta_*}^n)_s (h-\Delta_{\theta_*}^n )_{t} + \gamma_n +r_{n,5}(h) \rbrace \frac{1}{\mathbbm{1}_{A_{n,0}}}&,
			\end{aligned} 
		\end{equation*}
	with $\gamma_n =  v_{st}^n \Delta_{\theta_*,s}^n \Delta_{\theta_*,t}^n/2 >0$ since  $V_{\theta_*}^n$ is positive definite when conditioned on $A_{n,0}$, 
	$$r_{n,5}(h) =r_{n,1}(h) + r_{n,2}(h) + (1/6 \sqrt{n}) a^{(3),n}_{\theta_*,stl} h_s h_t h_l +  (1/\sqrt{n})\log \pi^{(1)}_{\theta_*,s} h_s,  $$ and $r_{n,5}= \sup_{h \in K_n} r_{n,5}(h) = O_{P_0^n}(M_n^{3}/\sqrt{n})$. Notice that we consider $1/\mathbbm{1}_{A_{n,0}}$ instead~of~$\mathbbm{1}_{A_{n,0}}$ since we are currently studying the quantity at the denominator of \eqref{help:lemma:conc:1}.
	
	Define now the event $A_{n,4} = A_{n,0}  \cap \{\|\Delta_{\theta_*}^n\| < \tilde{M}_n\} \cap \{ |r_{n,5}| < \gamma_1 \} $ for some $\tilde{M}_n$ going~to infinity arbitrary slow and $\gamma_1>0$ a fixed positive constant. Since $P_0^n(A_{n,4}) = 1-o(1)$ and $\gamma_n>0$, we can equivalently study the asymptotic behavior of the following lower bound
		\begin{equation} \label{contraction:denom} 
			\begin{aligned}
				& \int_{K_n} (p_{\theta_* + h/\sqrt{n}}^n/p_{\theta_*}^n)(X^{n}) \pi(\theta_* + h/\sqrt{n})/\pi(\theta_*) dh  \frac{1}{\mathbbm{1}_{A_{n,4}}} \\
				 &\quad \geq  e^{-\gamma_1}	\int_{K_n} \exp\{ -   v_{st}^n(h-\Delta_{\theta_*}^n)_s (h-\Delta_{\theta_*}^n)_{t}/2 \} dh  \frac{1}{\mathbbm{1}_{A_{n,4}}}\\
				& \quad \quad   =  \frac{e^{-\gamma_1} (2 \pi )^{d/2}}{|V_{\theta_*}^n|^{1/2}}	\int_{ K_n} \frac{ |V_{\theta_*}^n|^{1/2}}{ (2 \pi )^{d/2}} \exp\{ -  v_{st}^n (h-\Delta_{\theta_*}^n)_s (h-\Delta_{\theta_*}^n)_{t}/2 \} dh  \frac{1}{\mathbbm{1}_{A_{n,4}}}.
			\end{aligned}
		\end{equation}
	
	Due to the fact that, in $K_n$, $M_n \to \infty$, $\Delta_{\theta_*}^n = O_{P_0^n}(1)$ and that, in $\mathbbm{1}_{A_{n,4}}$, the eigenvalues of $V_{\theta_*}^n$ lay on a bounded and positive range, the quantity in the last display is positive and bounded away from zero. This, together with \eqref{contraction:numerator}, proves Lemma~\ref{lemma:post:contr}.
\end{proof}

%%%%%%%%%%%%%%%%%%%%%%%%%%%%%%%%%%%%%%%%%%%%%%%%%%%%%%%%%%%%%

\begin{proof}[proof of Lemma~\ref{lemma:cond4}]
	To prove Lemma~\ref{lemma:cond4},  let us deal with the cases $\|\theta - \theta_* \|> \delta_1$ and  $ M_n/\sqrt{n}  < \|\theta - \theta_* \| < \delta_1 $ separately. For $\|\theta - \theta_* \|> \delta_1$ the claim trivially follows from \eqref{lemma:5:cond:1}.
	We are left to deal with the case $M_n/\sqrt{n} < \|\theta - \theta_* \| < \delta_1 $. To this end, let us write 
	\begin{equation*}
		\begin{aligned}
			\{ \ell(\theta) - \ell(\theta_*) \}/n = D_{n}(\theta,\theta_*) + \bar{D}_{n}(\theta,\theta_*).
		\end{aligned}
	\end{equation*}
    where $D_n(\theta,\theta_*) = [ \{ \ell(\theta) - \ell(\theta_*)  \} - \E_{0}^n \{ \ell(\theta) - \ell(\theta_*)  \}  ]/n $ and $\bar{D}_n(\theta,\theta_*) =  \E_{0}^n \{ \ell(\theta) - \ell(\theta_*)  \} / n $.
	Note that Assumption R1 implies that $\bar{D}_{n}(\theta,\theta_*)$ is concave in $\|\theta - \theta_*\| < \delta_1$ with Hessian $- I_{\theta_*}/n$. Since $I_{\theta_*}/n$ is positive definite from Assumption \ref{cond:2}, for sufficiently small choice of $\rho>0$ it follows 
	\begin{equation*}
		\begin{aligned}
			\bar{D}_{n}(\theta,\theta_*) \leq &  - \rho \cdot i_{st}(\theta - \theta_*)_s (\theta - \theta_*)_t/n\\
			\leq & - \rho \lambda_{\textsc{min}}( I_{\theta_*}/n)  (\theta - \theta_*)_s  (\theta - \theta_*)_s \leq - \rho \eta_1 \| \theta - \theta_* \|^2.
		\end{aligned}
	\end{equation*}
	In addition, define the event \smash{$A_{n,5} = \{ \sup_{0<\|\theta - \theta_*\| < \delta_1} D_n(\theta,\theta_*)/\|\theta - \theta_*\| < \tilde c_1 \tilde M_n /\sqrt{n} \}$} for a sufficiently large constant $\tilde c_1$, and a sequence $\tilde M_n$ going to infinity arbitrary slow. Notice that $P_0^n( A_{n,5}   ) = 1 -  o(1)$ from Assumption R2. 
	As a consequence, conditioned on $A_{n,5}$, we have for every $\theta$ which meets $0<\|\theta - \theta_*\| < \delta_1$, that 
	\begin{equation*}
		\begin{split}
			D_{n}(\theta,\theta_*) + \bar{D}_{n}(\theta,\theta_*) &\leq \tilde c_1 \|\theta - \theta_*\| \tilde M_n/\sqrt{n} - \rho \eta_1 \| \theta - \theta_* \|^2 \\
			&= \{ \tilde c_1 \tilde{M}_n/(\| \theta - \theta_* \| \sqrt{n}) - \rho \eta_1 \} \| \theta - \theta_* \|^2. 
		\end{split}
	\end{equation*}
	Since $\tilde M_n$ can be chosen such that $\tilde M_n/M_n \to 0 $, the first component in the right--hand--side of the last display becomes always negative for $n$ large enough and the whole expression is asymptotically maximized when $\| \theta - \theta_* \|$ is at its minimum. This implies
	$$ \textstyle \sup_{ M_n/\sqrt{n} < \|\theta - \theta_*\| <  \delta_1  } 	D_{n}(\theta,\theta_*) + \bar{D}_{n}(\theta,\theta_*) \leq  - c_5 M_n^2/n, $$ 
	for some $c_5>0$, with $P_0^n$--probability tending to one. This concludes the proof.
\end{proof}

%%%%%%%%%%%%%%%%%%%%%%%%%%%%%%%%%%%%%%%%%%%%%%%%%%%%%%%%%%%%%

%%%%%%%%%%%%%%%%%%%%%%%%%%%%%%%%%%%%%%%%%%%%%%%%%%%%%%%%%%%%%

\begin{proof}[Proof of Theorem~\ref{thm:marginal}]
	Define	\smash{$\hat K_{n,\cc} = \{ \hat h_{\cc} \, : \, \|\hat h_{\cc}\| < 2 M_n \} $}. 
	The \textsc{tv} distance among \smash{$\pi_{n,\cc}$ and $\hat p_{\textsc{sks},\cc}^n$} is \smash{$ (1/2) \int |\pi_{n,\cc}(\hat h_{\cc}) - \hat p_{\textsc{sks},\cc}^n(\hat h_{\cc}) |d  \hat h_{\cc}.$} 
	Adding and subtracting~$\int\hat  p_{\textsc{sks}}^n(\hat h)d \hat h_{\bar \cc}$, and by exploiting Jensen's and triangle inequality, we obtain the following upper bound
	\begin{equation*}
		\begin{aligned}
			\int |\pi_{n}(\hat h) - \hat p_{\textsc{sks}}^n(\hat h)|d  \hat h 
			 			+ \int |\int \hat p_{\textsc{sks}}^n(\hat h)d \hat h_{\bar \cc} - \hat p_{\textsc{sks},\cc}^n(\hat h_{\cc}) |d \hat h_{\cc}.
		\end{aligned}
	\end{equation*} 
	It follows from Theorem \ref{thm:modal} that 
	\begin{equation} \label{margin:help1}
		\int |\pi_{n}(\hat h) - \hat p_{\textsc{sks}}^n(\hat h)|d  \hat h = O_{P_0^n}(M_n^{c_8}/n),
	\end{equation}
	for some $c_8 >0$. Therefore, it is sufficient to study $ \int |\int \hat p_{\textsc{sks}}^n(\hat h)d \hat h_{\bar \cc} - p_{\textsc{sks},\cc}^n(\hat h_{\cc}) |d \hat h_{\cc}.$ Note that, as a direct consequence of Equation~\eqref{skew:marginal}, we have
	$$ \int \hat p_{\textsc{sks}}^n(\hat h)d \hat h_{\bar \cc} - \hat p_{\textsc{sks},\cc}^n(\hat h_{\cc})  = 2 \phi_{d_{\cc}}(\hat h_{\cc}; 0, \hat \Omega_{ \cc \cc}) \E_{\hat h_{\bar \cc }| \hat h_{\cc}}[ F(\hat{\alpha}_{\eta}(\hat h)) - F(\E_{\hat h_{\bar \cc } | \hat h_{\cc} }\{\hat{\alpha}_{\eta}(\hat h)\}) ].$$
	Let $C_{n,0}  = \{\lambda_{\textsc{min}}( \hat \Omega_{ \cc \cc} ) > \eta_{1,\cc} \} \cap \{\lambda_{\textsc{max}}(  \hat \Omega_{ \cc \cc} ) < \eta_{2,\cc} \} $ for some fixed $\eta_{1,\cc},\eta_{2,\cc}>0$. From Assumption \ref{M2} and Lemma \ref{lemma:eigen:restricted:matrix} it follows $P_0^n C_{n,0} = 1-o(1)$. Hence, let us condition on $C_{n,0}$, and split the integral 
	\begin{equation*}
		\begin{aligned}
			& \int \Big|2 \phi_{d_{\cc}}(\hat h_{\cc}; 0,\hat \Omega_{ \cc \cc}) \E_{\hat h_{\bar \cc }| \hat h_{\cc}}[ F(\hat{\alpha}_{\eta}(\hat h)) - F(\E_{\hat h_{\bar \cc } | \hat h_{\cc} }\{\hat{\alpha}_{\eta}(\hat h)\}) ] \Big| d\hat h_{\cc} \mathbbm{1}_{ C_{n,0}},
		\end{aligned}
	\end{equation*}
	between $\hat K_{n,\cc}$ and $\hat K_{n,\cc}^c$. From the  tail behavior of the  Gaussian distribution and the  boundedness of \smash{$ \E_{\hat h_{\bar \cc }| \hat h_{\cc}}[ F(\hat{\alpha}_{\eta}(\hat h)) - F(\E_{\hat h_{\bar \cc } | \hat h_{\cc} }\{\hat{\alpha}_{\eta}(\hat h)\})]$} if follows that 
		$$ \int_{\hat h_{\cc} \in \hat K_{n,\cc}^c } \Big|2 \phi_{d_{\cc}}(\hat h_{\cc}; 0,\hat \Omega_{ \cc \cc}) \E_{\hat h_{\bar \cc }| \hat h_{\cc}}[ F(\hat{\alpha}_{\eta}(\hat h)) - F(\E_{\hat h_{\bar \cc } | \hat h_{\cc} }\{\hat{\alpha}_{\eta}(\hat h)\}) ] \Big| d\hat h_{\cc}  \mathbbm{1}_{ C_{n,0}} \leq  4 e^{- \tilde c_1 M_n^2},$$
	for some constant $\tilde c_1 >0$, which in turn implies 
	\begin{equation} \label{margin:help2}
	 \scalemath{0.97}{	 \int_{\hat h_{\cc} \in \hat K_{n,\cc}^c } \Big|2 \phi_{d_{\cc}}(\hat h_{\cc}; 0,\hat \Omega_{ \cc \cc}) \E_{\hat h_{\bar \cc }| \hat h_{\cc}}[ F(\hat{\alpha}_{\eta}(\hat h)) - F(\E_{\hat h_{\bar \cc } | \hat h_{\cc} }\{\hat{\alpha}_{\eta}(\hat h)\}) ] \Big| d\hat h_{\cc}  = O_{P_0^n}(n^{-1}),}
	\end{equation}
	for a sufficiently large constant $c_0$ in $M_n$.
	In addition, from Lemma \ref{lemma:joint:margin:diff} it follows  that
	$$ \textstyle \sup_{\hat h_{\cc} \in \hat K_{n,\cc}}|  \E_{\hat h_{\bar \cc }| \hat h_{\cc}}[ F(\hat{\alpha}_{\eta}(\hat h)) - F(\E_{\hat h_{\bar \cc } |\hat h_{\cc} }\{\hat{\alpha}_{\eta}(\hat h)\}) ] |= O_{P_0^n}(M_n^{c_{10}}/n),$$
	for some $c_{10} > 0$, which implies
		\begin{equation} \label{margin:help3}
			\begin{aligned}
					\int_{\hat h_{\cc} \in \hat K_{n,\cc} }  \Big |2 \phi_{d_{\cc}}(\hat h_{\cc}; 0,\hat \Omega_{ \cc \cc}) \E_{\hat h_{\bar \cc }| \hat h_{\cc}}[ F(\alpha_{\eta}(\hat h)) - F(\E_{\hat h_{\bar \cc } | \hat h_{\cc} }\{\alpha_{\eta}(\hat h)\}) ] \Big| d\hat h_{\cc}  \mathbbm{1}_{ C_{n,0}} = O_{P_0^n}(M_{n}^{c_{10}}/n).
			\end{aligned}
		\end{equation}
	The combination of \eqref{margin:help1}, \eqref{margin:help2} and \eqref{margin:help3} concludes the proof with $c_9 = c_8 \vee c_{10}$.
\end{proof}

%%%%%%%%%%%%%%%%%%%%%%%%%%%%%%%%%%%%%%%%%%%%%%%%%%%%%%%%%%%%%

\section{Technical lemmas}
In the following, we state and prove the technical lemmas required for the proofs of the theoretical results in the article. 

\vspace{-4pt}
\begin{lemma} \label{lemma:1}
	Let $F$ be the cdf of a univariate random variable on $\mathbbm{R}$ such that $F(-x) = 1-F(x)$, $F(0) =  1/2$ and $F(x) = F(0) + \eta x + O(x^2)$ for some $\eta \in \mathbbm{R}$.	
	Under Assumptions \ref{cond:LAN} and \ref{cond:prior} it follows that 
	\begin{equation}
	 \scalemath{0.92}{	\begin{aligned}
			&\log \Big[\frac{p_{\theta_* + \delta_n h }^n}{p_{\theta_*}^n}(X^n)\frac{ \pi(\theta_* + \delta_n h )}{\pi(\theta_*)} \Big]+ \frac{\omega^{-1}_{st} }{2}(h-\xi)_s  (h-\xi)_t - \log 2 w(h -\xi) + \delta = r_{n,4}(h),
		\end{aligned}}
		\vspace{-5pt}
	\end{equation}
	with $\delta$ a constant not depending on $h,$ $\xi = \Delta_{\theta_*}^n + \delta_n (V_{\theta_*}^n)^{-1} \log \pi^{(1)} ,$ $\Omega^{-1}=[\omega^{-1}_{st}] = [v_{st}^n - \delta_n \smash{a^{(3),n}_{\theta_*,stl} \xi_l}]$, and $ w(h -\xi)=F(\alpha_{\eta}(h-\xi))$, where $\alpha_{\eta}(h-\xi) = (\delta_n/12 \eta)\smash{a^{(3),n}_{\theta_*, stl}} \{  (h-\xi)_s   (h-\xi)_t  (h-\xi)_l + 3  (h-\xi)_s\xi_t \xi_l    \}$. Moreover,
	\begin{equation}
	\textstyle	r_{n,4} := \sup_{h \in K_n}r_{n,4}(h) = O_{P_0^n}(\delta_n^2 M_n^{c_3}),
			\vspace{-3pt}
	\end{equation}  
	for some constant $c_3>0$.
\end{lemma}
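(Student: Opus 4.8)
The plan is to establish Lemma~\ref{lemma:1} by a careful bookkeeping of the Taylor expansions in Assumptions~\ref{cond:LAN}--\ref{cond:prior}, followed by the ``exp $\approx$ $2F$'' trick already used in Section~\ref{sec_21}. \textbf{Step 1: assemble the log--posterior expansion.} Combining Assumption~\ref{cond:LAN} and Assumption~\ref{cond:prior}, on the set $K_n$ we can write
\begin{equation*}
\log\Big[\tfrac{p_{\theta_*+\delta_n h}^n}{p_{\theta_*}^n}(X^n)\tfrac{\pi(\theta_*+\delta_n h)}{\pi(\theta_*)}\Big]
= h_s v_{st}^n \Delta_{\theta_*,t}^n - \tfrac12 v_{st}^n h_s h_t + \tfrac{\delta_n}{6} a^{(3),n}_{\theta_*,stl} h_s h_t h_l + \delta_n h_s \log\pi^{(1)}_s + \tilde r_n(h),
\end{equation*}
with $\sup_{h\in K_n}|\tilde r_n(h)| = O_{P_0^n}(\delta_n^2 M_n^{c_1\vee c_2})$. \textbf{Step 2: complete the square and shift the cubic.} I would absorb the linear term $\delta_n h_s\log\pi^{(1)}_s$ together with $h_s v_{st}^n \Delta_{\theta_*,t}^n$ into a completed square in $(h-\xi)$ with $\xi = \Delta_{\theta_*}^n + \delta_n (V_{\theta_*}^n)^{-1}\log\pi^{(1)}$; this exactly mirrors the univariate computation below \eqref{eq:help00}. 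The price of completing the square is a constant (the $\delta$ of the statement, independent of $h$) plus cross terms; crucially the cubic $\tfrac{\delta_n}{6} a^{(3),n}_{\theta_*,stl} h_s h_t h_l$ must be re--expanded around $\xi$ by writing $h = (h-\xi)+\xi$. Expanding $\{(h-\xi)+\xi\}^3$ produces four groups: the pure cubic $(h-\xi)_s(h-\xi)_t(h-\xi)_l$, the mixed quadratic $3(h-\xi)_s(h-\xi)_t\xi_l$, the mixed linear $3(h-\xi)_s\xi_t\xi_l$, and the constant $\xi_s\xi_t\xi_l$. The constant goes into $\delta$. The mixed quadratic term $\tfrac{\delta_n}{2} a^{(3),n}_{\theta_*,stl}(h-\xi)_s(h-\xi)_t\xi_l$ is precisely what gets reabsorbed into the Gaussian quadratic form, giving the stated $\Omega^{-1} = [v_{st}^n - \delta_n a^{(3),n}_{\theta_*,stl}\xi_l]$. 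What remains of the cubic contribution is then the odd--in--$(h-\xi)$ polynomial $\tfrac{\delta_n}{6} a^{(3),n}_{\theta_*,stl}\{(h-\xi)_s(h-\xi)_t(h-\xi)_l + 3(h-\xi)_s\xi_t\xi_l\} = 2\eta\,\alpha_\eta(h-\xi)$ with $\alpha_\eta$ as in the statement.

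\textbf{Step 3: replace $\exp(2\eta\alpha_\eta)$ by $2F(\alpha_\eta)$.} On $K_n$, with $\|h\|\le M_n$ and $\|\xi\| = O_{P_0^n}(1)$ (which follows from $\Delta_{\theta_*}^n = O_{P_0^n}(1)$, the boundedness of $\log\pi^{(1)}$, and $\lambda_{\min}(V_{\theta_*}^n)>\eta_1^*$ on $A_{n,0}$), one checks $\alpha_\eta(h-\xi) = O_{P_0^n}(\delta_n M_n^{3})$, so it tends to zero uniformly on $K_n$. Then I invoke $\exp(x) = 1 + x + O(x^2)$ and $2F(x) = 1 + 2\eta x + O(x^2)$ as $x\to 0$: applying the first with $x = 2\eta\alpha_\eta(h-\xi)$ and the second with $x=\alpha_\eta(h-\xi)$ gives $\exp(2\eta\alpha_\eta(h-\xi)) = 2F(\alpha_\eta(h-\xi)) + O(\alpha_\eta(h-\xi)^2)$, hence
\begin{equation*}
\tfrac{\delta_n}{6} a^{(3),n}_{\theta_*,stl}\{(h-\xi)_s(h-\xi)_t(h-\xi)_l + 3(h-\xi)_s\xi_t\xi_l\}
= \log\big(2F(\alpha_\eta(h-\xi))\big) + O_{P_0^n}(\delta_n^2 M_n^{6}),
\end{equation*}
where one also uses that $2F(x)$ is bounded away from $0$ for $x$ in a neighborhood of $0$ so that $\log(2F(\cdot))$ is well defined and Lipschitz there. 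Collecting the cubic--minus--$\log 2w$ piece and renaming $w(h-\xi) = F(\alpha_\eta(h-\xi))$ produces the claimed identity, and the residual $r_{n,4}(h)$ is the sum of $\tilde r_n(h)$, the $O(\alpha_\eta^2)$ term from the $\exp$/$2F$ comparison, and any lower--order leftovers from Step 2. Taking suprema over $K_n$ yields $r_{n,4} = O_{P_0^n}(\delta_n^2 M_n^{c_3})$ with $c_3 = (c_1\vee c_2)\vee 6$ (or whatever the sharp exponent turns out to be after Step 2).

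\textbf{Main obstacle.} The delicate point is not any single estimate but the bookkeeping in Step 2: one must verify that completing the square, re--expanding the cubic, and folding the mixed--quadratic term into $\Omega^{-1}$ all produce \emph{only} an $h$--independent constant plus terms that are either the desired odd polynomial or genuinely of order $\delta_n^2 M_n^{c}$ on $K_n$ --- in particular that $\Omega^{-1}$ as defined stays positive definite (this is where Lemma~\ref{lemma:eigen:J} and $\lambda_{\min}(V_{\theta_*}^n)>\eta_1^*$ enter, since $\delta_n a^{(3),n}_{\theta_*,stl}\xi_l$ is an $O_{P_0^n}(\delta_n)$ perturbation) so that $\phi_d(\cdot;\xi,\Omega)$ and $\log(2w(\cdot))$ are both legitimate objects. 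Once the algebra is organized so that every discarded quantity is manifestly $O_{P_0^n}(\delta_n^2 M_n^{c})$ uniformly on $K_n$, the conclusion follows.
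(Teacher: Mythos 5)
Your proposal is correct and follows essentially the same route as the paper's proof: combine the expansions of Assumptions~\ref{cond:LAN}--\ref{cond:prior}, complete the square in $(h-\xi)$, re--expand the cubic around $\xi$ so the mixed quadratic term is absorbed into $\Omega^{-1}$ and the constant into $\delta$, and then identify the remaining odd polynomial with $\log 2F(\alpha_\eta(h-\xi))$ up to an $O_{P_0^n}(\delta_n^2\{\|h\|^6\vee 1\})$ remainder, uniformly on $K_n$. The only cosmetic difference is that the paper expands $F$ directly and applies $\log(1+x)=x+O(x^2)$ rather than phrasing it as an $\exp$--versus--$2F$ comparison, which yields the same bound and the same exponent bookkeeping.
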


\vspace{-16pt}
\begin{proof}
	We start by noting that Assumptions \ref{cond:LAN} and \ref{cond:prior} imply
	\begin{equation} \label{lemma1:help1}
		\begin{aligned}
			& \log\Big[\frac{p_{\theta_* + \delta_n h }^n}{p_{\theta_*}^n}(X^n)\frac{ \pi(\theta_* + \delta_n h )}{\pi(\theta_*)}\Big] - h_s v_{st}^n \Delta_{\theta_*, t}^n - \delta_n h_s \log \pi^{(1)}_s  \\
			& \qquad \qquad \qquad   + \frac{1}{2}   v_{st}^n h_s h_{t} - \frac{\delta_n}{6} a^{(3),n}_{\theta_*,stl} h_s h_t h_l  = r_{n,1}(h) + r_{n,2}(h).
		\end{aligned}
			\vspace{-5pt}
	\end{equation}
	Furthermore, note that
	\begin{equation}  \label{lemma1:help2}
		h_s v_{st}^n \Delta_{\theta_*, t}^n + \delta_n h_s \log \pi^{(1)}_s - (1/2)v_{st}^n h_s  h_{t} = -  v_{st}^n(h-\xi)_s   (h-\xi)_t/2 + \delta_1,
	\end{equation}
	where $\xi = \Delta_{\theta_*}^n + \delta_n (V_{\theta_*}^n)^{-1} \log \pi^{(1)}$ and $\delta_1$ is a quantity not depending on $h$.
	
	Let us now add and subtract $\xi$ from $h$ in the three dimensional array part, obtaining
	\begin{equation}  \label{lemma1:help3}
		\begin{aligned}
			\frac{\delta_n}{6} a^{(3),n}_{\theta_*,stl} h_s h_t h_l  = &\  \frac{ \delta_n}{6} a^{(3),n}_{\theta_*,stl}(h-\xi)_s (h-\xi)_t (h-\xi)_l+\frac{3 \delta_n}{6} a^{(3),n}_{\theta_*,stl} (h-\xi)_s \xi_t \xi_l\\
			&+\frac{3 \delta_n}{6}  a^{(3),n}_{\theta_*,stl} (h-\xi)_s (h-\xi)_t \xi_l+ \delta_2,
		\end{aligned}
	\end{equation}  
	where $\delta_2$ does not depend on $h$. Combining \eqref{lemma1:help2} and \eqref{lemma1:help3} it is possible to rewrite \eqref{lemma1:help1} as
	\begin{equation} \label{lemma1:help4}
		\begin{aligned}
			& \log \Big[ \frac{p_{\theta_* + \delta_n h }^n}{p_{\theta_*}^n}(X^n)\frac{ \pi(\theta_* + \delta_n h )}{\pi(\theta_*)} \Big] +  \omega^{-1}_{st}(h-\xi)_s   (h-\xi)_t/2  \\
			&\quad  - \frac{\delta_n}{6} \Psi^{(3)}_{stl} (h-\xi)_s (h-\xi)_t (h-\xi)_l-\frac{3 \delta_n}{6} \Psi^{(1)}_{s} (h-\xi)_s  + \delta   = r_{n,1}(h) + r_{n,2}(h),
		\end{aligned}
	\end{equation}
	with $ \Omega^{-1} = V_{\theta_*}^n - \delta_n \Psi^{(2)}, $ $\Psi^{(3)} = [a^{(3),n}_{\theta_*,stl}]$, $\Psi^{(2)} = [  a^{(3),n}_{\theta_*,stl}  \xi_l ] $, $\Psi^{(1)} =  [a^{(3),n}_{\theta_*,stl} \xi_t \xi_l ] $,  $\delta = - \delta_1 - \delta_2 $. 
	
	To conclude the proof of the lemma note that, Assumption \ref{cond:LAN}, the fact that the parameter dimension $d$ is fixed, and the Cauchy--Schwarz inequality imply 
	$$ \frac{\delta_n}{6} \Psi^{(3)}_{stl} (h-\xi)_s (h-\xi)_t (h-\xi)_l + \frac{3 \delta_n}{6} \Psi^{(1)}_{s} (h-\xi)_s = O_{P_0^n}(\delta_n\{\|h\|^3 \vee 1 \}). $$
	By exploiting the conditions imposed on $F(\cdot)$, let us write
	\begin{equation*}
		\begin{aligned}
			&F[  (\delta_n/6) \big \lbrace \Psi^{(3)}_{stl} (h-\xi)_s (h-\xi)_t (h-\xi)_l + 3 \Psi^{(1)}_{s} (h-\xi)_s \big \rbrace  ]   \\
			&=  \frac{1}{2}\left[ 1 + 2 \eta \frac{\delta_n}{6} \big \lbrace \Psi^{(3)}_{stl} (h-\xi)_s (h-\xi)_t (h-\xi)_l + 3 \Psi^{(1)}_{s} (h-\xi)_s \big \rbrace  + O_{P_0^n}(\delta_n^2 \{\|h\|^6 \vee  1 \}) \right] 
		\end{aligned}
	\end{equation*}
	 since the argument of $F(\cdot)$ converges to zero in probability. An additional Taylor expansion, this time $\log(1+x) = x +O(x^2)$ for $x \to 0,$ gives 
	\begin{equation} \label{lemma1:help5}
		\begin{aligned}
			& \log 2F	[ (\delta_n/12 \eta) \big \lbrace \Psi^{(3)}_{stl} (h-\xi)_s (h-\xi)_t (h-\xi)_l + 3  \Psi^{(1)}_{s} (h-\xi)_s \big \rbrace ] \\
			& = \log [  1 + (\delta_n/6) \big \lbrace \Psi^{(3)}_{stl} (h-\xi)_s (h-\xi)_t (h-\xi)_l + 3 \Psi^{(1)}_{s} (h-\xi)_s \big \rbrace  + O_{P_0^n}(\delta_n^2 \{\|h\|^6 \vee  1 \}) ] \\
			&=   (\delta_n/6) \big \lbrace \Psi^{(3)}_{stl} (h-\xi)_s (h-\xi)_t (h-\xi)_l + 3 \Psi^{(1)}_{s} (h-\xi)_s \big \rbrace  + \tilde r_{n,1}(h)
		\end{aligned}
	\end{equation}
    where the remainder term $ \tilde r_{n,1}(h)$ is $O_{P_0^n}(\delta_n^2 \{\|h\|^6 \vee  1 \})$.
     Note that, when restricted on $K_n$,  $\tilde r_{n,1} = \sup_{h \in K_n}\tilde r_{n,1}(h) = O_{P_0^n}(\delta_n^2 M_n^6)$. This fact combined with \eqref{lemma1:help4} and \eqref{lemma1:help5} gives
	\begin{equation*}
		\begin{aligned}
			&\log  \Big[  \frac{p_{\theta_* + \delta_n h }}{p_{\theta_*}}(X^n)\frac{ \pi(\theta_* + \delta_n h )}{\pi(\theta_*)} \Big]+ \omega^{-1}_{st} (h-\xi)_s  (h-\xi)_t/2 \\
			& \quad - \log 2 F[ (\delta_n/12 \eta) \{\Psi^{(1)}_s (h-\xi)_s + \Psi^{(3)}_{stl} (h-\xi)_s   (h-\xi)_t  (h-\xi)_l \} ] + \delta = r_{n,4}(h),
		\end{aligned}
	\end{equation*}
	where $r_{n,4}(h) = r_{n,1}(h) + r_{n,2}(h) - \tilde r_{n,1}(h).$ Then, Assumptions \ref{cond:LAN}--\ref{cond:prior}  imply that \begin{equation}
		\textstyle \sup_{h \in K_n} r_{n,4}(h) = O_{P_0^n}(\delta_n^2 M_n^{c_3}),
	\end{equation}  
	for some constant $c_3>0$, concluding the proof.
\end{proof}

%%%%%%%%%%%%%%%%%%%%%%%%%%%%%%%%%%%%%%%%%%%%%%%%%%%%%%%%%%%%%

\begin{lemma} \label{lemma:eigen:J}
    Let $\hat A$ and $A$ denote two $d \times d$ real symmetric matrices. Suppose that  $\hat A$ is random with entries $ 	\hat a_{st} = O_{P_0^n}(1)$, whereas $A$ is non--random and has elements $	a_{st} = O(1) $. Moreover, assume
    \begin{equation} \label{cond:diff:matrix}
    	a_{st} - \hat a_{st} = O_{P_0^n}(\delta_n),
    \end{equation}  
   for some norming rate $ \delta_n \to 0$ and $s,t \in \{1,\dots,d\}$.
    If there exist two positive constants $\eta_1$ and $\eta_2$ such that $\lambda_{\textsc{min}}(A) > \eta_1$ and $\lambda_{\textsc{max}}(A) < \eta_2$ then, with $P_{0}^n$--probability tending to $1$, there exist two positive constants $\eta_1^*$ and $\eta_2^*$ such that $\lambda_{\textsc{min}}(\hat A)> \eta_1^*$ and $\lambda_{\textsc{max}}(\hat A) < \eta_2^*$. 
\end{lemma}

\begin{proof}
	Leveraging \eqref{cond:diff:matrix}, let us first notice that  \smash{$\hat A  = A + R$} with $R$ having entries of order \smash{$\mathcal{O}_{P_{0}^n} ( \delta_n).$} As a consequence, there exist constants $\tilde c_1 >0 $ and $\tilde c_2 > 1$   such that 
	$$ P_{0}^n( | R_{{st}} | > \tilde c_1 \delta_n^{\tilde c_2} ) = o(1),$$
	for every $s,t=1, \ldots, d$. Define now the matrix $M$ having entries $ M_{st} = | R_{st} | \land  \tilde c_1 \delta_n^{\tilde c_2} $ for  $s,t=1, \ldots, d$.
	From Wielandt’s theorem \citep{zwillinger2007table}, with probability $1-o(1),$ the spectral radius of $M$ is an upper bound of the spectral radius of $R.$ Moreover, since $M$ is a non--negative matrix, the Perron--Frobenius theorem \citep{perron1907theorie,frobenius1912matrizen} implies that the largest eigenvalue in absolute value is bounded by constant times $ \delta_n^{\tilde c_2} .$
	Since both $A$ and $R$ are real symmetric matrices, in view of the Weyl's inequalities \citep[e.g.,][equation (1.54)]{tao2011topics}, the eigenvalues of $\hat A$ and $A$ can differ at most by constant times $\delta_n^{\tilde c_2}$ with probability $1-o(1).$ As a consequence, since the lemma assumes the existence of two positive constants $\eta_1$ and $\eta_2$ such that $\lambda_{\textsc{min}}(A) > \eta_1$ and $\lambda_{\textsc{max}}(A) < \eta_2$, it follows that there exist  $\eta_1^*,\eta_2^* > 0 $ such that, with probability  $1-o(1),$ $\lambda_{\textsc{min}}(\hat A) > \eta_1^*$ and $\lambda_{\textsc{max}}(\hat A) < \eta_2^*$.
\end{proof}

%%%%%%%%%%%%%%%%%%%%%%%%%%%%%%%%%%%%%%%%%%%%%%%%%%%%%%%%%%%%%

%%%%%%%%%%%%%%%%%%%%%%%%%%%%%%%%%%%%%%%%%%%%%%%%%%%%%%%%%%%%%

\vspace{4pt}
\begin{lemma} \label{lemma:eigen:restricted:matrix}
	Let $A$ be a $d \times d$ symmetric positive definite matrix satisfying $\lambda_{\textsc{min}}(A) \ge \eta_{1A}$ and $\lambda_{\textsc{max}}(A) \le \eta_{2A}$. Let $\mathcal{S} \subseteq \{1,\dots d\}$ be a set of indexes having cardinality $d_*$, and $B$ be the $d_* \times d_*$ submatrix obtained by keeping only rows and columns of $A$ whose position is in $\mathcal{S}$. Then it holds that $\lambda_{\textsc{min}}(B) \ge \eta_{1A}$ 
	and $\lambda_{\textsc{max}}(B) \le \eta_{2A}$.
\end{lemma}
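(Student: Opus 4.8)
The plan is to prove the claim via the variational (Rayleigh--Ritz) characterization of the extreme eigenvalues of a symmetric matrix, which sidesteps any need to manipulate characteristic polynomials. First I would fix an enumeration $\mathcal{S} = \{ s_1 < s_2 < \dots < s_{d_*} \} \subseteq \{1,\dots,d\}$ and introduce the linear embedding $\iota : \mathbb{R}^{d_*} \to \mathbb{R}^d$ that maps the $k$-th coordinate onto the $s_k$-th coordinate and sets all remaining entries to zero. The elementary identity underpinning the whole argument is that, for every $y \in \mathbb{R}^{d_*}$,
\[
y^\intercal B y \;=\; \sum_{k,l=1}^{d_*} B_{kl}\, y_k y_l \;=\; \sum_{k,l=1}^{d_*} A_{s_k s_l}\, y_k y_l \;=\; (\iota y)^\intercal A (\iota y),
\]
which follows directly from the definition of $B$ as the principal submatrix of $A$ indexed by $\mathcal{S}$; moreover $\iota$ is an isometry, so $\| \iota y \| = \| y \|$.

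Then I would invoke the Courant--Fischer bounds for $A$: for every nonzero $y \in \mathbb{R}^{d_*}$,
\[
\lambda_{\textsc{min}}(A) \;\le\; \frac{(\iota y)^\intercal A (\iota y)}{\| \iota y \|^2} \;=\; \frac{y^\intercal B y}{\| y \|^2} \;\le\; \lambda_{\textsc{max}}(A).
\]
Taking the infimum over nonzero $y$ in the middle expression yields $\lambda_{\textsc{min}}(B) \ge \lambda_{\textsc{min}}(A) \ge \eta_{1A}$, and taking the supremum yields $\lambda_{\textsc{max}}(B) \le \lambda_{\textsc{max}}(A) \le \eta_{2A}$; in particular $B$ inherits symmetry and positive definiteness, which is exactly what is needed when this lemma is applied in the proof of Theorem~\ref{thm:marginal}. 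As an alternative one could simply cite the Cauchy interlacing theorem, which gives the sharper conclusion $\lambda_j(A) \le \lambda_j(B) \le \lambda_{j + d - d_*}(A)$ for all $j$, but the Rayleigh-quotient derivation above is fully self-contained and suffices.

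I do not anticipate any genuine obstacle here: the statement is a textbook fact about principal submatrices. The only point that warrants a line of care is the bookkeeping of the embedding $\iota$ and the verification of the identity $y^\intercal B y = (\iota y)^\intercal A (\iota y)$ at the entrywise level, after which the eigenvalue bounds are immediate and use nothing about $A$ beyond the assumed two-sided spectral bounds.
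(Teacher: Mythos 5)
Your proposal is correct and follows essentially the same route as the paper: the paper writes $B = SAS^{\intercal}$ with a selection matrix $S$ (the transpose of your embedding $\iota$) and bounds the eigenvalues of $B$ via the Rayleigh quotient of $A$ restricted to the embedded subspace, exactly as you do. No gaps to report.
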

\begin{proof}
	Without loss of generality assume that the elements of $\mathcal{S}$ are in increasing order. Note, also, that $B = S A S^T $ where $S$ is a $d_* \times d$ matrix having entries $s_{ij} = 1$ if $j = \mathcal{S}_i$, where $\mathcal{S}_i$ denotes the $i$-th element of $\mathcal{S}$, and $s_{ij} = 0$ otherwise. Recall that, from the relation between minimum and maximum eigenvalues and the Rayleigh quotient it follows that 
	\begin{equation*}
		\lambda_{\textsc{min}}(A) = \min_{ x \in \mathbb{R}^d } \frac{x^T A x}{x^T x}, \quad \text{and} \quad 	\lambda_{\textsc{max}}(A) = \max_{ x \in \mathbb{R}^d } \frac{x^T A x}{x^T x}.
	\end{equation*}
	Similarly, leveraging $S S^T = I_{d_*}$, it holds for $B$ that
	\begin{equation*}
		\begin{aligned}
			\lambda_{\textsc{min}}(B) = \min_{ x_* \in \mathbb{R}^{d_*} } \frac{x_*^T B x_*}{x_*^T x_*} =  \min_{ x_* \in \mathbb{R}^{d_*} } \frac{x_*^T S A S^T  x_*}{x_*^T  S  S^T x_*} \ge  \min_{ x \in \mathbb{R}^d } \frac{x^T A x}{x^T x} = 	\lambda_{\textsc{min}}(A),
		\end{aligned}
	\end{equation*}
	with the inequality that follows from the fact that $\{ x \in \mathbbm{R}^d \, : \, x =  x_*^T  S \, \text{for}\, x_* \in \mathbb{R}^{d_*} \} \subseteq \mathbb{R}^{d}.$ Following the same line of reasoning it is possible to prove $\lambda_{\textsc{max}}(A) \geq \lambda_{\textsc{max}}(B)$. This~concludes the proof.	
\end{proof}

%%%%%%%%%%%%%%%%%%%%%%%%%%%%%%%%%%%%%%%%%%%%%%%%%%%%%%%%%%%%%

%%%%%%%%%%%%%%%%%%%%%%%%%%%%%%%%%%%%%%%%%%%%%%%%%%%%%%%%%%%%%

\vspace{4pt}
\begin{lemma} \label{lemma:joint:margin:diff}
	Under the assumptions stated in Theorem \ref{thm:marginal}, for every 	univariate cdf $F(\cdot)$ satisfying $F(-x) = 1-F(x)$, $F(0) =  1/2$ and $F(x) = F(0) + \eta x + O(x^2)$,  it holds
	\begin{equation*}
	 \sup_{\hat h_{\cc} \in \hat K_{n,\cc}} \big| \E_{\hat h_{\bar \cc }| \hat h_{\cc}} F(\hat{\alpha}_{\eta}(\hat h)) - F(\E_{\hat h_{\bar \cc } |\hat  h_{\cc} }\{\hat{\alpha}_{\eta}(\hat h)\}) \big| =  O_{P_0^n}(M_n^{c_{10}}/n),
	\end{equation*}
	where  $\hat K_{n,\cc} = \{ \hat h_{\cc} \, : \, \|\hat h_{\cc}\| < 2 M_n \}$, $c_{10} $ denotes a positive constant and $\hat{\alpha}_{\eta}(\hat h)$ is defined as  in Section \ref{sec_31}. 
\end{lemma}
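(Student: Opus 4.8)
The plan is to expand $F$ to first order around the origin, so that the affine part cancels in the difference of the two terms, and then to control the resulting quadratic remainder by splitting the conditional Gaussian expectation over $\hat h_{\bar\cc}$ into a bulk part, where the cubic argument of $F$ is uniformly $o(1)$, and a tail part handled by Gaussian concentration. Concretely, I would first write $F(x) = 1/2 + \eta x + R(x)$, where by the hypotheses on $F$ there exist $\epsilon_0>0$ and $C_F>0$ with $|R(x)| \le C_F x^2$ for $|x|\le\epsilon_0$, and trivially $|R(x)|\le 1/2 + |\eta|\,|x|$ for every $x\in\mathbbm{R}$. Since the affine part passes unchanged through the expectation, for every $\hat h_{\cc}$,
\begin{equation*}
\E_{\hat h_{\bar \cc }| \hat h_{\cc}} F(\hat{\alpha}_{\eta}(\hat h)) - F\big(\E_{\hat h_{\bar \cc } |\hat  h_{\cc} }\{\hat{\alpha}_{\eta}(\hat h)\}\big) = \E_{\hat h_{\bar \cc }| \hat h_{\cc}} R(\hat{\alpha}_{\eta}(\hat h)) - R\big(\E_{\hat h_{\bar \cc } |\hat  h_{\cc} }\{\hat{\alpha}_{\eta}(\hat h)\}\big),
\end{equation*}
and I recall from Section~\ref{sec_marginal} that $\E_{\hat h_{\bar \cc } |\hat  h_{\cc} }\{\hat{\alpha}_{\eta}(\hat h)\} = \alpha_{\eta,\cc}(\hat h_{\cc})$ as in \eqref{def:param:ske:margin}, and that the conditional law of $\hat h_{\bar\cc}$ given $\hat h_{\cc}$ is $\phi_{d-d_\cc}(\cdot\,;\,\Lambda_\cc\hat h_{\cc},\bar\Omega)$, with $\bar\Omega^{-1}$ the $\bar\cc\bar\cc$ block of $\hat\Omega^{-1}=\hat V^n$.

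Next I would localize to the event $\hat A_{n,1}\cap\hat A_{n,2}$ of Assumptions~\ref{M1}--\ref{M2}, which has $P_0^n$--probability $1-o(1)$ — whence the final $O_{P_0^n}(\cdot)$ — and on which, by Lemma~\ref{lemma:eigen:restricted:matrix} applied to the principal blocks of $\hat\Omega$ and $\hat\Omega^{-1}$, the quantities $\|\ell^{(3)}_{\hat\theta}/n\|$, the operator norm of $\Lambda_\cc = \hat\Omega_{\bar\cc\cc}\hat\Omega_{\cc\cc}^{-1}$, and the eigenvalues of $\bar\Omega$ and $\hat\Omega_{\cc\cc}$ are all bounded by fixed constants. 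On this event, for $\hat h_{\cc}\in\hat K_{n,\cc}$, i.e.\ $\|\hat h_{\cc}\|<2M_n$, the explicit formulas \eqref{nu1nu2}--\eqref{def:param:ske:margin} yield $|\alpha_{\eta,\cc}(\hat h_{\cc})| \lesssim M_n^3/\sqrt{n}$, which is below $\epsilon_0$ for $n$ large, so the quadratic bound gives $|R(\alpha_{\eta,\cc}(\hat h_{\cc}))| \le C_F\,\alpha_{\eta,\cc}(\hat h_{\cc})^2 \lesssim M_n^6/n$, uniformly over $\hat K_{n,\cc}$. This controls the second summand above.

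For the first summand, $\E_{\hat h_{\bar \cc }| \hat h_{\cc}}|R(\hat{\alpha}_{\eta}(\hat h))|$, I would split the Gaussian integral at $\|\hat h_{\bar\cc}\| = A M_n$ for a fixed constant $A$ large enough that $AM_n$ exceeds twice $\|\Lambda_\cc\hat h_{\cc}\| \lesssim M_n$. On $\{\|\hat h_{\bar\cc}\|\le AM_n\}$ one has $\|\hat h\|\le (2+A)M_n$, hence $|\hat{\alpha}_{\eta}(\hat h)|\lesssim M_n^3/\sqrt n<\epsilon_0$ for $n$ large, and the quadratic bound gives $|R(\hat{\alpha}_{\eta}(\hat h))|\lesssim M_n^6/n$ uniformly, so this portion contributes $\lesssim M_n^6/n$. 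On $\{\|\hat h_{\bar\cc}\|>AM_n\}$ I would use the crude bound $|R(\hat{\alpha}_{\eta}(\hat h))|\lesssim 1+\|\hat h_{\bar\cc}\|^3/\sqrt n$ and integrate against $\phi_{d-d_\cc}(\cdot\,;\,\Lambda_\cc\hat h_{\cc},\bar\Omega)$; since the mean lies within $(A/2)M_n$ of the origin and the covariance eigenvalues are bounded, a Gaussian tail estimate combined with Cauchy--Schwarz against a fixed finite moment bounds this portion by $e^{-\tilde c M_n^2}$ times a polynomial in $M_n$, which is $O(n^{-D})$ for any $D$ once $c_0$ in $M_n=\sqrt{c_0\log n}$ is taken large. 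Collecting the three estimates and taking the supremum over $\hat h_{\cc}\in\hat K_{n,\cc}$ gives the stated rate with, for instance, $c_{10}=6$.

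The hard part is the tail step: because $F(x)=1/2+\eta x+O(x^2)$ is only a local expansion, the cubic argument $\hat{\alpha}_{\eta}(\hat h)$ must be kept $o(1)$, which fails on the Gaussian tail of $\hat h_{\bar\cc}$. Handling it cleanly requires uniform — over $\hat h_{\cc}\in\hat K_{n,\cc}$ — control of the data-dependent conditional mean $\Lambda_\cc\hat h_{\cc}$ and covariance $\bar\Omega$, so that the splitting radius $AM_n$ can be fixed independently of the data and the polynomial-times-Gaussian tail integral is genuinely $O(n^{-D})$; this is precisely where Assumptions~\ref{M1}--\ref{M2} and Lemma~\ref{lemma:eigen:restricted:matrix} enter.
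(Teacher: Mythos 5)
Your proof is correct and uses the same essential ingredients as the paper's: the first--order expansion of $F$ with a quadratic remainder, uniform smallness of the cubic argument on a bulk region for $\hat h_{\bar\cc}$ of radius of order $M_n$, Gaussian tail and eigenvalue control supplied by Assumption~\ref{M2}, Lemma~\ref{lemma:eigen:restricted:matrix} and the Schur--complement identity for $\bar\Omega$, all uniformly over $\hat h_{\cc}\in\hat K_{n,\cc}$. The genuine difference is the order of operations. You cancel the affine part $1/2+\eta\,\hat{\alpha}_{\eta}(\hat h)$ exactly through the conditional expectation before any truncation, so the target reduces to $\E_{\hat h_{\bar \cc }| \hat h_{\cc}}R(\hat{\alpha}_{\eta}(\hat h))-R(\alpha_{\eta,\cc}(\hat h_{\cc}))$ and only the remainder $R$ needs a bulk/tail split; the second term is immediately $O(M_n^6/n)$, the bulk part of the first likewise, and the tail is closed with the global bound $|R(x)|\le 1/2+|\eta||x|$, a sixth--moment bound and Cauchy--Schwarz, giving $O(n^{-D})$ for $c_0$ large. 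The paper instead truncates first, using $|F|\le 1$ on a tail event $\hat K_{n,\bar\cc}^c$ of conditional probability $O(1/n)$, and then expands; because the indicator breaks the exact cancellation of the linear terms, it must rewrite $\mathbbm{1}_{\hat h_{\bar\cc}\in\hat K_{n,\bar\cc}}=1-\mathbbm{1}_{\hat h_{\bar\cc}\in\hat K_{n,\bar\cc}^c}$ and bound the leftover cross term $\E_{\hat h_{\bar \cc }| \hat h_{\cc}}[\eta\{\E_{\hat h_{\bar \cc }| \hat h_{\cc}}(\hat{\alpha}_{\eta}(\hat h))-\hat{\alpha}_{\eta}(\hat h)\}\mathbbm{1}_{\hat h_{\bar\cc}\in\hat K_{n,\bar\cc}^c}]$ by Cauchy--Schwarz as $O(M_n^{c}/\sqrt{n})\cdot O(n^{-1/2})$. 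So your decomposition buys a cleaner argument with no cross--term step, at the cost of a slightly more careful tail estimate (linear growth of $R$ integrated against the Gaussian tail), whereas the paper's route only ever needs boundedness of $F$ off the bulk; both deliver the claimed $O_{P_0^n}(M_n^{c_{10}}/n)$ rate, and your explicit $c_{10}=6$ is consistent with the poly--log exponents the paper leaves implicit.
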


\begin{proof}
	Recall that the covariance matrix  $\bar \Omega$, associated to the Gaussian measure \smash{$P_{\hat h_{\bar \cc} |\hat h_{ \cc}} $}, is the Schur complement of the block $\hat \Omega_{\cc\cc}$ of the matrix $\hat \Omega$. This implies that, in view of Assumption \ref{M2}, Lemma \ref{lemma:eigen:restricted:matrix} and the properties of the Schur complement, there exist constants $0<\tilde c_1<\tilde c_2<\infty$ such that the eigenvalues of $\bar\Omega$ are bounded from below by $\tilde c_1$ and above by $\tilde c_2$ with probability tending to one.
	
	As a consequence, there exists a large enough constant $c_{0,\bar \cc}>0$ such that the complement of the set \smash{$\hat K_{n, \bar \cc } = \{ \hat h_{\bar \cc} \, : \, \|\hat h_{\bar \cc} - \E_{\hat h_{\bar \cc } | \hat h_{\cc} }(\hat h_{\bar \cc}) \| < 2 \sqrt{  c_{0,\bar \cc} \log n}   \},$} has negligible mass, i.e.,  the event \smash{$L_{n,0} =  \big \lbrace P_{\hat h_{\bar \cc} |\hat h_{ \cc  }}(\hat h_{ \bar\cc} \in \hat K_{n, \bar \cc }^c) < \tilde c_3/ n ,\, \text{for} \, \hat{h}_{ \cc} \in \hat K_{n,\cc} \big \rbrace$} has probability $P_0^n(L_{n,0}) = 1-o(1)$.
    This fact and the boundedness of $F(\cdot)$ imply 
    \begin{equation} \label{help:margin:ord0} 
    \begin{aligned}
    	 &\textstyle\sup_{\hat h_{\cc} \in \hat K_{n,\cc}} \big| \E_{\hat h_{\bar \cc }| \hat h_{\cc}}[ F(\hat{\alpha}_{\eta}(\hat h)) - F(\E_{\hat h_{\bar \cc } |\hat  h_{\cc} }\{\hat{\alpha}_{\eta}(\hat h)\}) \big|    \mathbbm{1}_{L_{n,0}} \\
    	 &\textstyle \leq \sup_{\hat h_{\cc} \in \hat K_{n,\cc}}  \big| \E_{\hat h_{\bar \cc }| \hat h_{\cc}} \big \lbrace F(\hat{\alpha}_{\eta}(\hat h)) - F(\E_{\hat h_{\bar \cc } |\hat  h_{\cc} } \hat{\alpha}_{\eta}(\hat h)) \big \rbrace  \mathbbm{1}_{ \hat h_{\bar \cc } \in \hat K_{n,\bar \cc} }  \big| \mathbbm{1}_{ L_{n,0}} + O(n^{-1}).
    \end{aligned}
    \end{equation}
    We further decompose the first term on the right--hand--side of the last display by adding and subtracting the~first~order Taylor expansion of $F(\hat{\alpha}_{\eta}(\hat h))$. Moreover, recall that under the assumptions of Theorem \ref{thm:marginal}, there exists $\tilde c_3>0$ such that  \smash{$L_{n,1} = L_{n,0} \cap \{ \max_{s,t,l} |a^{(3),n}_{\hat \theta, stl}| < \tilde c_3 \}$} holds with probability $P_0^n L_{n,1} = 1-o(1)$.
    Since  $\|\hat h \| \leq \|\hat h_{\cc}\| + \|\hat h_{\bar \cc}\| $ it follows also that, conditioned on $L_{n,1}$, both \smash{$\sup_{ \hat h \in  \hat K_{n,\cc} \cap \hat K_{n,\bar \cc} }|\hat{\alpha}_{\eta}(\hat h)|$} and \smash{$ \sup_{ \hat h_{\cc} \in  \hat K_{n,\cc}}|\E_{\hat h_{\bar \cc }| \hat h_{\cc}}\{ \hat{\alpha}_{\eta}(\hat h)\}|$} are of order $O(M_n^{\tilde c_4}/\sqrt{n})$, while $ \sup_{ \hat h_{\cc} \in  \hat K_{n,\cc}} \E_{\hat h_{\bar \cc }| \hat h_{\cc}}\{ \hat{\alpha}_{\eta}(\hat h)^2\} =O(M_n^{\tilde c_5}/n)$  for some $\tilde c_4,\tilde c_5 >0$. This, together with \smash{$F(x) = 1/2+\eta x + O(x^2), \, x \to 0$}, implies for $n$ sufficiently large
    \begin{equation*}
    	\begin{aligned}
    	&\textstyle \sup_{\hat h_{\cc} \in \hat K_{n,\cc}}	\big| \E_{\hat h_{\bar \cc }| \hat h_{\cc}} \big \lbrace F(\hat{\alpha}_{\eta}(\hat h)) -  1/2 - \eta \hat{\alpha}_{\eta}(\hat h)  \big \rbrace   \mathbbm{1}_{\ \hat h_{\bar \cc } \in \hat K_{n,\bar \cc} } \big| \mathbbm{1}_{L_{n,1}} \\
    	&\textstyle \lesssim \, \sup_{\hat h_{\cc} \in \hat K_{n,\cc}} \E_{\hat h_{\bar \cc }| \hat h_{\cc}}\{ \hat{\alpha}_{\eta}(\hat h)^2\} \mathbbm{1}_{L_{n,1}}   = O(M_n^{\tilde c_5}/n).
    	\end{aligned}
    \end{equation*}
    
    As a consequence, 
    \begin{equation} \label{help:margin:ord1}
 	\textstyle   \sup_{\hat h_{\cc} \in \hat K_{n,\cc}}\big|	\E_{\hat h_{\bar \cc }| \hat h_{\cc}} \big \lbrace F(\hat{\alpha}_{\eta}(\hat h)) -  1/2 - \eta \hat{\alpha}_{\eta}(\hat h)  \big \rbrace   \mathbbm{1}_{\hat h_{\bar \cc } \in \hat K_{n,\bar \cc} } \big| = O_{P_0^n}(M_n^{\tilde c_5}/n). 
    \end{equation}
    
    To conclude, note that for $n$ sufficiently large,  
     \begin{equation*}
 \scalemath{0.97}{	    	\begin{aligned}
    		&\textstyle \sup_{\hat h_{\cc} \in \hat K_{n,\cc}}  | \E_{\hat h_{\bar \cc }| \hat h_{\cc}} \big \lbrace  F(\E_{\hat h_{\bar \cc } |\hat  h_{\cc} } \hat{\alpha}_{\eta}(\hat h)) -  1/2 - \eta \hat{\alpha}_{\eta}(\hat h)  \big \rbrace   \mathbbm{1}_{ \hat h_{\bar \cc } \in \hat K_{n,\bar \cc} }   \mathbbm{1}_{ L_{n,1}} | \\
    		&\leq \textstyle \sup_{\hat h_{\cc} \in \hat K_{n,\cc}} | \E_{\hat h_{\bar \cc }| \hat h_{\cc}}[ \eta \big \lbrace  \E_{\hat h_{\bar \cc } |\hat  h_{\cc} } (\hat{\alpha}_{\eta}(\hat h) ) - \hat{\alpha}_{\eta}(\hat h)  \big \rbrace   \mathbbm{1}_{ \hat h_{\bar \cc } \in \hat K_{n,\bar \cc} }  ] \mathbbm{1}_{L_{n,1}} |\\
		&\qquad \qquad \qquad \qquad \qquad \qquad \qquad \qquad \qquad \qquad \qquad + O(\{\E_{\hat h_{\bar \cc }| \hat h_{\cc}} \hat{\alpha}_{\eta}(\hat h)\}^2  \mathbbm{1}_{L_{n,1}}  )\\
    		&\textstyle = \sup_{\hat h_{\cc} \in \hat K_{n,\cc}} | \E_{\hat h_{\bar \cc }| \hat h_{\cc}}[ \eta \big \lbrace  \E_{\hat h_{\bar \cc } |\hat  h_{\cc} } (\hat{\alpha}_{\eta}(\hat h))  - \hat{\alpha}_{\eta}(\hat h)  \big \rbrace   \mathbbm{1}_{ \hat h_{\bar \cc } \in \hat K_{n,\bar \cc}^c }  ] \mathbbm{1}_{L_{n,1}} | + O(M_n^{2 \tilde c_4}/n  )\\
    		&\textstyle \leq \sup_{\hat h_{\cc} \in \hat K_{n,\cc}} \eta [ \E_{\hat h_{\bar \cc }| \hat h_{\cc}}  \big \lbrace  \E_{\hat h_{\bar \cc } |\hat  h_{\cc} }( \hat{\alpha}_{\eta}(\hat h))  - \hat{\alpha}_{\eta}(\hat h)  \big \rbrace^2]^{\frac{1}{2}} [ \E_{\hat h_{\bar \cc }| \hat h_{\cc}}  \mathbbm{1}_{ \hat h_{\bar \cc } \in \hat K_{n,\bar \cc}^c } ]^{\frac{1}{2}}    \mathbbm{1}_{ L_{n,1}} + O(M_n^{2 \tilde c_4}/n ),\\
    	\end{aligned}}
    \end{equation*} 
    with the first inequality that follows from the Taylor expansion of $F(\cdot)$ at 0, the equality from $ \mathbbm{1}_{ \hat h_{\bar \cc } \in \hat K_{n,\bar \cc} }   = 1 -  \mathbbm{1}_{ \hat h_{\bar \cc } \in \hat K_{n,\bar \cc}^c } $ and the last line from the Cauchy--Schwarz inequality.
    Finally, note that \smash{$[\E_{\hat h_{\bar \cc }| \hat h_{\cc}}  \mathbbm{1}_{ \hat h_{\bar \cc } \in \hat K_{n,\bar \cc}^c } ]^{1/2}    \mathbbm{1}_{ L_{n,1}} = O(n^{-1/2})$} while  
    $$\textstyle \sup_{\hat h_{\cc} \in \hat K_{n,\cc}}  [ \E_{\hat h_{\bar \cc }| \hat h_{\cc}}  \big \lbrace  \E_{\hat h_{\bar \cc } |\hat  h_{\cc} }( \hat{\alpha}_{\eta}(\hat h))  - \hat{\alpha}_{\eta}(\hat h)  \big \rbrace^2]^{\frac{1}{2}} \mathbbm{1}_{ L_{n,1}}= O(M_n^{\tilde c_6}/\sqrt{n}), $$  for some $\tilde c_6>0$ large enough. From the previous display it follows
    \begin{equation}\label{help:margin:ord2}
    	\begin{aligned}
    		\textstyle \sup_{\hat h_{\cc} \in \hat K_{n,\cc}} \big|	\E_{\hat h_{\bar \cc }| \hat h_{\cc}}[ \big \lbrace  F(\E_{\hat h_{\bar \cc } |\hat  h_{\cc} } \hat{\alpha}_{\eta}(\hat h)) -  1/2 - \eta \hat{\alpha}_{\eta}(\hat h)  \big \rbrace   \mathbbm{1}_{ \hat h_{\bar \cc } \in \hat K_{n,\bar \cc} } \big|= O_{P_0^n}(M_n^{\tilde c_{7}}/n). 
    	\end{aligned}
    \end{equation}
    for a $\tilde c_{7}$ large enough. Combining \eqref{help:margin:ord0}, \eqref{help:margin:ord1} and \eqref{help:margin:ord2} concludes the proof.
\end{proof}

\section{Non--Asymptotic bounds and proof of Theorem \ref{thm:modal}}\label{sec:proof:finite:sample}
The proof of Theorem \ref{thm:modal} follows as a direct consequence of a non--asymptotic version of this result, which we derive in Theorem \ref{thm:modal:fs} below. More precisely, we show~that, for a large enough $n$, the \textsc{tv} distance between the skew--modal approximation and the target posterior is upper bounded by a constant times $M_n^6d^3/n$, on an event $\hat{A}_{n,4}$ with $P_0^n(\hat{A}_{n,4})=1-o(1)$. When $d$ is fixed and $n \to \infty$, this result implies the $O_{P^n_0}(M_n^{c_8}/n)$ rate stated in Theorem \ref{thm:modal}, with $c_8=6$. Similarly, we provide also a non--asymptotic upper bound for the approximation error of functionals of the posterior that implies the asymptotic rate stated in \eqref{result:corol:functionals}.

To prove the above result, let us introduce some additional notations. For $\delta>0$, define
\begin{align}
\scalemath{0.97}{	 L_{\pi, \delta} := \sup\nolimits_{\theta \, : \, \|\theta - \theta_*\| < \delta} | \log \pi(\theta_*) - \log \pi(\theta )|<\infty \ \ \text{and}\ \
C_{\pi,\delta} = \inf\nolimits_{\theta : \|\theta-\theta_{*}\|< \delta} \pi(\theta)>0,\label{def:L_pi}}
\end{align}
along with the event
	\begin{equation}\label{def:A_3}
		 \hat{A}_{n,3} =  \lbrace \sup\nolimits_{\|\theta - \hat \theta\|> 2 M_n\sqrt{d}/\sqrt{n}}\{ (\ell(\theta) - \ell(\hat \theta))/n  \} < - c_5 M_n^2 d/n +  L_{\pi, \delta}/n \rbrace,
	\end{equation}
	providing bound on the logarithm of the likelihood--ratio outside of the $ 2 M_n\sqrt{d}/\sqrt{n}$--radius ball centered around the \textsc{map}. Then, for convenience, let us denote the intersection of the events in Assumptions \ref{M1}--\ref{M2} and the one above by
	\begin{equation}\label{def:A_4}
		\hat{A}_{n,4} =   \hat{A}_{n,0} \cap \hat{A}_{n,1} \cap \hat{A}_{n,2} \cap \hat{A}_{n,3}.
	\end{equation}

Finally, recall that in the definition of the skew--symmetric approximation in \eqref{limiting:distribution}, we consider a univariate cdf $F(\cdot)$ satisfying $F(-x) = 1-F(x)$ and $F(0) =  1/2$, while admitting a suitable expansion. By making more explicit such an expansion, let us assume  that for~some $\eta \in \mathbbm{R}$ and a sufficiently small  $\delta>0$ it holds 
\begin{align}
F(x) -1/2 - \eta x = r_{F,\delta}(x), \qquad \text{with $| r_{F,\delta}(x)  |< L_{F,\delta} x^2$.} \label{assump:F}
\end{align}
 for $\{x \,: \, |x|< \delta\}$.
\begin{theorem} \label{thm:modal:fs}
Suppose that Assumptions \ref{cond:uni}, \ref{cond:3}--\ref{cond:4}--\ref{M1}--\ref{M2} hold and that there exists $\delta>0$ small enough such that \eqref{def:L_pi} and \eqref{assump:F} are satisfied. Then \smash{$P_0^n(\hat{A}_{n,4})=1-o(1)$}. Furthermore, let \smash{$\hat{h}= \sqrt{n}(\theta - \hat{\theta})$} and $M_n = \sqrt{c_0 \log n}$, with $c_0$ satisfying
		\begin{align}
	c_0\geq 2[c_1^*/d+ L_{\pi, \delta}/d+\log (\bar\eta_2/(2\pi))/2-\log (C_{\pi,\delta}/2)/d\big]/c_5\vee  2/\bar\eta_1, \label{cond_c0:2}
	\end{align}
	where $c_1^*$ is defined as
	\begin{equation} \label{def:star:c1}
		c_1^* = 4L_3/3 + 2L_4/3 +  2L_{\pi,2}.
	\end{equation}
	Then, conditioned on $\hat{A}_{n,4}$,  for  $n$ large enough satisfying
\begin{align}
	 \frac{ M_n^3d^{3/2}}{\sqrt{n}} \leq 1\wedge\frac{\delta}{2}\wedge \frac{3}{4L_3}\Big(\delta\wedge \frac{1}{2\sqrt{L_{F,\delta}}}\wedge \frac{1}{4} \Big)\wedge \frac{1}{2c_2^*},  \label{assump:n:large}
\end{align}
with 
\begin{equation}\label{def:c2*}
	c^*_2 =16(2+L_{F,\delta}) L_3^2  /9+ 2 \cdot 16^2L_{F,\delta}^2 L_3^4 /9^2+2 L_4/3+ 2L_{\pi,2},
\end{equation}
and $L_{\pi,\delta}, C_{\pi,\delta}>0$ given in \eqref{def:L_pi}, we have that
	\begin{eqnarray} \label{tot:variatindist:map:fs}
		\| \Pi_n( \cdot ) - \hat{P}^n_{\textsc{sks}}(\cdot)  \|_{\textsc{tv}} \leq (M_n^6 d^3/n) \hat{r}_{\textsc{s-tv}}(n,d), 
	\end{eqnarray}
where $\hat{P}_{\textsc{sks}}^n(S) \, = \, \int_{S} \hat{p}_{\textsc{sks}}^n(\hat{h}) d\hat{h}$ for $S \subset \mathbbm{R}^{d}$ with $\hat{p}_{\textsc{sks}}^n(\hat{h})$ denoting the skew--modal approximating density defined in~\eqref{limiting:distribution:map}, while 
	\begin{equation}\label{def:remainders}
		\begin{split}
			&\hat{r}_{\textsc{s-tv}}(n,d) =   2c_2^*+ 2(c_2^*)^2e M_n^6d^3/n+4n^{1-\bar{\eta}_1 c_0}	
			+2n^{1-(c_0 c_5/2)d}.
		\end{split}
	\end{equation}

	In addition, let \smash{$G \, : \, \mathbbm{R}^d \to \mathbbm{R} $} be a function satisfying \smash{$ | G(\hat h) | \leq C_G \| \hat h\|^r $}, for some constant $C_G>0$. Then for $n$ satisfying also $n\geq 2^{(r+1)/(2\bar{\eta}_1 c_0)}$,  it holds, conditioned on $\hat{A}_{n,4}$, that
	\begin{equation} \label{result:corol:functionals:fs}
		\begin{aligned}
		\int  G(\hat h) | \pi_n(\hat h) - \hat{p}_{\textsc{sks}}^n(\hat h) | d\hat h /C_G
		\leq  \frac{2^r M_n^{6+r}d^{3+r/2}}{n} \hat{r}_{\textsc{s-tv}}(n,d)+ \frac{\E_{\pi} \|\hat{h}\|^r}{n^{c_0 c_5d/2}}
		+  \frac{2^{2r+2} (M_n \sqrt{d})^r}{n^{ 2 \bar{\eta}_1 c_0 }} 	 .
		\end{aligned} 
	\end{equation}  
\end{theorem}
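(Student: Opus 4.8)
The plan is to refine the proof of Theorem~\ref{thm:1}, now tracking the dependence on $d$ and on every constant, with $\theta_*$ replaced by the \textsc{map} $\hat\theta$. First I would establish $P_0^n(\hat A_{n,4})=1-o(1)$: the factors $\hat A_{n,0},\hat A_{n,1},\hat A_{n,2}$ have probability $1-o(1)$ directly from Assumptions~\ref{M1}--\ref{M2}, so only $\hat A_{n,3}$ needs work. On $\hat A_{n,0}$ the shell $\{\|\theta-\hat\theta\|>2M_n\sqrt d/\sqrt n\}$ is contained in $\{\|\theta-\theta_*\|>M_n\sqrt d/\sqrt n\}$ for large $n$, so combining Assumption~\ref{cond:4} along the sequence $M_n\sqrt d$ with $\ell(\hat\theta)\ge\ell(\theta_*)$ and the bound $L_{\pi,\delta}<\infty$ from \eqref{def:L_pi} yields \eqref{def:A_3} with probability $1-o(1)$. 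Next I would run the triangle‑inequality decomposition of \eqref{triangle:tv} with $K_n=\{\hat h:\|\hat h\|<2M_n\sqrt d\}$, splitting $\|\Pi_n-\hat P^n_{\textsc{sks}}\|_{\textsc{tv}}$ into the posterior mass outside $K_n$, the skew‑modal mass outside $K_n$, and the restricted‑densities term. For the posterior tail I would bound the numerator by the likelihood‑ratio bound on $\hat A_{n,3}$ and the denominator by a Laplace‑type lower bound $\int_{K_n}e^{\ell(\hat\theta+\hat h/\sqrt n)-\ell(\hat\theta)}\pi(\hat\theta+\hat h/\sqrt n)\,d\hat h\gtrsim C_{\pi,\delta}(2\pi/\bar\eta_2)^{d/2}$, valid on $\hat A_{n,1}\cap\hat A_{n,2}$ once \eqref{assump:n:large} forces the cubic and quartic corrections to be at most $1$ there; the threshold \eqref{cond_c0:2} on $c_0$ makes this term $\lesssim n^{1-(c_0c_5/2)d}$. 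For the skew‑modal tail, $|F|\le 1$ reduces it to $2\int_{K_n^c}\phi_d(\hat h;0,\hat\Omega)\,d\hat h$, which on $\hat A_{n,1}$ and with $c_0\ge 2/\bar\eta_1$ is $\lesssim n^{1-\bar\eta_1 c_0}$; these two contributions reproduce the last two summands of $\hat r_{\textsc{s-tv}}$ in \eqref{def:remainders}.

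The core of the argument is the restricted‑densities term. Using $\hat p_{\textsc{sks}}^{n,K_n}(\hat h)/\hat p_{\textsc{sks}}^{n,K_n}(\hat h')=\hat p_{\textsc{sks}}^{n}(\hat h)/\hat p_{\textsc{sks}}^{n}(\hat h')$ and Jensen's inequality as in \eqref{equ_13}--\eqref{upper:cond:Kn}, I would bound it by $2|r_n|+2e^{2\beta|r_n|}|r_n|^2$, where $r_n(\hat h)$ is the difference, up to an additive constant, between $\log\big[(p_{\hat\theta+\hat h/\sqrt n}/p_{\hat\theta})(X^n)\,\pi(\hat\theta+\hat h/\sqrt n)/\pi(\hat\theta)\big]$ and $\log\big[2\phi_d(\hat h;0,\hat\Omega)F(\hat\alpha_\eta(\hat h))\big]$, and $|r_n|=\sup_{\hat h\in K_n}|r_n(\hat h)|$. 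This is the non‑asymptotic analogue of Lemma~\ref{lemma:1}: since $\ell^{(1)}_{\hat\theta}+\log\pi^{(1)}_{\hat\theta}=0$ and $\hat\Omega^{-1}=J_{\hat\theta}/n$, the linear terms cancel and the quadratic terms match the Gaussian exponent exactly; a fourth‑order Taylor remainder of the log‑likelihood (controlled by $L_4$ on $\hat A_{n,2}$) costs $O(L_4 M_n^4 d^2/n)$, the second‑order prior term (controlled by $L_{\pi,2}$) costs $O(L_{\pi,2}M_n^2 d/n)$, and matching the cubic term through $2F(x)=1+2\eta x+r_{F,\delta}(x)$ and $\log(1+y)=y+O(y^2)$ leaves a remainder $O\big((2+L_{F,\delta})L_3^2 M_n^6 d^3/n\big)$, using $|\hat\alpha_\eta(\hat h)|\le \tfrac{2L_3}{3|\eta|}M_n^3 d^{3/2}/\sqrt n$ on $K_n\cap\hat A_{n,2}$. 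Collecting these terms yields $|r_n|\le c_2^* M_n^6 d^3/n$ with $c_2^*$ as in \eqref{def:c2*}, and \eqref{assump:n:large} guarantees $|r_n|\le 1/2$, producing the first two summands of $\hat r_{\textsc{s-tv}}$ and hence \eqref{tot:variatindist:map:fs}.

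For the functional bound I would split $\int\|\hat h\|^r|\pi_n-\hat p^n_{\textsc{sks}}|$ over $K_n$ and $K_n^c$: on $K_n$ bound $\|\hat h\|^r\le(2M_n\sqrt d)^r$ and invoke \eqref{tot:variatindist:map:fs}, which gives the $2^r M_n^{6+r}d^{3+r/2}n^{-1}\hat r_{\textsc{s-tv}}$ term; on $K_n^c$ bound $\int_{K_n^c}\|\hat h\|^r\pi_n$ by the Laplace lower bound together with the prior‑moment hypothesis $\int\|\hat h\|^r\pi(\hat\theta+\hat h/\sqrt n)\,d\hat h<\infty$, giving $\E_\pi\|\hat h\|^r n^{-c_0c_5d/2}$, and bound $\int_{K_n^c}\|\hat h\|^r\hat p^n_{\textsc{sks}}\le 2\int_{K_n^c}\|\hat h\|^r\phi_d(\hat h;0,\hat\Omega)$ by a polynomial‑times‑Gaussian tail estimate, the condition $n\ge 2^{(r+1)/(2\bar\eta_1 c_0)}$ absorbing the polynomial into the exponential decay; this yields \eqref{result:corol:functionals:fs}.

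I expect the main obstacle to be the simultaneous bookkeeping in the two places where constants and dimension powers are generated: the Laplace‑type lower bound on the normaliser, where one must verify that on $\hat A_{n,4}$ and under \eqref{assump:n:large} all cubic and quartic corrections to the quadratic exponent are uniformly negligible on $K_n$ (this is exactly where $c_1^*$ in \eqref{def:star:c1} and the threshold \eqref{cond_c0:2} on $c_0$ enter), and the Lemma~\ref{lemma:1}‑type expansion of $r_n$, where the $M_n^6 d^3$ power arises from squaring the cubic skewing argument and one must check that the resulting constant does not exceed the $c_2^*$ of \eqref{def:c2*}. The remaining estimates — Gaussian tail integrals, Gaussian polynomial moments, and the restriction/normalisation manipulations — are routine.
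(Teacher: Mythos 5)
Your proposal is correct and follows essentially the same route as the paper's proof: the same triangle--inequality split over $\hat K_n=\{\hat h:\|\hat h\|\le 2M_n\sqrt d\}$, the same Laplace--type lower bound for the normalizing integral producing $c_1^*$ and the threshold \eqref{cond_c0:2}, the same \textsc{map}--centered analogue of Lemma~\ref{lemma:1} (Lemma~\ref{lemma:modal:taylor}) yielding $|\hat r_{n,4}|\le c_2^*M_n^6d^3/n$, and the same tail and truncated--moment estimates for \eqref{result:corol:functionals:fs}. The only phrase to fix is ``$\ell(\hat\theta)\ge\ell(\theta_*)$'', which need not hold since $\hat\theta$ is the \textsc{map} rather than the maximum likelihood estimator; the argument instead uses $\ell(\theta_*)-\ell(\hat\theta)\le\log\pi(\hat\theta)-\log\pi(\theta_*)\le L_{\pi,\delta}$ on $\hat A_{n,0}$, which is exactly why the $L_{\pi,\delta}/n$ slack appears in \eqref{def:A_3}, so your sketch of $P_0^n(\hat A_{n,4})=1-o(1)$ goes through with this one-line substitution.
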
 

\begin{remark}
Let us discuss briefly the conditions and results of the previous theorem. The condition \eqref{assump:n:large} on $n$ is needed in order to provide a finite sample control for the total variation distance. Under such a condition, for $n$ large enough, all the summands within the expression for $\hat{r}_{\textsc{s-tv}}(n,d)$ in \eqref{def:remainders}, except $2c_2^*$, can be always bounded by an arbitrarily small constant $\bar{c}$ not depending on $d$ and $n$. As such, the term $\hat{r}_{\textsc{s-tv}}(n,d)$ in \eqref{tot:variatindist:map:fs} can be itself upper bounded by $C=2c_2^*+\bar{c}$, thereby yielding the constant in Remark~\ref{rem_31} within the article. In addition, investigating the proof reveals that $c^*_2 $ could be replaced in the limit by $16L_3^2(2+L_{F,\delta})/9$. Similarly, $c_1^*$ can be replaced asymptotically by $4L_3/3$. Then the derived upper bound $M_n^6d^3/n$  on the total variation distance in fact extends our results to the high dimensional setting in which $d$ grows with $n$ at a rate that is, up to a poly–log term, slower than $n^{1/3}$. Leveraging~this discussion, it can also be noticed that the upper bound for the functional in \eqref{result:corol:functionals:fs} is asymptotically equal to $2^{r+1} c^*_2 M_n^{6+r}d^{3+r/2}/n.$ Finally, as already discussed, by fixing $d$ and letting $n \to \infty$, Theorem~\ref{thm:modal:fs} yields directly to Theorem \ref{thm:modal}.
\end{remark}

%%%%%%%%%%%%%%%%%%%%%%%%%%%%%%%%%%%%%%%%%%%%%%%%%%%%%%%%%%%%%%%%%%%%%%%%%%%%%%%%
\begin{proof}[proof of Theorem \ref{thm:modal:fs}]
First note that, in view of Assumption \ref{cond:4},
\begin{align*}
 \bar{A}_{n,0} := \lbrace \sup\nolimits_{\|\theta - \theta_*\|> M_n\sqrt{d}/\sqrt{n}}\{ (\ell(\theta) - \ell(\hat \theta))/n  \} < - c_5 dM_n^2/n - (\ell(\hat \theta) - \ell( \theta_*)) /n \rbrace. \end{align*}
hold with $P_{0}^n$-probability $P_0^n(\bar{A}_{n,0})\geq 1-\hat{\epsilon}_{n,3}$, where $\hat{\epsilon}_{n,3}=o(1)$. Note also that under  $\hat{A}_{n,0}$, in view of $\hat\theta$ maximizing the posterior, for any $\delta>M_n\sqrt{d}/\sqrt{n}$, we have
	\begin{align*}
	- [\ell(\hat \theta) - \ell( \theta_*)] /n=  -[\log \pi_n(\hat\theta)-\log \pi_n(\theta_*) ]/n+[\log \pi(\hat\theta)-\log \pi(\theta_*)]/n\leq  L_{\pi, \delta}/n.  
	\end{align*}
Therefore, $P_0^{n}(\hat{A}_{n,0}\cap \hat{A}_{n,3})\geq P_0^{n}(\hat{A}_{n,0}\cap \bar{A}_{n,0})\geq 1-\hat\epsilon_{n,0}-\hat\epsilon_{n,3}$. Hence by the union bound
	\begin{align*}
P_0^{n}(\hat{A}_{n,4})\geq 1-\sum\nolimits_{i=0}^3\hat{\epsilon}_{n,i}=1-o(1).
\end{align*}

To finish the proof we follow a similar lines of reasoning as in Theorem \ref{thm:1} and Corollary \ref{corol:1} with the main additional difference of restricting ourselves to the event  $\hat{A}_{n,4}$ and tracking the constant terms and dimension dependence explicitly in the rest of the proof.

Let us first split  the problem into three parts using triangle inequality 
	\begin{equation} \label{triangle:tv:map:fs}
		\scalemath{0.97}{	\begin{aligned}
				&\int| \pi_n(\hat h) - \hat{p}_{\textsc{sks}}^n(\hat h)  | d\hat h \\
				&\leq 	{\int}| \pi_n(\hat h) -  \pi_n^{\hat K_n}(\hat h) | d\hat h + {\int}| \pi_n^{\hat K_n}( \hat h) - \hat{p}_{\textsc{sks}}^{n, \hat K_n}( \hat h) |d\hat h \  {+}  {\int}| \hat{p}_{\textsc{sks}}^n(\hat h) - \hat{p}_{\textsc{sks}}^{n, \hat K_n}( \hat h) | d\hat h,  
		\end{aligned}}
	\end{equation}
	where \smash{$\pi_n^{\hat K_n}( \hat h)= \pi_{n}(\hat h)\mathbbm{1}_{\hat h \in \hat K_n}/\Pi_n(\hat K_n)$} and 	\smash{$\hat{p}_{\textsc{sks}}^{n, \hat K_n}( \hat h)=\hat{p}_{\textsc{sks}}^n(\hat h) \mathbbm{1}_{\hat h \in \hat K_n}/\hat P_{\textsc{sks}}^n(\hat K_n)$}	are the~versions of $\pi_n(\hat h)$ and $\hat{p}_{\textsc{sks}}^n(\hat h)$ restricted to 
	\begin{equation}
	\hat K_n=\{\hat{h}:\, \|\hat{h}\|\leq 2\sqrt{d}M_n \}.\label{def:hat_Kn}
	\end{equation}
	A standard inequality of the \textsc{tv} norm together with Lemma \ref{lemma:post:contr:map} implies that, on $\hat{A}_{n,4}$, for a large enough choice of $c_0$, satisfying \eqref{cond_c0:2},
	\begin{equation}\label{bound:out:pi:map:fs}
		\int| \pi_n(\hat h) -  \pi_n^{\hat K_n}(\hat h) | d\hat h \leq 2 \Pi_n(\hat K_n^c) \leq 2n^{-(c_0 c_5/2)d}.
	\end{equation}
	We deal now with the third term in a similar manner. The same \textsc{tv} inequality as above,~together with the invariance of \textsc{sks} with respect to even functions  (leveraged in the proof of Lemma~\ref{lemma:distr:inv}), and Lemma \ref{lemma:gauss:contr:map} give 
	\begin{equation*} 
	\begin{split}
		\int| \hat{p}_{\textsc{sks}}^n(\hat h) - \hat{p}_{\textsc{sks}}^{n, \hat K_n}( \hat h)| d\hat h 
		&\leq 2 \int_{ \hat h \,:\, \|\hat h\|>2 M_n\sqrt{d} }\hat{p}_{\textsc{sks}}^n(\hat h) d\hat h \\
		&= 2 \int_{ \hat h \,:\, \|\hat h\|>2 M_n\sqrt{d} } \phi_d(\hat h;0,\hat \Omega) d\hat h
		\leq 4 n^{-\bar{\eta}_1 c_0}.
	\end{split}
	\end{equation*}

	We are left to deal with the second term on the right side of \eqref{triangle:tv:map:fs}. To this end, define
\begin{align}
\hat r_{n,4}( \hat h) :=&\log \Big[ \frac{p_{ \hat \theta + \hat h/\sqrt{n} }^n}{p_{\hat \theta}^n}(X^n)\frac{ \pi(\hat \theta +  \hat h/\sqrt{n} )}{\pi(\hat \theta)} \Big] + \hat \omega^{-1}_{st} \hat h_s \hat h_t/2 - \log \big(2  \hat{w}( \hat h)\big),\label{def:hat_rn4}	
\end{align}
	where	$ \hat \Omega^{-1} = \hat V^n$ with $V^n=[\hat{v}^n_{st}]=[j_{\hat \theta, st}/n]$, while  the skewing function $\hat{w}( \hat h)$ is defined as  \smash{$\hat{w}( \hat h) =F(\hat{\alpha}_{\eta}(\hat h))$, with  $\hat{\alpha}_{\eta}(\hat h) = \{1/(12 \eta \sqrt{n})\}( \ell_{\hat \theta, stl}^{(3)}/n) \hat h_s \hat h_t  \hat h_l$}. Furthermore, note that, conditioned on $\hat{A}_{n,4}$, it follows from Lemmas \ref{lemma:gauss:contr:map} and \ref{lemma:post:contr:map} that $\Pi_n(\hat K_n) > 0 $  and $\hat{P}^n_{\textsc{sks}}(\hat K_n) > 0 $.  Hence, similarly as in the proof of Theorem \ref{thm:1}, using Lemma \ref{lemma:modal:taylor} and $|1-e^x| \leq  |x| + e^{ x}x^2/2$
	we obtain for $n$ large enough satisfying $M_n^6d^3/n\leq1\wedge1/(2 c_2^*)$ that
	\begin{equation*} 
		\scalemath{1}{\begin{split}
				{\int}| \pi_n^{\hat K_n}( \hat h) - \hat{p}_{\textsc{sks}}^{n, \hat K_n}( \hat h) |d\hat h   &\leq  \int_{\hat K_n \times \hat K_n}| 1 - \exp[ \hat r_{n,4}( \hat h') -  \hat r_{n,4}( \hat h)]| \pi_n^{ \hat K_n}( \hat h)\hat{p}_{\textsc{sks}}^{n, \hat K_n}( \hat h') d \hat h d \hat h'  \\
				&  \leq 2c_2^*( M_n^6 d^3/n) + 2 (c_2^*)^2( M_n^{12} d^6 /n^2) \exp(2 c_2^* M_n^6 d^3 /n)\\
				&\leq (M_n^6 d^3/n) [2c_2^*+ 2(c_2^*)^2e( M_n^6 d^3/n)].
		\end{split}}
	\end{equation*} 
Let us now combine the above results to obtain \eqref{tot:variatindist:map:fs}.

In particular, as a direct consequence of the above derivations, an upper bound for the \textsc{tv} distance among $\Pi_n( \cdot )$ and  $\hat{P}^n_{\textsc{sks}}(\cdot)  $ is 
\vspace{-7pt}

\begin{equation*}
\begin{split}
&\smash{2n^{-(c_0 c_5/2)d}+4 n^{-\bar{\eta}_1 c_0}+(M_n^6 d^3/n) [2c_2^*+ 2(c_2^*)^2e( M_n^6 d^3/n)]}\\
&\smash{=(M_n^6 d^3/n)[2c_2^*+ 2(c_2^*)^2e( M_n^6 d^3/n)+2n^{1-(c_0 c_5/2)d}/(M_n^6 d^3)+4 n^{1-\bar{\eta}_1 c_0}/(M_n^6 d^3)]}\\
&\smash{ \leq (M_n^6 d^3/n)[2c_2^*+ 2(c_2^*)^2e( M_n^6 d^3/n)+2n^{1-(c_0 c_5/2)d}+4 n^{1-\bar{\eta}_1 c_0}]=(M_n^6 d^3/n)\hat{r}_{\textsc{s-tv}}(n,d)},
\end{split}
\end{equation*}

\vspace{3pt}
\noindent as in \eqref{tot:variatindist:map:fs}, with $\hat{r}_{\textsc{s-tv}}(n,d)$ defined in \eqref{def:remainders}.

	It remains to deal with the upper bound in Equation \eqref{result:corol:functionals:fs}. Similarly to Corollary~\ref{corol:1}, it is sufficient to prove the statement for \smash{$\|\hat h \|^r$}. Using the  triangle inequality we can again split the problem~into~three parts
	\begin{equation}\label{a.13.mom:fs}
		\scalemath{0.99}{		\begin{aligned}
				& \int  \|\hat h \|^r | \pi_n(\hat h) - \hat{p}_{\textsc{sks}}^n(\hat{h})  |  d\hat h \\
				&  \qquad \leq \int_{\hat K_n^c}  \|\hat h \|^r \pi_n(\hat h) d\hat h + \int_{\hat K_n^c}  \|\hat h \|^r \hat{p}_{\textsc{sks}}^n(\hat{h}) d\hat h  +\int_{\hat K_n} \|\hat h \|^r | \pi_n(\hat h) - \hat{p}_{\textsc{sks}}^n(\hat{h}) | d\hat h. 
		\end{aligned}}
	\end{equation} 
	For the first term above note that as in Lemma \ref{lemma:post:contr:map}, we have on the event $\hat{A}_{n,4}$ that
	\begin{equation} \label{lemma:functionals:help1:fs}
		\begin{split}
			\int_{\hat K_n^c}   \|\hat h\|^r \pi_n(\hat h) d\hat h & \leq  \int_{\hat K_n^c}   \|\hat h\|^r  \frac{ e^{\ell(\hat \theta + \hat h/\sqrt{n})-\ell(\hat \theta)} \pi(\hat \theta + \hat h/\sqrt{n}) }{\int_{\hat K_n} e^{\ell(\hat \theta + \hat h'/\sqrt{n})-\ell(\hat \theta)} \pi(\hat \theta + \hat h'/\sqrt{n})d\hat h'}d\hat h \\
			&\leq  \E_{\pi} \|\hat{h}\|^r n^{-(c_0 c_5/2)d}.			
		\end{split}
	\end{equation}
Similarly, conditioned on $\hat A_{n,4}$, the boundedness of $\hat{w}(\cdot)$ and Lemma \ref{lemma:gauss:contr:map} imply
	\begin{equation*} 
		\begin{split}
			&\int_{\hat K_n^c} \|\hat h\|^r \hat{p}_{\textsc{sks}}^n(\hat{h}) d\hat h  \leq   2 \int_{\hat K_n^c}   \|\hat h\|^r \phi_d(\hat{h}; 0, \hat \Omega) d\hat h \\
			&\qquad \leq  2(2M_n \sqrt{d})^r \sum\nolimits_{i=2}^{\infty} i^r P_{\hat{\Omega}}\big(2(i-1) M_n\sqrt{d}<\|\hat{h}\|\leq 2i M_n\sqrt{d} \big)\\
			&\qquad \leq   2(2M_n \sqrt{d})^r \sum\nolimits_{i=2}^{\infty} i^r e^{ -2 \bar{\eta}_1 (i-1)^2 M_n^2}
			\leq   2(4M_n \sqrt{d})^r\sum\nolimits_{i=1}^{\infty} i^r n^{ -2 \bar{\eta}_1 c_0  i}. 
		\end{split}
	\end{equation*}
	Let us introduce the notation $a_i=i^r e^{ -2 \bar{\eta}_1 c_0 i \log (n) } $. Then by noting that $a_{i+1}/a_i\leq 2^r n^{-2 \bar{\eta}_1 c_0 }\leq 1/2$ for $n\geq 2^{(r+1)/(2\bar{\eta}_1 c_0)}$, the preceding display can be further bounded by 
		\begin{equation}\label{lemma:functionals:help2:fs}
		\begin{aligned}
		\int_{\hat K_n^c}   \|\hat h\|^r \hat{p}_{\textsc{sks}}^n(\hat{h}) d\hat h\leq  2(4M_n \sqrt{d})^r  \sum\nolimits_{i=1}^{\infty} \frac{i^r}{n^{2 c_0 \bar{\eta}_1 i }} \leq   2^{2r+2} (M_n \sqrt{d})^r n^{ -2 \bar{\eta}_1 c_0  }. 	 
		\end{aligned}
	\end{equation}
	
	Finally, Equation \eqref{tot:variatindist:map:fs} implies
	\begin{equation}\label{lemma:functionals:help3:fs}
		\scalemath{0.94}{	\int_{\hat K_n} \|\hat h \|^r \big | \pi_n(\hat h) - \hat{p}_{\textsc{sks}}^n(\hat{h}) \big | d\hat h \leq  (2M_n\sqrt{d})^r	\int| \pi_n(\hat h) - \hat{p}_{\textsc{sks}}^n(\hat{h}) | d\hat h \leq  \frac{2^r M_n^{6+r}d^{3+r/2}}{n} \hat{r}_{\textsc{s-tv}}(n,d)}.
	\end{equation}
	Combining \eqref{a.13.mom:fs}, \eqref{lemma:functionals:help1:fs}, \eqref{lemma:functionals:help2:fs} and \eqref{lemma:functionals:help3:fs} provides  \eqref{result:corol:functionals:fs}.
\end{proof}

\vspace{5pt}
\begin{lemma} \label{lemma:modal:taylor}
	Suppose that conditions  \ref{M2}, \eqref{assump:F} and \eqref{assump:n:large} hold. Then on the event $\hat{A}_{n,2}$, we have that
	\begin{equation}\label{def:r_n4}
		\begin{aligned}
			\hat r_{n,4}:=\sup\nolimits_{\hat h \in \hat{K}_n}|\hat r_{n,4}( \hat h)|\leq c^*_2\frac{d^3 M_n^6}{n},
		\end{aligned}
	\end{equation}
where $c^*_2$ is given in \eqref{def:c2*}, while $\hat{K}_n$ and \smash{$\hat r_{n,4}( \hat h)$} are defined as in \eqref{def:hat_Kn} and  \eqref{def:hat_rn4}, respectively.	
\end{lemma}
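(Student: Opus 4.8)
The plan is to expand the log of the likelihood--and--prior ratio appearing in the definition \eqref{def:hat_rn4} of $\hat r_{n,4}(\hat h)$ around the MAP $\hat\theta$, to observe the two exact cancellations that are built into the skew--modal construction, and then to bound the residual remainders uniformly over $\hat K_n$ by a constant multiple of $d^3 M_n^6/n$.

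First I would record the Taylor expansions that are licensed on $\hat A_{n,2}$. By \eqref{assump:n:large} we have $2\sqrt d M_n/\sqrt n\le\delta$, so for every $\hat h\in\hat K_n$ the segment joining $\hat\theta$ and $\hat\theta+\hat h/\sqrt n$ lies in $B_\delta(\hat\theta)$; hence on $\hat A_{n,2}$ a third--order Taylor expansion of $\ell(\hat\theta+\hat h/\sqrt n)-\ell(\hat\theta)$ with fourth--order Lagrange remainder is valid, the remainder being at most $\tfrac1{24}L_4\|\hat h\|^4/n\le\tfrac23 L_4 d^2 M_n^4/n$, and a second--order Taylor expansion of $\log\pi(\hat\theta+\hat h/\sqrt n)-\log\pi(\hat\theta)$ holds with remainder at most $\tfrac12 L_{\pi,2}\|\hat h\|^2/n\le 2 L_{\pi,2} d M_n^2/n$. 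Since $\hat\theta$ maximizes $\ell+\log\pi$ and is an interior point, the combined first--order term $(\ell^{(1)}_{\hat\theta,s}+\log\pi^{(1)}_{\hat\theta,s})\hat h_s/\sqrt n$ vanishes, so on $\hat K_n\cap\hat A_{n,2}$
\[
\log\Big[\frac{p^n_{\hat\theta+\hat h/\sqrt n}}{p^n_{\hat\theta}}(X^n)\frac{\pi(\hat\theta+\hat h/\sqrt n)}{\pi(\hat\theta)}\Big]=-\frac12\frac{j_{\hat\theta,st}}{n}\hat h_s\hat h_t+\frac{1}{6\sqrt n}\frac{\ell^{(3)}_{\hat\theta,stl}}{n}\hat h_s\hat h_t\hat h_l+\rho_n(\hat h),
\]
with $|\rho_n(\hat h)|\le \tfrac23 L_4 d^2 M_n^4/n+2 L_{\pi,2} d M_n^2/n$.

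The next step is to exploit the two cancellations. Because $\hat\omega^{-1}_{st}=\hat v^n_{st}=j_{\hat\theta,st}/n$, adding $\tfrac12\hat\omega^{-1}_{st}\hat h_s\hat h_t$ cancels the quadratic term identically. For the cubic term, observe that $2\eta\,\hat\alpha_\eta(\hat h)=\tfrac{1}{6\sqrt n}(\ell^{(3)}_{\hat\theta,stl}/n)\hat h_s\hat h_t\hat h_l$ exactly; writing $2F(\hat\alpha_\eta(\hat h))=1+2\eta\,\hat\alpha_\eta(\hat h)+2 r_{F,\delta}(\hat\alpha_\eta(\hat h))$ via \eqref{assump:F} (which is applicable since $|\hat\alpha_\eta(\hat h)|<\delta$ on $\hat K_n$ by \eqref{assump:n:large}) and then using $|\log(1+x)-x|\le x^2$ for $|x|\le\tfrac12$ with $x=2\eta\,\hat\alpha_\eta(\hat h)+2 r_{F,\delta}(\hat\alpha_\eta(\hat h))$ (whose smallness on $\hat K_n$ is again guaranteed by \eqref{assump:n:large}) gives $\log\big(2F(\hat\alpha_\eta(\hat h))\big)=\tfrac{1}{6\sqrt n}(\ell^{(3)}_{\hat\theta,stl}/n)\hat h_s\hat h_t\hat h_l+\tilde\rho_n(\hat h)$, where $\tilde\rho_n(\hat h)$ collects $2 r_{F,\delta}(\hat\alpha_\eta(\hat h))$ and the quadratic remainder of $\log(1+x)$. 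Substituting both identities into \eqref{def:hat_rn4}, the quadratic and the cubic terms cancel and one is left with $\hat r_{n,4}(\hat h)=\rho_n(\hat h)-\tilde\rho_n(\hat h)$.

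It then remains to bound $\tilde\rho_n$. The driving term has the size of $(2\eta\,\hat\alpha_\eta(\hat h))^2$: on $\hat A_{n,2}$, $|2\eta\,\hat\alpha_\eta(\hat h)|\le\tfrac{L_3}{6\sqrt n}(2\sqrt d M_n)^3=\tfrac43 L_3 d^{3/2}M_n^3/\sqrt n$, so $(2\eta\,\hat\alpha_\eta(\hat h))^2\le\tfrac{16}{9}L_3^2 d^3 M_n^6/n$, and $|r_{F,\delta}(\hat\alpha_\eta(\hat h))|\le L_{F,\delta}\hat\alpha_\eta(\hat h)^2$ is of the same order; combining these via $(a+b)^2\le 2a^2+2b^2$ shows $|\tilde\rho_n(\hat h)|$ is of order $d^3 M_n^6/n$, with a constant of the form $16(2+L_{F,\delta})L_3^2/9+2\cdot16^2L_{F,\delta}^2 L_3^4/9^2$. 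Since the two summands of $|\rho_n(\hat h)|$, namely $\tfrac23 L_4 d^2 M_n^4/n$ and $2 L_{\pi,2}d M_n^2/n$, are each dominated by $d^3 M_n^6/n$ once $d M_n^2\ge1$, taking the supremum over $\hat h\in\hat K_n$ yields \eqref{def:r_n4} with $c_2^*$ as in \eqref{def:c2*}. The main obstacle I expect is the bookkeeping of constants: one must check that condition \eqref{assump:n:large} is exactly strong enough to keep every argument within its domain of validity — the ball $B_\delta(\hat\theta)$, the interval $\{|x|\le\tfrac12\}$ for the $\log(1+x)$ estimate, and $\{|\hat\alpha_\eta(\hat h)|<\delta\}$ for \eqref{assump:F} — and that the collected constants sum to precisely the stated $c_2^*$; the logical structure is otherwise routine once the quadratic and cubic cancellations have been identified.
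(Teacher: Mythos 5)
Your proposal is correct and follows essentially the same route as the paper's proof: Taylor expansion at the \textsc{map} with the first--order term vanishing, exact cancellation of the quadratic term against $\hat\omega^{-1}_{st}\hat h_s\hat h_t/2$ and of the cubic term via the expansion of $\log\{2F(\hat\alpha_\eta(\hat h))\}$ using \eqref{assump:F} and $|\log(1+x)-x|\le x^2$, followed by uniform bounds of the remainders over $\hat K_n$ yielding the constant $c_2^*$ (with the $1/(2\eta)$ factor absorbed into $r_{F,\delta}$, exactly the constant bookkeeping you flag). No gaps beyond that minor bookkeeping, which the paper resolves in the same way.
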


\begin{proof}
	First note that since $\hat \theta$ is the \textsc{map}, the first derivative of the log--posterior is zero  by definition when evaluated at \smash{$\hat \theta$}. As a consequence using Assumption \ref{M2}, together with the combination of a third order Taylor expansion of the log--likelihood with a first order Taylor expansion of the log--prior gives  
	\begin{equation}\label{help:fin:map0}
		\begin{aligned}
			&\log \Big[ \frac{p_{ \hat \theta + \hat h/\sqrt{n} }}{p_{\hat \theta}}(X^n)\frac{ \pi(\hat \theta +  \hat h/\sqrt{n} )}{\pi(\hat \theta)}  \Big] + \frac{\hat \omega^{-1}_{st}}{2}  \hat h_s  \hat h_t -  \frac{1}{6 \sqrt{n}}  \frac{\ell^{(3)}_{\hat \theta, stl}}{n} \hat h_s \hat h_t  \hat h_l \\
			  &\qquad= \frac{1}{24 n}  \frac{\ell^{(4)}_{\hat \theta+ \beta \hat h/\sqrt{n}, stlk}}{n} \hat h_s \hat h_t  \hat h_l  \hat h_k +  \frac{1}{2 n}\log \pi_{\hat \theta+ \beta' \hat h/\sqrt{n}, st}^{(2)} \hat h_s \hat h_t,
		\end{aligned}
	\end{equation} 
	for some $\beta, \beta' \in [0,1]$.  As a consequence, conditioned on  $\hat{A}_{n,2}$, by the Cauchy--Schwarz~inequality and the upper bounds on the spectral norms of \smash{$\ell^{(3)}_{\theta}/n,$ $\ell^{(4)}_{\theta}/n$, and $\log \pi^{(2)}_{\theta}$} (see Assumption \ref{M2}), we have, for $n$ satisfying $2M_n\sqrt{d}/\sqrt{n}\leq \delta$, that
	\begin{equation}  \label{rem:3}
	\begin{split}
		&\sup_{\hat h \in \hat  K_n} \Big| \frac{1}{6 \sqrt{n}} \frac{\ell^{(3)}_{\hat \theta, stl}}{n} \hat h_s \hat h_t  \hat h_l \Big|  \leq \frac{4d^{3/2} L_3 M_n^{3}}{3 \sqrt{n}},\\
		&\sup_{\hat h \in \hat  K_n} \Big| \frac{1}{24 n} \frac{\ell^{(4)}_{\hat \theta+ \beta' \hat h/\sqrt{n}, stlk}}{n} \hat h_s \hat h_t  \hat h_l  \hat h_k \Big|  \leq \frac{2 d^2 L_4 M_n^4}{3 n},\\
	&\sup_{\hat h \in \hat  K_n} \Big| \frac{1}{2 n}\log \pi_{\hat \theta+ \beta' \hat h/\sqrt{n}, st}^{(2)} \hat h_s \hat h_t\Big|  \leq \frac{2d L_{\pi,2} M_n^2}{n}.	
	\end{split}
	\end{equation} 
	Furthermore, in view of condition \eqref{assump:F}, we also have that
	\begin{equation} \label{help:fin:00}
		\begin{aligned}
			\log\{ 2 F(\hat{\alpha}_{\eta}(\hat h)) \} & =  \log \Big[ 1 +  \frac{1}{6 \sqrt{n}}  \frac{\ell^{(3)}_{\hat \theta, stl}}{n} \hat h_s \hat h_t  \hat h_l + {r}_{F,\delta}\Big(  \frac{1}{6 \sqrt{n}}  \frac{\ell^{(3)}_{\hat \theta, stl}}{n} \hat h_s \hat h_t  \hat h_l \Big)  \Big],
		\end{aligned}
	\end{equation}
where in the remainder part the additional $1/(2\eta)$ term is incorporated within the function ${r}_{F,\delta}(\cdot)$. 
	
	By combining the above displays we get the following upper bound for $|\hat{r}_{n,4}|$,
	\begin{equation} \label{help:fin:map1}
	\scalemath{1}{	\begin{aligned}
			|\hat{r}_{n,4}|		& \leq \sup_{\hat h \in \hat  K_n} \Big|  \frac{1}{6 \sqrt{n}}  \frac{\ell^{(3)}_{\hat \theta, stl}}{n} \hat h_s \hat h_t  \hat h_l  -  \log \Big \lbrace 1 +  \frac{1}{6 \sqrt{n}}  \frac{\ell^{(3)}_{\hat \theta, stl}}{n} \hat h_s \hat h_t  \hat h_l +  {r}_{F,\delta}\Big(  \frac{1}{6 \sqrt{n}}  \frac{\ell^{(3)}_{\hat \theta, stl}}{n} \hat h_s \hat h_t  \hat h_l \Big)  \Big \rbrace	 \Big| \\
			& \qquad + \sup_{\hat h \in \hat  K_n}  \Big| \frac{1}{24 n}  \frac{\ell^{(4)}_{\hat \theta+ \beta \hat h/\sqrt{n}, stlk}}{n} \hat h_s \hat h_t  \hat h_l  \hat h_k \Big| + \sup_{\hat h \in \hat  K_n}  \Big| \frac{1}{2 n}\log \pi_{\hat \theta+ \beta' \hat h/\sqrt{n}, st}^{(2)} \hat h_s \hat h_t\Big|.
		\end{aligned}}
	\end{equation} 
	Notice that, by \eqref{rem:3} the last two summands in the above display can be upper--bounded by $2 d^2 L_4 M_n^4/(3 n)$ and $2d L_{\pi,2} M_n^2/n$, respectively. As for the first summand, note that for $n$ large enough satisfying $\eqref{assump:n:large}$ with $\delta<1/4$, we have,  in view of the inequalities $|x-\log(1+x)|\leq x^2$ if $|x|<0.5$ and $(a+b)^2\leq 2(a^2+b^2)$, Assumption \eqref{assump:F}, and Cauchy--Schwarz inequality, that such a summand can be upper--bounded by
	\begin{equation*}
		\begin{aligned}
			&\sup_{\hat h \in \hat  K_n} \Big| \frac{1}{6 \sqrt{n}}  \frac{\ell^{(3)}_{\hat \theta, stl}}{n} \hat h_s \hat h_t  \hat h_l +{r}_{F,\delta}\Big(  \frac{1}{6 \sqrt{n}}  \frac{\ell^{(3)}_{\hat \theta, stl}}{n} \hat h_s \hat h_t  \hat h_l \Big)  \Big|^2 + \Big| {r}_{F,\delta}\Big(  \frac{1}{6 \sqrt{n}}  \frac{\ell^{(3)}_{\hat \theta, stl}}{n} \hat h_s \hat h_t  \hat h_l \Big) \Big| \\
			&\qquad \qquad \leq \frac{16(2+L_{F,\delta}) L_3^2  d^3 M_n^6 }{9n}+ \frac{ 2 \cdot 16^2L_{F,\delta}^2 L_3^4  d^6 M_n^{12} }{9^2n^2}. 
		\end{aligned}
	\end{equation*}
Combining the above upper bound with those for the last two summands in \eqref{help:fin:map1} yields, for $n$ large enough satisfying $\eqref{assump:n:large}$, 
	\begin{equation*}
		\begin{split}
			|\hat{r}_{n,4}|	 &\leq \frac{16(2+L_{F,\delta}) L_3^2  d^3 M_n^6 }{9n}+ \frac{2 \cdot 16^2L_{F,\delta}^2 L_3^4  d^6 M_n^{12} }{9^2n^2}+\frac{2 d^2 L_4 M_n^4}{3 n}+ \frac{2d L_{\pi,2} M_n^2}{n}\\
			& \leq \frac{d^3 M^6_n}{n} \left[ \frac{16(2+L_{F,\delta}) L_3^2  }{9}+ \frac{2 \cdot 16^2L_{F,\delta}^2 L_3^4 }{9^2}+\frac{2 L_4}{3}+ 2L_{\pi,2}\right]=  \frac{d^3 M^6_n}{n} c^*_2,
		\end{split}
	\end{equation*}
thereby concluding the proof.
\end{proof}

\begin{lemma}[Concentration Gaussian modal approximation]  \label{lemma:gauss:contr:map}
	Let  \smash{$M_n = \sqrt{c_0 \log n}$} and denote with \smash{$P_{\hat{\Omega}}$} the centered Gaussian distribution for $\hat{h}$ with covariance matrix \smash{$\hat{\Omega}$}. Conditioned on the event $\hat{A}_{n,1} = \{\lambda_{\textsc{min}}(\hat{\Omega}^{-1}) > \bar{\eta_1}\}\cap \{\lambda_{\textsc{max}}(\hat{\Omega}^{-1}) < \bar{\eta_2}\}$ it holds that
\begin{equation} \label{Gaus:modal:conc}
		P_{\hat{\Omega}}(\hat h:\, \| \hat h\| > 2 M_n\sqrt{d}) \leq 2 n^{-2\bar{\eta}_1 c_0}.
	\end{equation} 
\end{lemma}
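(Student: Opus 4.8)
The plan is to reduce the claimed tail bound for $\hat h$ to an upper–tail bound for a chi--square variable and then invoke a Chernoff estimate. On the event $\hat{A}_{n,1}$ one has $\lambda_{\textsc{max}}(\hat\Omega)=1/\lambda_{\textsc{min}}(\hat\Omega^{-1})<1/\bar\eta_1$ (note that it is the \emph{largest} eigenvalue of $\hat\Omega$, not the smallest, that is relevant). Writing $\hat h$ as $\hat\Omega^{1/2}Z$ with $Z$ a standard $d$--variate Gaussian, we get $\|\hat h\|^2=Z^\top\hat\Omega Z\le\|Z\|^2/\bar\eta_1$ almost surely, so that
\begin{equation*}
P_{\hat\Omega}(\|\hat h\|>2M_n\sqrt{d})\le P(\|Z\|^2>4\bar\eta_1 d M_n^2).
\end{equation*}
Since $\|Z\|^2$ is $\chi^2_d$--distributed with $\E\, e^{t\|Z\|^2}=(1-2t)^{-d/2}$ for $t<1/2$, Markov's inequality at $t=1/4$ gives $P(\|Z\|^2>s)\le 2^{d/2}e^{-s/4}$ for every $s>0$; plugging $s=4\bar\eta_1 d M_n^2$ and $M_n^2=c_0\log n$ yields $P(\|Z\|^2>4\bar\eta_1 d M_n^2)\le(\sqrt{2}\,n^{-\bar\eta_1 c_0})^d$.

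It then remains to compare $(\sqrt{2}\,n^{-\bar\eta_1 c_0})^d$ with the asserted bound $2n^{-2\bar\eta_1 c_0}=(\sqrt{2}\,n^{-\bar\eta_1 c_0})^2$. If $\sqrt{2}\,n^{-\bar\eta_1 c_0}\ge1$, the right--hand side of \eqref{Gaus:modal:conc} is at least $1$ and the claim is trivial; if $\sqrt{2}\,n^{-\bar\eta_1 c_0}<1$ and $d\ge2$, then $(\sqrt{2}\,n^{-\bar\eta_1 c_0})^d\le(\sqrt{2}\,n^{-\bar\eta_1 c_0})^2=2n^{-2\bar\eta_1 c_0}$ and we are done (for growing $d$ the bound is then far from tight, which is harmless). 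The sole remaining configuration is $d=1$ with $\sqrt{2}\,n^{-\bar\eta_1 c_0}<1$, for which the crude $\lambda=1/4$ Chernoff constant $\sqrt{2}$ is too lossy; here one instead uses the sharp Gaussian tail $1-\Phi(x)\le\tfrac12 e^{-x^2/2}$ for $x\ge0$, obtaining $P(\|Z\|^2>4\bar\eta_1 M_n^2)=2\{1-\Phi(2\sqrt{\bar\eta_1}M_n)\}\le e^{-2\bar\eta_1 M_n^2}=n^{-2\bar\eta_1 c_0}\le 2n^{-2\bar\eta_1 c_0}$.

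I expect no genuine obstacle, since this is a standard Gaussian concentration statement. The only points needing a touch of care are getting the direction of the eigenvalue inequality right and treating the $d=1$ case, where the factor--of--two slack in the radius $2M_n\sqrt{d}$ is exactly what a sharp Mills--ratio bound (as opposed to the crude Chernoff estimate used for $d\ge2$) recovers; neither step requires any hypothesis on $n$ or $c_0$ beyond those already in force, so the lemma holds as stated.
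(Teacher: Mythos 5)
Your proposal is correct, and it is not quite the paper's own argument, so a comparison is worthwhile. Both proofs whiten ($\hat h \stackrel{d}{=} \hat\Omega^{1/2}Z$ with $Z\sim\mbox{N}_d(0,\mathrm{I}_d)$) and use the eigenvalue control supplied by $\hat A_{n,1}$, but they spend the $\sqrt d$ in the radius differently. The paper writes $\hat\Omega=\Gamma\Lambda\Gamma^{\intercal}$ and bounds $\|\hat h\|\le\|\Gamma\Lambda^{1/2}\|_F\|Z\|\le\sqrt{d/\bar\eta_1}\,\|Z\|$ via the Frobenius norm, which removes the $\sqrt d$ from the threshold and reduces the claim to the dimension--free tail bound $P(\|Z\|>2\sqrt{\bar\eta_1}M_n)\le 2e^{-2\bar\eta_1 M_n^2}$, invoked as ``Hoeffding's inequality''. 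You instead keep the sharper spectral bound $\|\hat h\|^2=Z^{\intercal}\hat\Omega Z\le\|Z\|^2/\bar\eta_1$, so the reduction is to the chi--square tail $P(\|Z\|^2>4\bar\eta_1 dM_n^2)$, which you handle by a Chernoff bound at $t=1/4$; the $2^{d/2}$ prefactor is absorbed because the exponent also scales with $d$, the case $d\ge2$ versus the trivial case follows by comparing $(\sqrt2\,n^{-\bar\eta_1c_0})^d$ with $(\sqrt2\,n^{-\bar\eta_1c_0})^2$, and the refinement $1-\Phi(x)\le\tfrac12e^{-x^2/2}$ settles $d=1$. What your route buys is uniformity in the dimension: your bound holds verbatim for every $d$ and $n$ with no side conditions, whereas the inequality $P(\|Z\|>t)\le 2e^{-t^2/2}$ used in the paper is, as stated, a univariate bound --- for a $d$--variate standard Gaussian it requires $t$ to dominate $\E\|Z\|\asymp\sqrt d$ (e.g. through $P(\|Z\|\ge\sqrt d+u)\le e^{-u^2/2}$), so that step is loose precisely in the growing--$d$, non--asymptotic regime in which Lemma \ref{lemma:gauss:contr:map} is deployed inside Theorem \ref{thm:modal:fs}. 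Your chi--square argument closes that looseness at the modest price of the final case analysis; the paper's version is shorter but leans on a tail inequality whose stated form only becomes exact after the dimension dependence has already been paid for elsewhere.
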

\begin{proof}
Let us write the covariance matrix in the form $\hat{\Omega}=\Gamma \Lambda \Gamma^\intercal$, where  $\Gamma$ comprises~the eigenvectors of \smash{$\hat{\Omega}$} as columns, whereas $ \Lambda$ is the diagonal matrix of eigenvalues. Then, we have that \smash{$\|\hat{h}\|\stackrel{d}{=}\|\Gamma \Lambda^{1/2} Z\| $}, with $Z \sim \mbox{N}_d(0,\mathrm{I}_d)$. Note that for every fixed $Z \in \mathbbm{R}^d$, Cauchy--Schwarz inequality gives
	\begin{equation*}
		\|\Gamma \Lambda^{1/2} Z\| \leq \|\Gamma \Lambda^{1/2}\|_{F} \| Z\| \leq (d/\bar{\eta}_1)^{1/2} \| Z\|,
	\end{equation*} 
where $\|\cdot\|_F$ denotes the Frobenius norm, while  the last inequality follows from $\|\Gamma \Lambda^{1/2}\|_{F} = (\sum_{i=1}^d \Lambda_{ii})^{1/2} \leq (d\lambda_{\textsc{max}}(\hat{\Omega}))^{1/2} $ and the conditioning on $\hat{A}_{n,1}$. Finally, an application of Hoeffding's inequality provides
\begin{equation*}
P_{\hat{\Omega}}(\hat h:\, \| \hat h\| > 2 M_n\sqrt{d}) \leq
P\Big( \|Z\| > \sqrt{\bar{\eta}_1} 2M_n \Big) \leq 2 \exp (   -2 \bar{\eta}_1 M_n^2) = 2 n^{-2 \bar{\eta}_1 c_0},
\end{equation*}
concluding the proof of the lemma.
\end{proof}

\begin{lemma}[Posterior contraction about \textsc{map}] \label{lemma:post:contr:map}
	Under the conditions of Theorem \ref{thm:modal:fs}, on the event $\hat{A}_{n,4}$, for $2\vee M_n^6d^3\leq n$ and $c_0$ satisfying \eqref{cond_c0:2} we have that
	\begin{eqnarray*}
		  \Pi_n(\hat{K}_n^c )  \leq  n^{-(c_0 c_5/2)d}.
	\end{eqnarray*} 
\end{lemma}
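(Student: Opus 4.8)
The plan is to run the argument of Lemma~\ref{lemma:post:contr}, but centred at $\hat\theta$ instead of $\theta_*$ and with every constant and the dimension $d$ kept explicit. Working in the original parametrization, write $\hat K_n^c=\{\theta:\|\theta-\hat\theta\|>2M_n\sqrt d/\sqrt n\}$ and
\[
\Pi_n(\hat K_n^c)=\frac{\int_{\hat K_n^c}e^{\ell(\theta)}\pi(\theta)\,d\theta}{\int_{\Theta}e^{\ell(\theta)}\pi(\theta)\,d\theta}.
\]
On the event $\hat A_{n,3}\supseteq\hat A_{n,4}$ one has $\ell(\theta)-\ell(\hat\theta)<-c_5M_n^2d+L_{\pi,\delta}$ for every $\theta\in\hat K_n^c$, so $e^{\ell(\theta)}\pi(\theta)<e^{\ell(\hat\theta)}n^{-c_0c_5d}e^{L_{\pi,\delta}}\pi(\theta)$; integrating and using $\int\pi=1$ bounds the numerator by $e^{\ell(\hat\theta)}n^{-c_0c_5d}e^{L_{\pi,\delta}}$.

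For the denominator I would keep only the ball $\{\|\theta-\hat\theta\|\le 2M_n\sqrt d/\sqrt n\}$, which under \eqref{assump:n:large} is contained in $B_\delta(\hat\theta)$. There, the same third order expansion of $\ell$ and first order expansion of $\log\pi$ used in Lemma~\ref{lemma:modal:taylor} (namely \eqref{help:fin:map0}), combined with the spectral bounds of $\hat A_{n,2}$ and the constraint $M_n^6d^3\le n$, give --- exactly as in \eqref{rem:3} --- that the cubic, quartic and prior remainders are bounded by their respective coefficients in $c_1^*$, so that
\[
\ell(\theta)-\ell(\hat\theta)+\log\pi(\theta)-\log\pi(\hat\theta)\ge-\tfrac12 n\,\hat\omega^{-1}_{st}(\theta-\hat\theta)_s(\theta-\hat\theta)_t-c_1^*.
\]
Exponentiating, using $\lambda_{\textsc{max}}(\hat\Omega^{-1})<\bar\eta_2$ on $\hat A_{n,1}$, $\pi(\hat\theta)\ge C_{\pi,\delta}$ on $\hat A_{n,0}$ (for $n$ large, since $\|\hat\theta-\theta_*\|\le M_n\sqrt d/\sqrt n<\delta$), and the elementary fact that a centred Gaussian with precision $\bar\eta_2 I_d$ retains at least half its mass inside the ball of radius $2M_n\sqrt d$ once $M_n$ is large, the change of variables $v=\sqrt n(\theta-\hat\theta)$ yields the lower bound $\tfrac12 C_{\pi,\delta}e^{-c_1^*}(2\pi/\bar\eta_2)^{d/2}n^{-d/2}e^{\ell(\hat\theta)}$ for the denominator.

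Dividing, the $e^{\ell(\hat\theta)}$ factors cancel and one gets
\[
\Pi_n(\hat K_n^c)\le\frac{2e^{L_{\pi,\delta}+c_1^*}}{C_{\pi,\delta}}\Big(\frac{\bar\eta_2}{2\pi}\Big)^{d/2}n^{d/2-c_0c_5d}.
\]
Writing the right-hand side as $n^{-(c_0c_5/2)d}$ times $\tfrac{2e^{L_{\pi,\delta}+c_1^*}}{C_{\pi,\delta}}(\bar\eta_2/(2\pi))^{d/2}n^{(1-c_0c_5)d/2}$, taking logarithms and dividing by $d$, the remaining factor is at most $1$ precisely because $c_0$ satisfies \eqref{cond_c0:2}, i.e. $c_0c_5/2\ge (c_1^*+L_{\pi,\delta})/d-\log(C_{\pi,\delta}/2)/d+\tfrac12\log(\bar\eta_2/(2\pi))$, once $n$ is as large as the hypotheses of Theorem~\ref{thm:modal:fs} require; this gives $\Pi_n(\hat K_n^c)\le n^{-(c_0c_5/2)d}$. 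The delicate step is the denominator estimate: one must bound the Taylor remainder uniformly over the whole of $\hat K_n$ with the correct explicit powers of $d$ and $M_n$ --- this is exactly where $M_n^6d^3\le n$ is used --- and then track $c_1^*,L_{\pi,\delta},C_{\pi,\delta},\bar\eta_2$ carefully enough that \eqref{cond_c0:2} carries out the final bookkeeping.
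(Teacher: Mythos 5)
Your route is the same as the paper's: bound $\Pi_n(\hat K_n^c)$ by the ratio of the integral over $\hat K_n^c$ to the integral over the ball, control the numerator through $\hat{A}_{n,3}$ (giving $n^{-c_0c_5d}e^{L_{\pi,\delta}}$ times the likelihood at $\hat\theta$), and lower bound the denominator through the third--order expansion \eqref{help:fin:map0} with the remainder bounds \eqref{rem:3} collected into $c_1^*$ as in \eqref{def:star:c1}, the prior lower bound $C_{\pi,\delta}$, the eigenvalue bound $\bar\eta_2$, and a ``half the Gaussian mass stays in the ball'' step, before closing with \eqref{cond_c0:2}. Your only structural deviations are cosmetic: you work in the $\theta$--scale rather than the $\hat h$--scale, and you obtain the half--mass bound by dominating the quadratic form with $\bar\eta_2\|v\|^2$, whereas the paper invokes Lemma \ref{lemma:gauss:contr:map} (which uses $\bar\eta_1$ and contributes the $\vee\,2/\bar\eta_1$ branch of \eqref{cond_c0:2}); both work, though yours should be stated with the explicit threshold on $M_n$ rather than ``once $M_n$ is large,'' since the lemma is non--asymptotic.

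The one step that does not go through as written is the final bookkeeping. Your explicit bound is $\tfrac{2e^{L_{\pi,\delta}+c_1^*}}{C_{\pi,\delta}}(\bar\eta_2/(2\pi))^{d/2}\,n^{d/2-c_0c_5d}$, i.e.\ it carries the factor $n^{d/2}$ coming from the volume of the ball (equivalently the Jacobian between the $\theta$-- and $\hat h$--scales). Condition \eqref{cond_c0:2} is calibrated to give exactly $(c_0c_5/2)d\geq c_1^*+L_{\pi,\delta}+\tfrac d2\log(\bar\eta_2/(2\pi))-\log(C_{\pi,\delta}/2)$, so it absorbs the constants into half of the exponent budget but leaves nothing with which to absorb $n^{d/2}$: after extracting $n^{-(c_0c_5/2)d}$ you are left with $n^{(1-c_0c_5)d/2}$ times those constants, and making this $\leq 1$ requires $(c_0c_5-1)(d/2)\log n$ to dominate the constant term, i.e.\ effectively $c_0c_5>1$, which neither \eqref{cond_c0:2} nor the sample--size conditions \eqref{assump:n:large} and $2\vee M_n^6d^3\leq n$ guarantee. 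So ``the remaining factor is at most $1$ precisely because $c_0$ satisfies \eqref{cond_c0:2}'' overstates what that condition delivers. Note that the paper's own proof does not face this because its numerator bound \eqref{contraction:numerator:map} is stated without the $\int_{\hat K_n^c}\pi(\hat\theta+\hat h/\sqrt n)\,d\hat h\leq n^{d/2}$ factor, so \eqref{contraction:map} closes exactly under \eqref{cond_c0:2}; your more careful accounting exposes this extra power of $n$, and to finish rigorously you must either impose in addition something like $c_0\geq 2/c_5$ (so the $n^{(1-c_0c_5)d/2}$ term decays and swallows the constants for the admissible $n$), or adopt the paper's accounting as stated.
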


\begin{proof}[proof of Lemma~\ref{lemma:post:contr:map}] 
 First note that
	\begin{equation} \label{help:lemma:conc:1:map}
		\Pi_n(\hat{K}_n^c)  \leq \frac{\int_{\hat{K}_n^c} (p_{\hat{\theta} + \hat{h}/\sqrt{n}}/p_{\hat{\theta}})(X^{n}) \pi(\hat{\theta} + \hat{h}/\sqrt{n}) d\hat{h} }{\int_{\hat{K}_n}(p_{\hat{\theta} + \hat{h}/\sqrt{n}}/p_{\hat{\theta}})(X^{n}) \pi(\hat{\theta} + \hat{h}/\sqrt{n}) d\hat{h}}. 	
	\end{equation}
    Then, in view of $M_n = \sqrt{c_0 \log n}$, on the event $\hat{A}_{n,4}$, 
	\begin{equation} \label{contraction:numerator:map}
		\begin{aligned}
			\int_{  \hat{K}_n^c} (p_{\hat{\theta} + \hat{h}/\sqrt{n}}/p_{\hat{\theta}})(X^{n}) \pi(\hat{\theta} + \hat{h}/\sqrt{n}) d\hat{h} \leq n^{-c_0 c_5d} e^{L_{\pi, \delta}}.
		\end{aligned}
	\end{equation}
	For the denominator of the right--hand--side of \eqref{help:lemma:conc:1:map}, we use part of the results in the proof of Lemma \ref{lemma:modal:taylor} and the fact that, conditioned on $\hat{A}_{n,4}$, $\{ \bar{\eta}_1 < \lambda_{\textsc{min}}(\hat \Omega^{-1}) \} \cap \{\lambda_{\textsc{max}}(\hat \Omega^{-1}) < \bar{\eta}_2 \}.$ In particular, it follows from \eqref{help:fin:map0} and \eqref{rem:3} that 
		\begin{equation*}
		\begin{aligned}
			&\log \Big[ \frac{p_{ \hat \theta + \hat h/\sqrt{n} }}{p_{\hat \theta}}(X^n)\frac{ \pi(\hat \theta +  \hat h/\sqrt{n} )}{\pi(\hat \theta)}  \Big] = -\frac{\hat \omega^{-1}_{st}}{2}  \hat h_s  \hat h_t + \hat{r}_{n,2}(\hat h),
		\end{aligned}
	\end{equation*} 
	where 
	\begin{equation*}
		 \sup_{ \hat h \in  \hat K_n} | \hat{r}_{n,2}(\hat h)| \leq  \frac{d^{3/2} M_n^3}{\sqrt{n}} \Big( \frac{ 4L_3}{3} + \frac{2L_4 \sqrt{d} M_n }{3 \sqrt{n}} +  \frac{ 2L_{\pi,2}}{\sqrt{d}M_n\sqrt{n}}\Big)
		 \leq   c_1^* \frac{d^{3/2}M_n^3}{\sqrt{n}},
	\end{equation*}
with $c_1^*$ is defined in \eqref{def:star:c1}, for $\sqrt{d}M_n/\sqrt{n}\leq 1$.
    As a consequence, conditioned on $\hat{A}_{n,4}$,
    \begin{equation} \label{contraction:den:map}
    	\begin{aligned}
    		&\int_{\hat{K}_n}(p_{\hat{\theta} + \hat{h}/\sqrt{n}}/p_{\hat{\theta}})(X^{n}) \pi(\hat{\theta} + \hat{h}/\sqrt{n}) d\hat{h}\\
    		&\qquad \geq \pi(\hat \theta) (2\pi)^{d/2} |\hat{\Omega}|^{1/2} \exp( - c_1^*  \frac{M_n^3d^{3/2}}{\sqrt{n}}) P_{\hat{\Omega}}(\|\hat h\| \leq 2\sqrt{d} M_n).
    	\end{aligned}
    \end{equation}
 Then using \eqref{contraction:numerator:map} and \eqref{contraction:den:map}, the inequalities $|\hat{\Omega}|^{-1/2} \leq \bar{\eta}_2^{d/2} $ and $P_{\hat{\Omega}}(\|\hat h\| > 2 M_n\sqrt{d}) \leq 2n^{-2\bar{\eta}_1 c_0}\leq1/2 $ for $ c_0\geq 1/\bar\eta_1$ and $n\geq 2$,  Lemma \ref{lemma:gauss:contr:map}, Assumptions \ref{cond:3} and \eqref{def:L_pi}, we get~for~$n$ large enough, satisfying $2\vee d^3M_n^6\leq n$, and $c_0$ large enough satisfying \eqref{cond_c0:2}, that
  \begin{equation} \label{contraction:map}
  	\scalemath{0.94}{\begin{aligned}
  			&\frac{\int_{\hat{K}_n^c} (p_{\hat{\theta} + \hat{h}/\sqrt{n}}/p_{\hat{\theta}})(X^{n}) \pi(\hat{\theta} + \hat{h}/\sqrt{n}) d\hat{h} }{\int_{\hat{K}_n}(p_{\hat{\theta} + \hat{h}/\sqrt{n}}/p_{\hat{\theta}})(X^{n}) \pi(\hat{\theta} + \hat{h}/\sqrt{n}) d\hat{h}}\\ 
  			&\leq  \exp\Big[-c_0 c_5d\log n+   L_{\pi, \delta}+ c_1^* \frac{d^{3/2} M_n^3}{\sqrt{n}} +\frac{d\log (\bar\eta_2/2\pi)}{2}-\log \frac{C_{\pi,\delta}}{2}  \Big]\leq n^{-(c_0 c_5/2)d},
  	\end{aligned}}
  \end{equation}
concluding the proof of the lemma.
\end{proof}
%%%%%%%%%%%%%%%%%%%%%%%%%%%%%%%%%%%%%%%%%%%%%%%%%%%%%%%%%%%%%

%%%%%%%%%%%%%%%%%%%%%%%%%%%%%%%%%%%%%%%%%%%%%%%%%%%%%%%%%%%%%

\begin{theorem} \label{thm:modal:gauss:lower}
	Let us consider the assumptions and notations of Theorem \ref{thm:modal:fs}. Furthermore, assume that there exists a $\delta>0$ such that, for all $\theta, \theta' \in \{\theta \, : \, \|\theta -\theta_{*}\| < \delta \}$,~it holds \smash{$| \ell^{(3)}_{\theta,stl}-\ell^{(3)}_{\theta',stl}|/n \leq L_{3,2} \|\theta - \theta'\| $} for a positive constant $L_{3,2}>0$ and for every $s,t,l \in \{1,\dots,d\}$. Moreover, let
	\begin{equation*}
	h^*  = \argmax_{\hat{h} \, :\, \|\hat{h}\| = 1} | (\ell^{(3)}_{\theta_*, stl}/n) \hat{h}_s \hat{h}_t \hat{h}_l |,
	\end{equation*}
	and assume that 
	\begin{align}
	M^*:=\inf_n |(\ell^{(3)}_{\theta_*, stl}/n) h^*_s h^*_t h^*_l |> 0.\label{cond:asym}
	\end{align}
	 Then, conditioned on the event $\hat{A}_{n,4}$, for $n$ large enough satisfying $ M_n\sqrt{d}/\sqrt{n}\leq \delta$, the total variation distance between the posterior distribution and the Gaussian Laplace approximation has the following lower bound
	\begin{equation*}
		\| \Pi_n(\cdot) - \hat{P}_{\textsc{gm}}^n(\cdot) \|_{\textsc{tv}} \geq \frac{1}{\sqrt{n}} C_{d} - \frac{M_n^6 d^3}{n} \left(2\hat{r}_{\textsc{s-tv}}(n,d) + \frac{4L_{3,2}  }{3 d M^2_n} + \frac{ 16L_{F,\delta} L_3^2 }{9}\right),
	\end{equation*} 
	where $C_{d}>0$ is a constant possibly depending on $d$,  whereas $\hat{P}_{\textsc{gm}}^n(S) \, = \, \int_{S} \phi_d(\hat{h}; 0, \hat{\Omega}) d\hat{h}$ for measurable $S \subset \mathbbm{R}^{d}$. 
\end{theorem}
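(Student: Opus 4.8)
The plan is to lower--bound the total variation distance $\| \Pi_n(\cdot) - \hat P^n_{\textsc{gm}}(\cdot) \|_{\textsc{tv}}$ by first passing through the skew--modal approximation $\hat P^n_{\textsc{sks}}$, and then exhibiting a single Borel set on which $\hat P^n_{\textsc{sks}}$ and $\hat P^n_{\textsc{gm}}$ differ by a quantity of exact order $1/\sqrt n$. Concretely, I would use the reverse triangle inequality
\begin{equation*}
\| \Pi_n(\cdot) - \hat P^n_{\textsc{gm}}(\cdot) \|_{\textsc{tv}} \ \geq\ \| \hat P^n_{\textsc{sks}}(\cdot) - \hat P^n_{\textsc{gm}}(\cdot) \|_{\textsc{tv}} - \| \Pi_n(\cdot) - \hat P^n_{\textsc{sks}}(\cdot) \|_{\textsc{tv}},
\end{equation*}
where the last term is already controlled, on $\hat A_{n,4}$, by Theorem~\ref{thm:modal:fs} as $(M_n^6 d^3/n)\hat r_{\textsc{s-tv}}(n,d)$. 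So the crux is a \emph{lower} bound on $\| \hat P^n_{\textsc{sks}}(\cdot) - \hat P^n_{\textsc{gm}}(\cdot) \|_{\textsc{tv}}$ of order $1/\sqrt n$, driven by the genuine local asymmetry hypothesis \eqref{cond:asym}.

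For that lower bound I would pick a test set of the form $S = \{\hat h : \hat\alpha_\eta(\hat h) > 0\}$, i.e.\ the region where the skewing factor $F(\hat\alpha_\eta(\hat h))$ exceeds $1/2$; equivalently the half--space--like cone $\{\hat h : (\ell^{(3)}_{\hat\theta,stl}/n)\hat h_s\hat h_t\hat h_l > 0\}$. By skew--symmetry, $\phi_d(\cdot;0,\hat\Omega)$ assigns mass $1/2$ to $S$, while
\begin{equation*}
\hat P^n_{\textsc{sks}}(S) - \hat P^n_{\textsc{gm}}(S) \ =\ \int_S 2\phi_d(\hat h;0,\hat\Omega)\big(F(\hat\alpha_\eta(\hat h)) - \tfrac12\big) d\hat h \ =\ \int \phi_d(\hat h;0,\hat\Omega)\,\big|2F(\hat\alpha_\eta(\hat h))-1\big|\,d\hat h / 2,
\end{equation*}
after folding the symmetric complement. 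Using the expansion $F(x) = 1/2 + \eta x + r_{F,\delta}(x)$ with $|r_{F,\delta}(x)|\leq L_{F,\delta}x^2$ from \eqref{assump:F}, and the fact that $\hat\alpha_\eta(\hat h) = \{1/(12\eta\sqrt n)\}(\ell^{(3)}_{\hat\theta,stl}/n)\hat h_s\hat h_t\hat h_l$, the leading term is $\tfrac{1}{12\sqrt n}\int \phi_d(\hat h;0,\hat\Omega)\,|(\ell^{(3)}_{\hat\theta,stl}/n)\hat h_s\hat h_t\hat h_l|\,d\hat h$. I would then lower--bound this Gaussian integral of the absolute value of the cubic form by a constant $C_d/\sqrt n$: restrict the integral to a fixed--radius annulus around the direction $h^*$ realizing the supremum in \eqref{cond:asym}, use the Lipschitz bound $|\ell^{(3)}_{\theta,stl}-\ell^{(3)}_{\theta',stl}|/n \leq L_{3,2}\|\theta-\theta'\|$ to transfer from $\hat\theta$ to $\theta_*$ (incurring an error $O(L_{3,2} M_n\sqrt d/(\sqrt n))\cdot\|\hat h\|^3$, i.e.\ of order $M_n^6 d^3/(n)$ after integration against $\phi_d$), invoke the uniform positivity $M^*>0$, and use that the eigenvalues of $\hat\Omega$ are bounded above and below on $\hat A_{n,1}$ so that the Gaussian density is bounded below on the relevant compact region. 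Collecting the remainders $L_{3,2}$-transfer error, the $r_{F,\delta}$--quadratic error $\leq L_{F,\delta}\int\phi_d\,\hat\alpha_\eta(\hat h)^2 d\hat h$ of order $(16L_{F,\delta}L_3^2/9)M_n^6 d^3/n$, and the $2(M_n^6 d^3/n)\hat r_{\textsc{s-tv}}(n,d)$ coming from Theorem~\ref{thm:modal:fs}, yields exactly the claimed bound.

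The final sentence of the theorem---the lower bound on $\| \Pi_n - \hat P^n_{\textsc{gm}}\|_{\textsc{tv}} - \| \Pi_n - \hat P^n_{\textsc{sks}}\|_{\textsc{tv}}$---then follows immediately from the same chain: that difference is $\geq \| \hat P^n_{\textsc{sks}} - \hat P^n_{\textsc{gm}}\|_{\textsc{tv}} - 2\| \Pi_n - \hat P^n_{\textsc{sks}}\|_{\textsc{tv}}$, and both pieces on the right have already been estimated. I expect the main obstacle to be the \emph{lower} bound on the Gaussian expectation $\E_{\hat\Omega}|(\ell^{(3)}_{\theta_*,stl}/n)\hat h_s\hat h_t\hat h_l|$ uniformly in $d$ and $n$ with an explicitly trackable constant $C_d$: one must ensure the maximizing cubic direction $h^*$ carries a non--negligible amount of Gaussian mass and that the anisotropy of $\hat\Omega$ does not annihilate the cubic form there. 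Restricting to a neighborhood of $h^*$ and using positivity of the density together with \eqref{cond:asym} handles this, but the bookkeeping of constants (and making sure $C_d$ is the honest dependence, not an artifact) is the delicate part; the transfer from $\hat\theta$ to $\theta_*$ via the Lipschitz assumption on $\ell^{(3)}$ is routine by comparison.
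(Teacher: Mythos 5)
Your proposal matches the paper's proof in all essential respects: both lower--bound the distance by passing through the skew--modal approximation via the triangle inequality and Theorem \ref{thm:modal:fs}, expand $F$ to isolate the leading cubic term, transfer $\ell^{(3)}_{\hat\theta}$ to $\ell^{(3)}_{\theta_*}$ via the Lipschitz condition, and lower--bound the Gaussian integral of the absolute cubic form on a small ball around $h^*$ using \eqref{cond:asym} together with the eigenvalue bounds on $\hat\Omega$. The only cosmetic difference is that you bound $\| \hat{P}^n_{\textsc{sks}}(\cdot) - \hat{P}_{\textsc{gm}}^n(\cdot)\|_{\textsc{tv}}$ from below through the test set $\{\hat h : \hat{\alpha}_{\eta}(\hat h)>0\}$, which by skew--symmetry folds into the same integral $\int \phi_d(\hat h;0,\hat\Omega)|2F(\hat{\alpha}_{\eta}(\hat h))-1|d\hat h$ that the paper treats directly; as in the paper, you should restrict to $\{\|\hat h\|<2\sqrt{d}M_n\}$ before expanding $F$ so that the bound $|r_{F,\delta}(x)|\leq L_{F,\delta}x^2$ is legitimately applicable.
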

\begin{proof}
	We start by noting that, conditioned on the event $\hat{A}_{n,4}$ and in view of Theorem \ref{thm:modal:fs}, an application of triangle inequality gives
	\begin{equation*}
		\begin{aligned}
		&\int | \pi_{n}(\hat{h}) - \phi_d(\hat{h}; 0, \hat{\Omega})  |d\hat{h}\\
		 &\qquad \geq  \int | 2 \phi_d(\hat{h}; 0, \hat{\Omega}) \hat{w}(\hat{h})  - \phi_d(\hat{h}; 0, \hat{\Omega})  |d\hat{h} 
			 -\int | \pi_{n}(\hat{h}) - 2 \phi_d(\hat{h}; 0, \hat{\Omega}) \hat{w}(\hat{h})  |d\hat{h} \\
			&\qquad \geq \int_{\hat{h} \, : \, \|\hat{h}\| < 2\sqrt{d}M_n} | 2 \phi_d(\hat{h}; 0, \hat{\Omega}) \hat{w}(\hat{h})  - \phi_d(\hat{h}; 0, \hat{\Omega})  |d\hat{h}
			- \frac{2M_n^6 d^3}{n} \hat{r}_{\textsc{s-tv}}(n,d). 
		\end{aligned}
	\end{equation*}
	Next, notice that for $n$ satisfying \eqref{assump:n:large}, Assumption \eqref{assump:F} on the cdf $F$ and by triangle~inequality, we have that
	\begin{equation} \label{help:lower:0}
		\begin{aligned}
			& \int_{\hat{h} \, : \, \|\hat{h}\| < 2\sqrt{d}M_n} | 2 \phi_d(\hat{h}; 0, \hat{\Omega}) \hat{w}(\hat{h})  - \phi_d(\hat{h}; 0, \hat{\Omega})  |d\hat{h} \\
			 &\qquad = \int_{\hat{h} \, : \, \|\hat{h}\| <  2\sqrt{d}M_n} \Big| \frac{1}{6 \sqrt{n}} \frac{\ell^{(3)}_{\hat \theta, stl}}{n} \hat{h}_s \hat{h}_t \hat{h}_l + {r}_{F,\delta}\Big( \frac{1}{6 \sqrt{n}} \frac{\ell^{(3)}_{\hat \theta, stl}}{n} \hat{h}_s \hat{h}_t \hat{h}_l  \Big)  \Big| \phi_d(\hat{h}; 0, \hat{\Omega}) d\hat{h}\\
			& \qquad \geq \int_{\hat{h} \, : \, \|\hat{h}\| <  2\sqrt{d}M_n} \Big| \frac{1}{6 \sqrt{n}} \frac{\ell^{(3)}_{\hat \theta, stl}}{n} \hat{h}_s \hat{h}_t \hat{h}_l \Big| \phi_d(\hat{h}; 0, \hat{\Omega}) d\hat{h} - \frac{ 16L_{F,\delta} L_3^2   d^3M_n^6}{9 n}.  
		\end{aligned}
	\end{equation}
	Note also that, since by assumption \smash{$\ell^{(3)}_{\theta, stl}/n$} is Lipschitz continuous in a $\delta$--neighborhood of $\theta_*$ and, conditioned on $\hat{A}_{n,4}$ and $\|\hat{\theta} - \theta_{*}\| < M_n\sqrt{d}/\sqrt{n}$, the quantity in the last line of \eqref{help:lower:0} can be lower bounded by
	\begin{equation} \label{LB:final}
		\begin{aligned}
			 \int_{\hat{h} {:} \|\hat{h}\| < 2\sqrt{d}M_n} \Big| \frac{1}{6 \sqrt{n}} \frac{\ell^{(3)}_{\theta_*, stl}}{n} \hat{h}_s \hat{h}_t \hat{h}_l \Big| \phi_d(\hat{h}; 0, \hat{\Omega}) d\hat{h} - \frac{4L_{3,2}  d^2 M_n^4}{3 n} - \frac{ 16L_{F,\delta} L_3^2   d^3M_n^6}{9 n}.  
		\end{aligned}
	\end{equation}
	
	It remains to show that the first term above is lower bounded by a positive constant. Let us introduce the notation 
	 $$B_{\epsilon}(h^*) = \{  \hat{h} \, : \, \hat{h}  = h^* + \epsilon \tilde{h}, \, \, \|\tilde{h}\| \leq1 \}.$$
	 Then, on the event $\hat{A}_{n,4}$, the Cauchy--Schwartz and triangle inequalities together with Assumption~\ref{M2} and the fact that $\|\tilde{h}\| \leq1$ and $\|h^*\| =1$ imply, for every $\hat{h} \in B_{\epsilon}(h^*)$ with $(3\epsilon + 3 \epsilon^2  + \epsilon^3) < M^*/(2 L_3)$, that
	\begin{equation*}
		\begin{aligned}
			\Big| \frac{\ell^{(3)}_{\theta_*, stl}}{n} \hat{h}_s \hat{h}_t \hat{h}_l \Big| &= \Big| \frac{\ell^{(3)}_{\theta_*, stl}}{n} h^*_s h^*_t h^*_l + 3 \epsilon \frac{\ell^{(3)}_{\theta_*, stl}}{n} h^*_s h^*_t \tilde{h}_l + 3  \epsilon^2 \frac{\ell^{(3)}_{\theta_*, stl}}{n} h^*_s \tilde{h}_t \tilde{h}_l +  \epsilon^3\frac{\ell^{(3)}_{\theta_*, stl}}{n} \tilde{h}_s \tilde{h}_t \tilde{h}_l \Big| \\
			&\geq  M^* - L_3  (3\epsilon + 3 \epsilon^2  + \epsilon^3)>M^*/2.
		\end{aligned}
	\end{equation*}
	Since $B_{\epsilon}(h^*) \subset \{\hat{h} : \|\hat{h}\| < 2\sqrt{d}M_n \}$ and  $P_{\hat{\Omega}}( \hat{h} \in B_{\epsilon}(h^*))$ is lower bounded by a positive constant (depending on $d$), the first term in \eqref{LB:final} is also lower bounded by a positive constant times $1/\sqrt{n}$. Including such a lower bound in  \eqref{LB:final} concludes the proof.
\end{proof}

%%%%%%%%%%%%%%%%%%%%%%%%%%%%%%%%%%%%%%%%%%%%%%%%%%%%%%%%%%%%%

\vspace{2pt}
\section{Empirical studies}
In the following, we provide additional results related to the empirical studies considered in Sections~\ref{sec_24} and \ref{sec_32} of the main article. 

\subsection{Misspecified exponential model from Section~\ref{sec_242}}

Table \ref{tab01} reproduces the same analyses reported within Table~\ref{tab} of the main article, but now with a focus on the misspecified exponential model described in detail in Section~\ref{sec_242}. The focus is again on comparing the accuracy of the approximations arising from the classical (\textsc{BvM}) and the skewed (\textsc{s--BvM}) Bernstein--von Mises theorem, respectively.

As discussed in Section~\ref{sec_242}, Table~\ref{tab01} yields the same conclusions as those obtained for the correctly--specified setting in Table~\ref{tab}. These results further stress that the proposed \textsc{sks} class of approximating distributions outperforms remarkably the classical Gaussian arising from Bernstein--von Mises theorem, also in misspecified settings. The magnitude of these improvements is again in line with the expected gains encoded within the rates we derived in Section~\ref{sec_fixd_thm} from a theoretical perspective.

\begin{table}[t] 
	\renewcommand{\arraystretch}{1}
	\centering
	\caption{\footnotesize Empirical comparison, averaged over $50$ replicated studies, between the  classical (\textsc{BvM}) and skewed (\textsc{s--BvM}) Bernstein--von Mises theorem in the misspecified exponential example. The first table shows, for different sample sizes from $n=10$ to $n=1500$, the  log--\textsc{tv} distances (\textsc{tv}) and log--approximation errors for the posterior mean  (\textsc{fmae}) under \textsc{BvM} and \textsc{s--BvM}. The bold values indicate the best performance for each $n$. The second table shows,  for each $n$ from $n=10$ to $n=100$, the sample size $\bar{n}$ required by the classical Gaussian \textsc{BvM} to achieve the same \textsc{tv} and \textsc{fmae} attained by the proposed \textsc{sks} approximation with that $n$.
	\vspace{10pt}} 
	\small
	\begin{adjustbox}{max width=\textwidth}
		\begin{tabular}{lrrrrrr}
			\hline
			\quad & $n=10$  &  $n=50$  &   $n=100$ & $n=500$  &  $n=1000$ &  $n=1500$  \\ 
			 \hline
			$\log \textsc{tv}^n_{\textsc{BvM}}$ \quad & $-1.28$& $-2.16$& $-2.53$& $-3.28$& $-3.60$& $-3.86$ \\ 
			$\log \textsc{tv}^n_{\textsc{s--BvM}}$ \quad  & $\bf-2.32 $& $ \bf-3.59$& $ \bf-4.17$& $\bf -4.49$& $\bf-5.07$& $\bf -5.36$ \\
			\hline
			$\log \textsc{fmae}^n_{\textsc{BvM}}$ \quad &$0.15$& $-0.81$ &$-1.27$& $-2.13$& $-2.18$& $-2.64 $\\ 
			$\log \textsc{fmae}^n_{\textsc{s--BvM}}$ \quad  &$\bf-0.56$& $\bf-2.35$& $\bf-3.30$& $\bf -5.05$& $\bf -6.15$& $\bf -6.80$  \\ 
			\hline
		\end{tabular}
	\end{adjustbox}
	\label{tab01}
\end{table}
\vspace{-8pt}
\begin{table}[t!]
	\centering
	\begin{tabular}{lrrrrrrrr}
		\hline
		& $n=10$ & $n=15$ & $n=20$ & $n=25$ & $n=50$ & $n=75$ & $n=100$ \\ 
		\hline
		$\bar{n}: \ \textsc{tv}^{\bar{n}}_{\textsc{BvM}}=\textsc{tv}^{n}_{\textsc{s--BvM}} $ & 75 & 140 & 210 & 250 & 980 & 1560 & $>2500$ \\ 
		$\bar{n}: \ \textsc{fmae}^{\bar{n}}_{\textsc{BvM}}=\textsc{fmae}^{n}_{\textsc{s--BvM}} $  & 35 & 65 & 140 & 150 & 830 & 1560 & $>2500$ \\ 
		\hline
	\end{tabular}
	\vspace{20pt}
\end{table}

\vspace{20pt}
\subsection{Gamma--Poisson model}
Below we study in detail an additional important example that meets the conditions required to guarantee the validity of Corollary \ref{corol:1} and~Theorem~\ref{thm:modal}. Let $x_1,\dots,x_n$, be independent and identically distributed realizations of a Poisson random variable with mean $\theta$, and consider the case in which a Gamma prior $\mbox{Ga}(\alpha,\beta)$ on $\theta$ is assumed. In this framework, the log--likelihood of the model is $\ell(\theta) = \log(\theta)\sum_{i=1}^n x_i  - n \theta  - \sum_{i=1}^n \log(x_i!)$ while $\pi(\theta) \propto \theta^{\alpha-1} \exp(-\beta \theta)$ for $\alpha, \beta >0$. 

Let us verify that the conditions of Corollary \ref{corol:1} and Theorem \ref{thm:modal} are fulfilled by starting from Assumptions \ref{cond:1}--\ref{cond:2}--\ref{cond:3} and \ref{cond:4} (since we are in a correctly--specified setting, Assumption \ref{cond:uni} holds with $\theta_*=\theta$).  The first four log--likelihood derivatives are 
\begin{eqnarray*}
\ell^{(1)}_{\theta} =  n\bar{x}/\theta - n, \quad \ell^{(2)}_{\theta} = -  n\bar{x}/\theta^2, \quad \ell^{(3)}_{\theta} = 2n\bar{x}/\theta^3 \quad \ell^{(4)}_{\theta} = -  6 n\bar{x}/\theta^4,
\end{eqnarray*}
where $\bar{x} = \sum_{i=1}^n x_i/n$. Since \smash{$\E_0^n \ell^{(1)}_{\theta_*}= 0$}, it immediately follows that \smash{$\ell^{(1)}_{\theta_*} = O_{P_0^n}(n^{1/2})$}.~In addition, in view of \smash{$\bar{x} - \theta = O_{P_0^n}(n^{-1/2})$}, we have that \smash{$\ell^{(2)}_{\theta_{*}} = O_{P_0^n}(n)$, $\ell^{(3)}_{\theta_{*}} = O_{P_0^n}(n)$~and} $\sup_{\theta \, : \, |\theta- \theta_*|< \delta} \ell^{(4)}_{\theta}  = O_{P_0^n}(n)$ for any fixed $\delta>0$, and also $0<1/(\theta_{*}+ \epsilon) < I_{\theta_{*}}/n = 1/\theta_{*}<1/(\theta_{*}-\epsilon) $ for any $\epsilon>0$ sufficiently small and $J_{\theta_{*}}/n - I_{\theta_{*}}/n = (\bar{x}-\theta_{*})/\theta_*^2 = O_{P_0^n}(n^{-1/2})$. This implies that both Assumptions \ref{cond:1} and \ref{cond:2} are satisfied. Similarly, also Assumption \ref{cond:3} is easily seen to hold in view of the fact that, for every $\theta_{*}>0$, the Gamma density is bounded in a neighborhood of $\theta_{*}$. To ensure the validity of Corollary \ref{corol:1} the last condition to be checked is Assumption \ref{cond:4}. We prove it leveraging Lemma \ref{lemma:cond4}. First note that 
\begin{equation} \label{help:gamma:pois:0}
	\E_{0}^n (\ell(\theta) - \ell(\theta_{*}))/n = \theta_* \log(\theta/\theta_*) -(\theta- \theta_{*}),
\end{equation}
and 
\begin{equation} \label{help:gamma:pois:1}
	(\ell(\theta) - \ell(\theta_{*}))/n - 	\E_{0}^n (\ell(\theta) - \ell(\theta_{*}))/n = (\bar{x} - \theta_*) \log(\theta/\theta_*). 
\end{equation}
The right--hand--side of Equation \eqref{help:gamma:pois:0} is a non--positive function, having maximum at $\theta_{*}$, which is two times differentiable and concave. This implies that Assumption $R1$ of Lemma \ref{lemma:cond4}  is fulfilled. Similarly, since $\log(\theta/\theta_{*})/|\theta - \theta_{*}|$ is bounded for every $0<|\theta- \theta_*|< \delta$,
\begin{equation*}
	\sup\nolimits_{0<|\theta - \theta_{*}|< \delta}(\ell(\theta) - \ell(\theta_{*}))/n - 	\E_{0}^n (\ell(\theta) - \ell(\theta_{*}))/n = O_{P_0^n}(n^{-1/2}),
\end{equation*}
implying that also Assumption $R2$ of Lemma \ref{lemma:cond4}  is fulfilled. Note that these results for the quantities in the right--hand--side of \eqref{help:gamma:pois:0} and \eqref{help:gamma:pois:1} imply also that, for every $\delta>0$, there exists $c_{\delta}>0$ such that 
$P_0^n( \sup_{|\theta-\theta_*|>\delta} (\ell(\theta)- \ell(\theta_{*}))/n < c_{\delta}) \to 1$ as $n\to \infty$. Therefore, Lemma \ref{lemma:cond4} holds for the model under consideration. This concludes the part regarding~Corollary \ref{corol:1}. 

To demonstrate the validity of Theorem \ref{thm:modal}, note that in view of the conjugacy between Gamma prior and Poisson likelihood the \textsc{map} estimator takes the form 
\begin{equation*}
	\hat{\theta} = \frac{\alpha-1}{\beta + n} + \frac{n}{\beta + n} \bar{x}, 
\end{equation*}
for $\alpha + \sum_{i=1}^n x_i>1$. Thus,  $\hat{\theta} - \theta = O_{P_0^n}(n^{-1/2})$ and, as a direct consequence, Assumption \ref{M1} is fulfilled. Finally note that, for every $\delta>0$, the event \smash{$\hat{E}_n = \{ |\hat{\theta} - \theta_*| < \delta\} \cap \{|\bar{x}- \theta_*| < \delta\}$} has probability converging to one as $n \to \infty$. Conditioned on  \smash{$\hat{E}_n$}, in view of $\smash{\log \pi^{(2)}_{\theta}}= -(\alpha-1)/\theta^2$ and of the previously--derived results for the log--likelihoods derivatives, it follows that \smash{$|\ell^{(3)}_{\theta}/n|$, $|\ell^{(4)}_{\theta}/n|$ and $|\log \pi^{(2)}_{\theta} |$} are bounded by positive constants in a sufficiently small neighborhood of $\hat{\theta}$. This last observation implies Assumption \ref{M2} and, therefore, the validity of Theorem \ref{thm:modal} for the Gamma--Poisson model.

\subsection{Exponential model revisited from Section~\ref{sec_321}} Tables~\ref{tab1_supp} and \ref{tab_exp_miss_n}, together with Table~\ref{tab1} in the article, reproduce the same outputs reported in Tables \ref{tab} and \ref{tab01}, but now~with  focus on the practical skew--modal (\textsc{skew--m}) approximation in Section~\ref{sec_31}, rather than its population version which assumes knowledge of $\theta_*$. Consistent with this focus, the performance of the skew--modal approximation in Equation~\eqref{post:theta:map} is compared against the Gaussian  \smash{$\mbox{N}(\hat{\theta}, J_{\hat{\theta}}^{-1})$} arising from the Laplace method  (\textsc{gm})  \citep[see e.g.,][p. 318]{gelman2013bayesian}.

As discussed in detail within  Section~\ref{sec_321}, the remarkable improvements of \textsc{skew--m} in Tables~\ref{tab1_supp}--\ref{tab_exp_miss_n} are in line with those reported for its theoretical \textsc{s--BvM} counterpart~in~Section~\ref{sec_24}. Interestingly, by comparing the results in Tables~\ref{tab1_supp} and \ref{tab_exp_miss_n} with those  in Tables \ref{tab} and \ref{tab01} it is possible to notice that \textsc{skew--m}  approximates even more accurately the target posterior than its theoretical \textsc{s--BvM} counterpart. This is because the practical skew--modal approximation is located, by definition, at the actual \textsc{map} \smash{$\hat{\theta}$} of the target posterior, whereas its theoretical counterpart relies on $\theta_*$. Therefore, in practical implementations,  \textsc{skew--m} is expected to be closer to the actual posterior of interest than its theoretical version, since, in finite samples, there might be a non--negligible difference between $\theta_*$ and the \textsc{map} \smash{$\hat{\theta}$} of  the target posterior to be approximated.

\begin{table}[t]
\renewcommand{\arraystretch}{1}
\centering
\caption{\footnotesize 
For both the correctly--specified and misspecified exponential example, empirical comparison, averaged over $50$ replicated studies, between the  classical Gaussian modal approximation  from the Laplace method (\textsc{gm}) and skew--modal one developed in Section~\ref{sec_31}  (\textsc{skew--m}). The table shows, for different sample sizes from $n=10$ to $n=1500$, the  log--\textsc{tv} distances (\textsc{tv}) and log--approximation errors for the posterior mean  (\textsc{fmae}) under \textsc{gm} and \textsc{skew--m}, respectively. The bold values indicate the best performance for each $n$.  
\vspace{5pt}} 
\small
  \begin{adjustbox}{max width=\textwidth}
\begin{tabular}{lrrrrrr}
  \hline
 \quad & $n=10$  &  $n=50$  &   $n=100$ & $n=500$  &  $n=1000$ &  $n=1500$  \\ 
   \hline
 {\scriptsize \bf Correctly--specified}  &  &  &   &   &   &   \\ 
  	\hline
  $\log \textsc{tv}^n_{\textsc{gm}}$ \quad & $-2.48$ &$-3.28$& $-3.63$& $-4.43$& $-4.78$& $-4.98$ \\ 
  $\log \textsc{tv}^n_{\textsc{skew--m}}$ \quad  &$\bf-3.71$ &$\bf-5.33$& $\bf-6.03$& $\bf-7.65$& $\bf-8.34$& $\bf-8.74$\\
  \hline
  $\log \textsc{fmae}^n_{\textsc{gm}}$ \quad &$-0.61$&$ -1.30$& $-1.63$& $-2.41$& $-2.76$& $-2.96$ \\ 
  $\log \textsc{fmae}^n_{\textsc{skew--m}}$ \quad  &$\bf -1.91$ &$\bf -3.52$& $\bf -4.35$& $\bf -6.50$& $\bf -7.50$& $\bf -8.09$ \\ 
  \hline
   {\scriptsize \bf Misspecified}  &  &  &   &   &   &   \\ 
  \hline
  $\log \textsc{tv}^n_{\textsc{gm}}$ \quad & $-2.48$& $-3.28$& $-3.63$& $-4.43$& $-4.78$& $-4.98$\\ 
  $\log \textsc{tv}^n_{\textsc{skew--m}}$ \quad & $\bf-3.71$& $\bf-5.33$& $\bf-6.03$& $\bf-7.65$ & $\bf-8.35$ &$\bf-8.75$ \\
  \hline
  $\log \textsc{fmae}^n_{\textsc{gm}}$ \quad &$-0.41$& $-1.05$ &$-1.36$& $-2.12$& $-2.46$& $-2.66$\\ 
  $\log \textsc{fmae}^n_{\textsc{skew--m}}$ \quad  &$\bf-1.71 $&$\bf-3.28$& $\bf-4.08$ &$\bf-6.21 $&$\bf-7.21$& $\bf-7.79$  \\ 
  \hline
\end{tabular}
\end{adjustbox}
\label{tab1_supp}
\end{table}

\begin{table}[t]
	\centering
	\caption{\footnotesize  Under the misspecified exponential example, Table~\ref{tab_exp_miss_n} reports, for each $n$ from $n=10$ to $n=50$, the sample size $\bar{n}$ required by the classical Gaussian--modal (\textsc{gm}) approximation from the Laplace method to obtain the same \textsc{tv} and \textsc{fmae} achieved by the proposed skew--modal  approximation (\textsc{skew--m}) with that~$n$.
\vspace{5pt}}
\label{tab_exp_miss_n}
	\begin{tabular}{lrrrrrrr}
		\hline
	 & $ n=10$ & $ n=15$ & $ n=20$ & $ n=25$ & $ n=50$  \\ 
		\hline
			$\bar{n}: \ \textsc{tv}^{\bar{n}}_{\textsc{gm}}=\textsc{tv}^{n}_{\textsc{skew--m}}$    & 150 & 260 & 470 & 730 & $>2500 $ \\ 
	$\bar{n}: \ \textsc{fmae}^{\bar{n}}_{\textsc{gm}}=\textsc{fmae}^{n}_{\textsc{skew--m}} $   & 220 & 450 & 760 & 1120 & $>2500$ \\ 
		\hline
	\end{tabular}
\end{table}

\begin{figure}[b!]
	\centering
		\includegraphics[width=1\textwidth,height=0.46\textwidth]{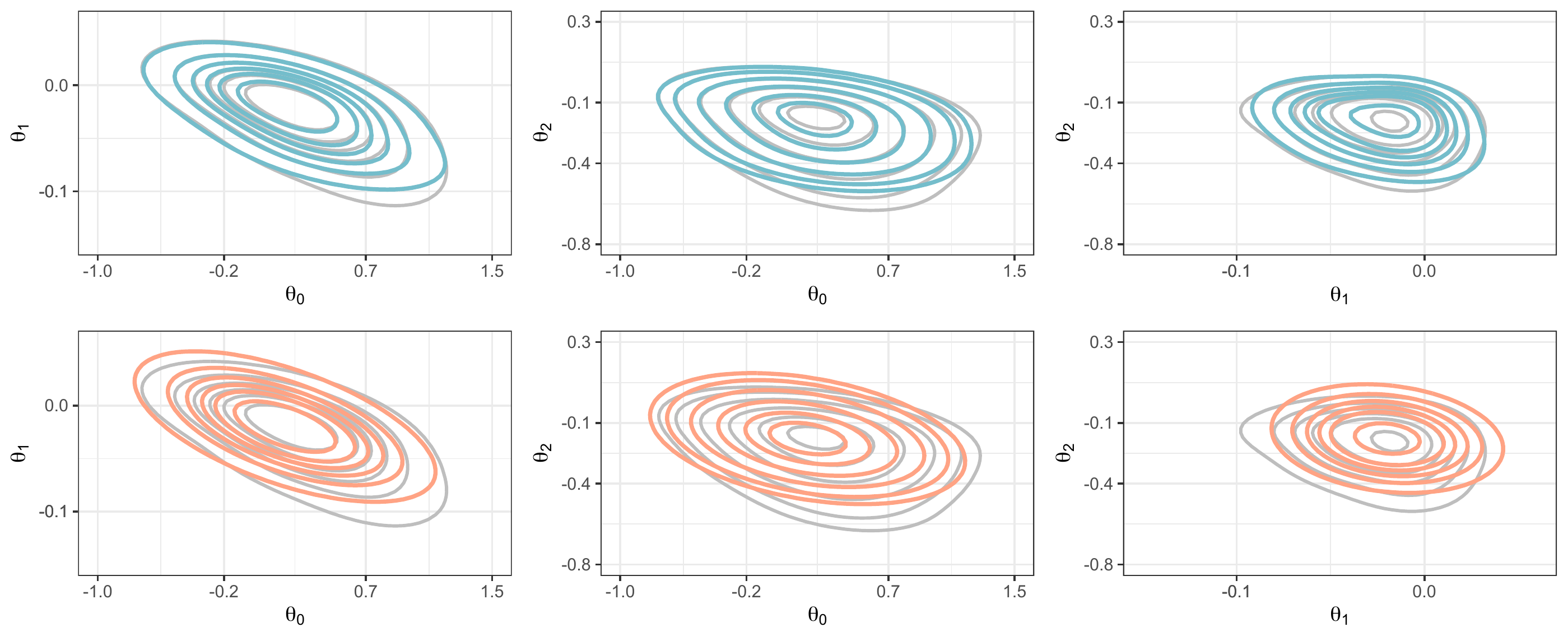}
		\caption{ \footnotesize Visual comparison between skew--modal (blue) and Gaussian (orange) approximations of the target bivariate posteriors (grey) for the three coefficients of the probit regression model in the Cushings application. }	
		\vspace{-10pt}
	\label{fig:probit_sm}
\end{figure}
\begin{figure}[b!]
	\centering
		\includegraphics[width=1\textwidth,height=0.46\textwidth]{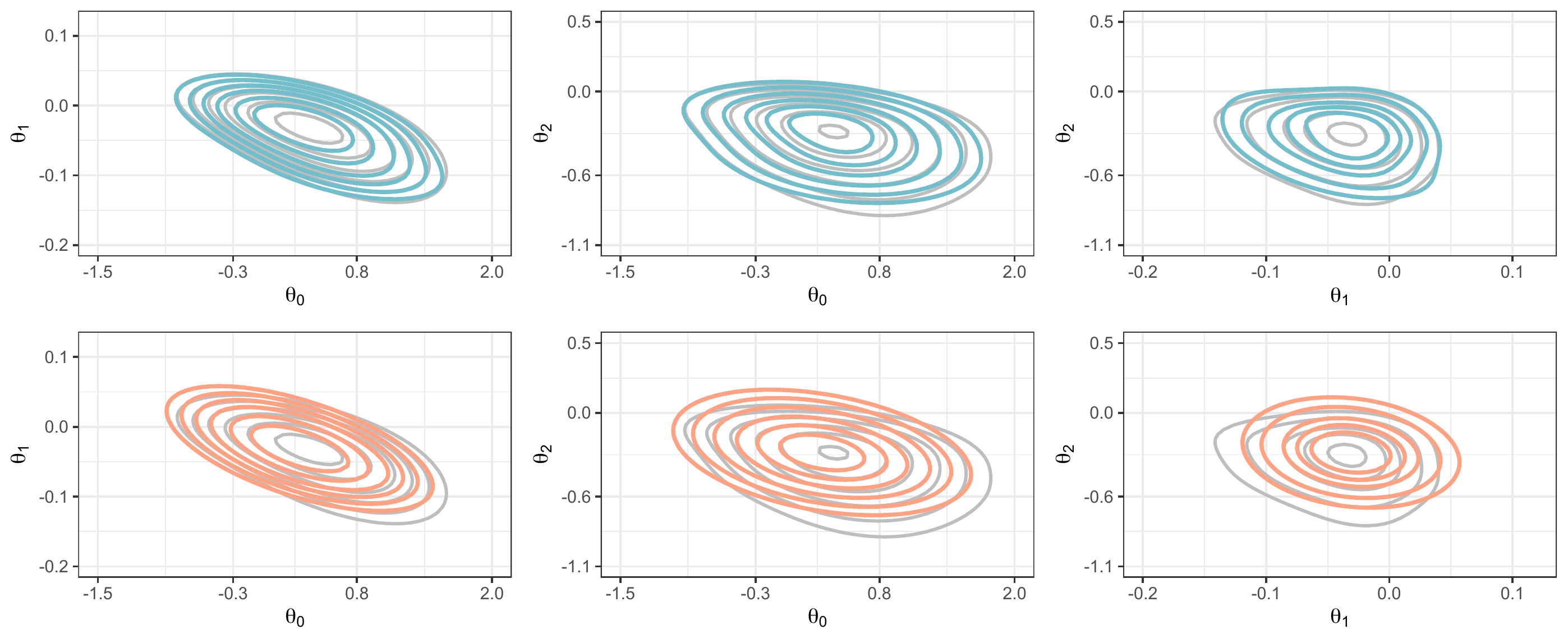}
		\caption{ \footnotesize Visual comparison between skew--modal (blue) and Gaussian (orange) approximations of the target bivariate posteriors (grey) for the three coefficients of the logistic regression model in the Cushings application. }	
		\vspace{-10pt}
	\label{fig:logit_sm}
\end{figure}

\subsection{Probit and logistic regression model from Section \ref{sec_322}}

This section reports some additional results for the real--data analysis described in Section \ref{sec_322}. Figures~\ref{fig:probit_sm} and \ref{fig:logit_sm} strengthen the results within Table~\ref{table:tv} in the main article by providing a graphical comparison between  the accuracy of the newly--developed skew--modal approximation for the bivariate posteriors in the  Cushings dataset, and the one obtained under the classical Gaussian--modal solution. Results confirm again the improved ability of  the proposed skew--modal in approximating the target posterior through an accurate characterization of its skewness.

 \begin{table}[t]
	\renewcommand{\arraystretch}{1}
	\centering
	\caption{\footnotesize For probit and logistic regression, estimated joint, bivariate and marginal total variation distances between the target posterior and the deterministic approximations under analysis in the Cushings application. The bold values indicate the best performance for each subset of parameters.} 
	\small
	\begin{adjustbox}{max width=\textwidth}
		\begin{tabular}{lrrrrrrr}
			\hline
			\quad & \qquad  \qquad $\textsc{tv}_{\theta}$ & $\textsc{tv}_{\theta_{01}}$  &  $\textsc{tv}_{\theta_{02}}$  &   $\textsc{tv}_{\theta_{12}}$ & $\textsc{tv}_{\theta_{0}}$  &  $\textsc{tv}_{\theta_{1}}$ &  $\textsc{tv}_{\theta_{2}}$  \\ 
			\hline
			Probit & &   &  &  &   & & \\ 
			\hline
			\textsc{skew--m} \qquad & $\bf 0.11$ & $\bf 0.05$ & $\bf 0.06$ & $\bf 0.09$ & $0.03$ & $\bf0.04$ & $\bf0.05$  \\ 
			\textsc{gm} \qquad& $0.19$ & $0.10$ & $0.13$ & $0.18$ & $0.09$ & $0.08$ & $0.11$    \\ 
			\textsc{ep} \qquad & $0.13$ & $0.07$ & $0.09$ & $0.11$ & $\bf 0.01$ & $0.07$ & $0.09$  \\ 
			\textsc{mf}--\textsc{vb} \qquad & $0.50$ & $0.32$ & $0.41$ & $0.47$ & $0.18$ & $0.28$ & $0.35$    \\ 
			\textsc{pfm}--\textsc{vb}\qquad & $0.25$ & $0.12$ & $0.22$ & $0.23$ & $0.06$ & $0.09$ & $0.19$    \\ 
			\hline
			Logit & &   &  &  &   & & \\ 
			\hline
			\textsc{skew--m} \qquad & $\bf 0.14$ & $0.08$ & $\bf 0.10$ & $0.13$ & $0.05$ & $ \bf 0.06$ & $\bf 0.07$\\ 
			\textsc{gm} \qquad & $0.23$ & $0.13$ & $0.17$ & $0.22$ & $0.11$ & $0.10$ & $0.14$  \\ 
			\textsc{ep} \qquad& $\bf 0.14$ & $ \bf 0.07$ & $0.11$ & $\bf 0.12$ & $\bf 0.01$ & $0.07$ & $0.10$   \\ 
			\textsc{mf}--\textsc{vb} \qquad& $0.25$ & $0.13$ & $0.21$ & $0.24$ & $0.07$ & $0.10$ & $0.19$    \\ 
			\hline
		\end{tabular}
	\end{adjustbox}
	\label{table:tv:app}
\end{table}

Table \ref{table:tv:app} concludes the analysis by assessing the behavior of the newly--proposed  \textsc{skew--m}~solution when compared against other advanced techniques within the class of deterministic approximations for binary regression models, beyond the classical Gaussian--modal (\textsc{gm}) alternative. State--of--the--art methods under this framework are mean--field variational Bayes (\textsc{mf}--\textsc{vb}) \citep{consonni2007mean,durante2019conditionally} and expectation--propagation (\textsc{ep}) \citep{Chopin_2017}, while partially--factorized variational Bayes   (\textsc{pfm}--\textsc{vb}) \citep{fasanoscalable} is available only for probit regression.  \textsc{mf}--\textsc{vb} and \textsc{pfm}--\textsc{vb} for probit regression leverage the implementation in the GitHub repository \texttt{Probit-PFMVB} \citep{fasanoscalable}, while in the logistic setting we rely on the codes in the repository 
\texttt{logisticVB} \citep{durante2019conditionally}. Note that \textsc{pfm}--\textsc{vb} is designed for probit regression only. Finally, \textsc{ep} is implemented under both models using the code in the GitHub repository \texttt{GaussianEP.jl} by Simon Barthelmé; see also the \texttt{R} library \texttt{EPGLM}  \citep{Chopin_2017} for a previous implementation.

The results in Table \ref{table:tv:app} highlight how the use of \textsc{skew--m} ensures noticeable improvements relative to  \textsc{mf}--\textsc{vb} and \textsc{pfm}--\textsc{vb}. The advantages over \textsc{pfm}--\textsc{vb} in the probit model are remarkable since also such a strategy leverages a skewed approximation of the target posterior distribution. This yields a higher accuracy than  \textsc{mf}--\textsc{vb}, but the improvements are not as noticeable as \textsc{skew--m}. A reason for this result is that  \textsc{pfm}--\textsc{vb}  has been originally developed to provide high accuracy in high--dimensional $p>n$ settings \citep{fasanoscalable}, whereas in this study $p=3$ and $n=27$. 
Remarkably, \textsc{skew--m} yields results competitive also with \textsc{ep}. This fact is noteworthy for at least two reasons. First, Gaussian \textsc{ep} methods are aimed at matching the  first two posterior moments. Being global characteristics of the target posterior, these objectives lead to approximations that, albeit symmetric, are often difficult to improve in practice. On the contrary, the proposed \textsc{skew--m} focuses on the local behavior of the posterior distribution in a neighborhood of its mode. It is therefore interesting to notice how inclusion of skewness can dramatically improve the global quality of an approximation, even when targeting its local behavior at the mode. For the Cushing~application, this translates into an average for the \textsc{tv} distances in Table \ref{table:tv:app} of $0.11$, lower than the average of $0.14$ achieved by \textsc{ep}. Second, \textsc{ep} techniques typically rely on a convenient factorization of the target density \citep[e.g.,][p. 338]{gelman2013bayesian}. Such a condition is not required for the adoption of \textsc{skew--m}, making it applicable to a wider range of models.

\end{appendix}

\end{document}